\documentclass[12pt]{amsart}
\usepackage{amsfonts}
\usepackage{dsfont}
\usepackage{bm}
\usepackage{comment}
\usepackage{bbm}
\usepackage{fourier}

\textwidth=16. true cm
\textheight=24. true cm
\voffset=-2. true cm
\hoffset = -1.5 true cm

\newtheorem{mainthm}{Theorem}

%% This has a default type size 10pt.  Other options are 11pt and 12pt
%% This are set by replacing the command above by
%% \documentclass[11pt]{amsart}
%%
%% or
%%
%% \documentclass[12pt]{amsart}
%%

%%
%% Some mathematical symbols are not included in the basic LaTeX
%% package.  Uncommenting the following makes more commands
%% available. 
%%

%\usepackage{amssymb}

%%
%% The following is commands are used for importing various types of
%% grapics.
%% 

%\usepackage{epsfig}  		% For postscript
%\usepackage{epic,eepic}       % For epic and eepic output from xfig

%%
%% The following is very useful in keeping track of labels while
%% writing.  The variant   \usepackage[notcite]{showkeys}
%% does not show the labels on the \cite commands.
%% 

%\usepackageshowkeys}

%%%%
%%%% The next few commands set up the theorem type environments.
%%%% Here they are set up to be numbered section.number, but this can
%%%% be changed.
%%%%
%\renewcommand{\thesubsection}{\arabic{section}.\arabic{subsection}}
%\newtheorem{thm}{Theorem}[section]
\newtheorem{thm}[equation]{Theorem}
\newtheorem{prop}[equation]{Proposition}
\newtheorem{lemma}[equation]{Lemma}
\newtheorem{cor}[equation]{Corollary}

%%
%% If some other type is need, say conjectures, then it is constructed
%% by editing and uncommenting the following.
%%

%\newtheorem{conj}[thm]{Conjecture} 

%%% 
%%% The following gives definition type environments (which only differ
%%% from theorem type invironmants in the choices of fonts).  The
%%% numbering is still tied to the theorem counter.
%%% 

\theoremstyle{definition}
\newtheorem{defi}[equation]{Definition}

\newtheorem{rem}[equation]{Remark}

%%
%% Again more of these can be added by uncommenting and editing the
%% following. 
%%

%\newtheorem{note}[thm]{Note}

%%% 
%%% The following gives remark type environments (which only differ
%%% from theorem type invironmants in the choices of fonts).  The
%%% numbering is still tied to the theorem counter.
%%% 

\theoremstyle{remark}

%%%
%%% The following, if uncommented, numbers equations within sections.
%%% 
\renewcommand{\theequation}{\thesection.\ifnum\value{subsection}=0 1\else \arabic{subsection}\fi.\arabic{equation}}

%\numberwithin{equation}{subsection}

%%%
%%% The following show how to make definition (also called macros or
%%% abbreviations).  For example to use get a bold face R for use to
%%% name the real numbers the command is \mathbf{R}.  To save typing we
%%% can abbreviate as
\usepackage{comment}

% The real numbers.

%%
%% The comment after the defintion is not required, but if you are
%% working with someone they will likely thank you for explaining your
%% definition.  
%%
%% Now add you own definitions:
%%

%%%
%%% Mathematical operators (things like sin and cos which are used as
%%% functions and have slightly different spacing when typeset than
%%% variables are defined as follows:
%%%

 % The distance.

%%
%% This is the end of the preamble.
%% 

\usepackage[usenames]{color}
\usepackage[table,xcdraw]{xcolor}
\usepackage{tikz}
\usepackage[framemethod=tikz]{mdframed}
\definecolor{aliceblue}{rgb}{0.92, 0.93, 1.0}

\usepackage{mathtools}

\usepackage[colorlinks]{hyperref}

\newcounter{change}

%to turn on comments, delete "disable"
%\usepackage[disable,colorinlistoftodos, textwidth=4cm]{todonotes}
\usepackage[colorinlistoftodos, textwidth=4cm]{todonotes}

\begin{document}

%%
%% The title of the paper goes here.  Edit to your title.
%%

%\title{Besov spaces of distributions as particle systems}
\title{Anisotropic spaces for the bilateral shift}\footnote{ This preprint  has not undergone peer review  or any post-submission improvements or corrections. The Version of Record of this article will be  published in {\it \href{https://link.springer.com/journal/10884}{Journal of Dynamics and Differential Equations}}, and will be  available online at \url{http://dx.doi.org/10.1007/s10884-025-10474-y} }

%%
%% Now edit the following to give your name and address:
%% 

 % Delete if not wanted.

%%
%% If there is another author uncomment and edit the following.
%%

\author{Mateus Marra}
\email{mateus-marra@usp.br}
\author{Daniel Smania}
\email{smania@icmc.usp.br}
\urladdr{https://sites.icmc.usp.br/smania/}

\address{Departamento de Matemática, Instituto de Ciências Matemáticas e de Computação (ICMC), Universidade de São Paulo (USP), Avenida Trabalhador São-carlense, 400,  
São Carlos, SP, CEP 13566-590, Brazil}

%%
%% If there are three of more authors they are added in the obvious
%% way. 
%%

%%%
%%% The following is for the abstract.  The abstract is optional and
%%% if not used just delete, or comment out, the following.
%%%

\begin{abstract} 
Given two H\"older potentials \( \phi_+ \) and \( \psi_- \) for the unilateral shift, we define anisotropic Banach spaces of distributions on the bilateral shift space with a finite alphabet. On these spaces, the transfer operator for the bilateral shift is quasicompact with a spectral gap, and the unique Gibbs state associated with \( \phi_+ \) spans its \( 1 \)-eigenspace. This result allows us to establish exponential decay of correlations 
for H\"older observables and a wide range of measures on the bilateral shift space.
\end{abstract}
\subjclass[2020]{	37A25, 	37A30, 	37A46, 37C30,	37D05,	37D35, 	37D20, 	47B37,  	46E36, 	46F99 }
%%
%%  LaTeX will not make the title for the paper unless told to do so.
%%  This is done by uncommenting the following.
%%
\keywords{Anisotropic space, Besov space, transfer operator, shift, bilateral shift, Gibbs, Gibbs state, decay of correlations, SRB measure, physical measure}

 \maketitle

%%
%% LaTeX can automatically make a table of contents.  This is done by
%% uncommenting the following:
%%

\setcounter{tocdepth}{1}
\tableofcontents

%%
%%  To enter text is easy.  Just type it.  A blank line starts a new
%%  paragraph. 
%%

%%
%% A new section is started as follows:
%%

%Seu resumo aqui, com no máximo 10 páginas

\section{Introduction}

Dynamical systems with hyperbolic behavior often possess a large number of invariant measures. For smooth dynamics, however, particular attention is given to {\it SRB measures} (see Eckmann and Ruelle \cite{er}), which are invariant measures that describe the statistical behavior of most orbits in the Lebesgue sense (or with respect to some volume form on a manifold). This concept is closely related to {\it physical measures}, although the literature varies in definition, often because these two measures coincide.

Much of the initial success in smooth ergodic theory involved studying (sufficiently regular) one-dimensional (piecewise) expanding maps. In these cases, SRB measures are absolutely continuous with respect to Lebesgue measure. For Markovian maps, the problem can be reduced to studying the {\it Gibbs states} of the unilateral shift, where Ruelle’s pioneering work on thermodynamic formalism \cite{ruelle} applies. {\it Transfer operators} acting on function spaces play a crucial role in this area (see Parry and Pollicott \cite{parry1990zeta}).

A similar approach was used for Anosov diffeomorphisms by Ruelle and Bowen \cite{bowen}, employing Markov partitions developed by Sinai. Here again, the problem is reduced to methods involving the transfer operator applied to the unilateral shift. Transfer operator methods have also been highly effective in studying non-Markovian piecewise expanding maps in one or more dimensions. See Baladi \cite{baladi2} and the brief historical overview in \cite{arbieto2019transfer} for further details.

The work of Blank, Keller, and Liverani \cite{bkl} marked the first application of transfer operator methods directly to the transfer operator of an invertible map ($C^\infty$ Anosov diffeomorphism). One of the challenges in this setting is that the SRB measure can be {\it singular} with respect to the manifold’s volume form. Another challenge arises from differing behaviors in distinct directions (expansion/contraction), making conventional norms and function spaces unsuitable. These challenges were addressed by considering the transfer operator on {\it anisotropic spaces of distributions}. An extensive body of results, employing a variety of anisotropic spaces, followed. Baladi’s survey and book \cite{quest}\cite{baladibook} provide a comprehensive account. See also Demers, Kiamari, and Liverani \cite{impa} and Demers \cite{demers}.

Notably, new anisotropic spaces were introduced, which were suitable for studying (piecewise) smooth hyperbolic maps with lower regularity. Of particular note are studies on two-dimensional piecewise hyperbolic maps (Demers and Liverani \cite{dl1}), finite-horizon Sinai billiards (Demers \cite{dbook}), the measure of maximal entropy for piecewise hyperbolic maps \cite{d3}, the time-one map of a Lorentz gas (Demers, Melbourne, and Nicol \cite{d4}),  the measure of maximal entropy for finite-horizon Sinai billiard maps (Baladi and Demers \cite{d5}), and  hyperbolic set with a  isolating neighborhood (Baladi and Tsujii \cite{bt}).

An open question is whether anisotropic Banach spaces can be used in non-smooth settings. This is not immediately clear from previous work, as these approaches often rely on the smooth structure of the manifold and dynamics, such as smooth invariant foliations, Fourier analysis  techniques, microlocal analysis, smooth norms, and restrictions on the dimension/regularity of invariant manifolds.

{\it It turns out they can}. In this work, we demonstrate that it is possible to define anisotropic Banach spaces for a symbolic bilateral space with a finite number of symbols. This approach enables the study of the transfer operator acting on this space, allowing us to establish significant statistical properties of Gibbs states for the bilateral shift.

\section*{Acknowledgements} 

M.M. was supported by CAPES-Brazil. D.S.  was financed  by the S\~ao Paulo Research Foundation (FAPESP), Brasil, Process Number 2017/06463-3, and Bolsa de Produtividade em Pesquisa CNPq-Brazil 311916/2023-6.

\section{Bilateral shift and Gibbs states}
\subsection{Shifts} Let $\mathcal{A}=\{1,\dots,n \}$. For $j\in \mathbb{Z}$ define
$\pi_j\colon \mathcal{A}^\mathbb{Z}\rightarrow \mathcal{A},$
 as $\pi_j((x_k)_{k\in \mathbb{Z}})=x_j.$
Consider the bilateral shift
$\sigma\colon \mathcal{A}^\mathbb{Z}\rightarrow \mathcal{A}^\mathbb{Z}$
defined by $\pi_j(\sigma(x))=\pi_{j+1}(x)$ for all $j\in \mathbb{Z}$.
Denote $I=\mathcal{A}^\mathbb{Z}$.
Consider the unilateral shifts
$\sigma_+\colon \mathcal{A}^\mathbb{N} \rightarrow \mathcal{A}^\mathbb{N}$
$$\sigma_+(x_0,x_1,x_2,x_3,\dots)=(x_1,x_2,x_3,\dots),$$
and $\sigma_-\colon \mathcal{A}^{-\mathbb{N}^\star} \rightarrow \mathcal{A}^{-\mathbb{N}^\star}$
$$\sigma_-(\dots,x_{-3},x_{-2},x_{-1})=(\dots,x_{-3},x_{-2})$$
In particular we can define the inverse branches
\begin{align*}&\sigma_{+,x_{-k},x_{-(k-1)},\dots,x_{-1}}^{-1}(x_0,x_1,x_2,\dots)
=(x_{-k},x_{-(k-1)},\dots,x_{-1},x_0,x_1,x_2,\dots ),\\
&\sigma_{-,x_0.x_1,\dots,x_k}^{-1}(\dots,x_{-3},x_{-2},x_{-1})=(\dots,x_{-2},x_{-1},x_0,x_1,\dots,x_k).
\end{align*}

\subsection{Bilateral shift as a skew-product}
Of course we can identify
$$\mathcal{A}^\mathbb{Z}=\mathcal{A}^{-\mathbb{N}^\star} \times \mathcal{A}^\mathbb{N}.$$
Let $I_+=\mathcal{A}^\mathbb{N}$ and $I_-=\mathcal{A}^{-\mathbb{N}^\star}$. Define the projections
$$\pi_+((x_k)_{k\in \mathbb{Z}})=(x_k)_{k\in \mathbb{N}}, $$
$$\pi_-((x_k)_{k\in \mathbb{Z}})=(x_k)_{k\in -\mathbb{N}^\star}, $$
and see the bilateral shift as a skew-product
$\sigma\colon I_+\times I_-\rightarrow I_+\times I_-$
$$\sigma(x,y)=(\sigma_+(x),\sigma_{-,\pi_0(x)}^{-1}(y)).$$
Moreover $$\sigma^{-1}(x,y)=(\sigma_{+,\pi_{-1}(y)}^{-1}(x),\sigma_-(y)).$$
\subsection{Cylinders and words} Consider the {\it cylinders}
\begin{align*} C(x_{-m},\dots,x_{-1},x_0,x_1,\dots,x_k)&=\{y \in I\colon y_i=x_i \ for \ -m\leq i\leq k     \},\\
C_-(x_{-m},\dots,x_{-1})&=\{y \in I_-\colon y_i=x_i \ for \ -m\leq i\leq 1     \},\\
C_+(x_0,x_1,\dots,x_k)&=\{y \in I_+\colon y_i=x_i \ for \ 0\leq i\leq k     \}.
\end{align*}

The family of all cylinders in $I^+$ ($I^-$) will be denoted by $\mathcal{C}_+$ ($\mathcal{C}_-$, resp.).

A {\it word} is a finite sequence of symbols
$$\omega=x_0 x_1\dots x_\ell$$
with $x_i \in \mathcal{A}$. We denote the {\it length} $\ell+1$  of $\omega$ by $|\omega|$.

\subsection{Topology and metric} We endow $I, I^+$ and $I^-$ with the product topology. The following metric on $I^+$ 
$$d(x,y)=n^{-\min\{i\in \mathbb{N}\colon x_i\neq y_i   \}}.$$
induces the product topology on $I^+$ and it turns $\sigma_+$ into an expanding map. We can define  similar metrics on $I^-$ and $I^+\times I^-$ and we will use $d$ to represent all of them. 
\subsection{Gibbs state for the bilateral shift} Given a $\beta$-H\"older function $\phi\colon I \rightarrow \mathbb{R}$  there is a unique $\sigma$-invariant probability $\nu$ and  $C>0$ such that for every cylinder
$$C_+(x_0,x_1,\dots,x_{k-1})\subset I_+$$
and every $y \in C_+(x_0,x_1,\dots,x_{k-1})\times I_-$ we have
$$\frac{1}{C} \leq \frac{\nu(C_+(x_0,x_1,\dots,x_{k-1})\times I_-)}{\exp(-kP(\phi)+\sum_{i< k} \phi(\sigma^iy))}  \leq C$$
The  probability $\nu$ is called the {\it Gibbs state} associated to the potential $\phi$. Here $P(\phi)$ denotes the topological pressure of $\phi$. Without  loss of generality we assume $P(\phi)=0$.

Our goal is to study the statistical properties of $\nu$ using transfer operator methods associated to the bilateral shift.

\subsection{Gibbs states for unilateral shifts} 
For every H\"older function $\psi_\theta$, with $\theta~\in~\{+,-\}$, defined on $I^\theta$, one can consider the {\it transfer operator}
$$\mathcal{L}_{\psi_\theta,\sigma_\theta}\colon C(I^\theta) \rightarrow C(I^\theta) $$
defined by 
$$\mathcal{L}_{\psi_\theta,\sigma_\theta}h(x) = \sum_{\sigma_\theta(y)=x} e^{\psi_\theta(y)}h(y).$$
Its dual operator has an unique probability $m^{\phi_\theta}_\theta$ satisfying 
$$\mathcal{L}_{\psi_\theta,\sigma_\theta}^\star m^{\psi_\theta}_\theta=e^{P(\psi_\theta)}m^{\psi_\theta}_\theta$$
and there is an unique positive H\"older function $\rho_{\psi_\theta}$ such that 
\begin{align*} \mathcal{L}_{\psi_\theta,\sigma_\theta} \rho_{\psi_\theta}&=e^{P(\psi_\theta)}\rho_{\psi_\theta},\ and \\
\int \rho_{\psi_\theta} \ dm^{\psi_\theta}_s&= 1.
\end{align*} 

Then $\nu_\theta^{\psi_\theta}=\rho_{\psi_\theta} m^{\psi_\theta}_\theta$ is the   Gibbs state for the potential $\psi_\theta$ and the unilateral shift $\sigma_\theta$, that is, a $\sigma_\theta$-invariant probability such that there is $C> 0$ satisfying 

 $$\frac{1}{C} \leq \frac{\nu_\theta^{\psi_\theta}(C_\theta(\omega))}{\exp(-kP(\psi_\theta)+\sum_{i< k} \psi_\theta(\sigma^i_\theta y))}  \leq C$$
for  every cylinder $C_\theta(\omega)\subset I^\theta$ of length $k$ and every $y \in C_\theta(\omega)$. Moreover we have that $\rho_{\psi_\theta}$ is H\"older continuous and there is $C> 0$ such that 
\begin{equation}\label{similar} \frac{1}{C}\leq \rho_{\psi_\theta}(x)\leq C\end{equation} 
for every $x \in I^\theta$. 

\begin{rem}  The Gibbs state  $\mu_\theta^{\psi_\theta}$ is {\it unique and  mixing}.  Furthermore $\nu_\theta^{\psi_\theta}$ is the unique $\sigma_\theta$-invariant probability that is absolutely continuous with respect to $m_\theta$. See Parry and Pollicott \cite{parry1990zeta} for a full account.\end{rem} 

After proper normalization and  without loss of generality we assume that $P(\psi_\theta)=0$  for all potentials in this work. Note that the  reference measure  $m_\theta^{\psi_\theta}$ is such that 
for every cylinder $C_\theta(\omega)\subset I^\theta$ with minimal length one we have

$$m_\theta(\sigma_\theta(C_\theta(\omega)))=\int 1_{\sigma_\theta(C_\theta(\omega))} \ dm_\theta= \int e^{-\psi_\theta} 1_{C_\theta(\omega)}   \ dm_\theta,$$
that is,  $e^{-\psi_\theta}$ is the {\it jacobian} of the measure $m_\theta$ with respect to the injective branches of  $\sigma_\theta$.

\subsection{Constructing Gibbs states}   Firstly we can find  H\"older potentials $\phi_+$ and $\phi_-$ that are {\it cohomologous}  to $\phi$ and only depends on the non-negative (negative, respectively)  entries of the sequences, that is, there is a H\"older function $u_+$, $u_-$ such that 
\begin{equation}\label{coho} \phi_+ = \phi + u_+\circ \sigma - u_+\end{equation} 
$$\phi_- = \phi + u_-\circ \sigma - u_-$$
and \begin{itemize}
    \item[-] $\phi_+(x)=\phi_+(y)$ whenever $\pi_+(x)=\pi_+(y)$.
    \item[-] $\phi_-(x)=\phi_-(y)$ whenever $\pi_-(x)=\pi_-(y)$.
\end{itemize}
See Parry and Pollicott \cite{parry1990zeta} for a  proof of the existence of $\phi_+$ and $\phi_-$. Since $P(\phi)=0$ we have $P(\phi_+)=P(\phi_-)=0$. We can easily see that the Gibbs states of $\phi$ and $\phi_+$  are the same. \textit{Note  that every Gibbs state \(\nu\) of \(\sigma\) with respect to \(\phi\) must necessarily be a \(\sigma\)-invariant measure with marginal \(\pi_+^*(\nu) = \nu_+^{\phi_+}\).}

For details of the (very short) classical construction of the  Gibbs state using $\nu_+^{\phi_+}$ see Bowen \cite{bowen}). The   uniqueness of the Gibbs state $\nu$ follows from the fact that the invariant measure $\nu$ satisfies $$\nu(C_+(\omega)\times I^-)= \nu^{\phi_+}_+(C_+(\omega)),$$ that allows us to univocally deduce the measure of every other cylinder in $I^+\times I^-$. 

Se also  Climenhaga, Pesin,  Zelerowicz \cite{cpz}  for a detailed historical account on Gibbs measures and the many distinct ways to study them.

\subsection{Product structure of Gibbs states and SRB measures} The section will not be used to obtain the main results, so the reader can skip it if she/he  wish so. However we think it is useful to motivate our results. In particular we show that  Gibbs states have a product structure (see Haydn \cite{Haydn} and Leplaideur \cite{Leplaideur}). 

\begin{defi} Let  $\mu$ be a Borelian  probability  on $I^+\times I^-$. Let $\Omega_{\nu,\mu}^+$ be the set  of points $x \in I^+\times I^-$ such that  
$$\lim_k \frac{1}{k} \sum_{i< k} \delta_{\sigma^ix} =\nu$$
in the weak-$\star$ topology in  the dual of the continuous functions on $I^+\times I^-$.
We say that the Gibbs state $\nu$ is a   {\bf 
 forward SRB measure} with respect to  $\mu$ if $\mu(\Omega_{\nu,\mu}^+)> 0$.  In an analogous way 
let $\Omega_{\nu,\mu}^-$ be the set  of points $x \in I^+\times I^-$ such that  
$$\lim_k \frac{1}{k} \sum_{i< k} \delta_{\sigma^{-i}x} =\nu$$
 in the weak-$\star$ topology. We say that the Gibbs state $\nu$ is a   {\bf 
 backward  SRB measure} with respect to  $\mu$ if $\mu(\Omega_{\nu,\mu}^-)> 0$.
\end{defi}

We defer the proof of the next proposition to the Appendix.

\begin{prop} \label{srbp}
Let $\psi_+\colon I^+\rightarrow \mathbb{R}$ and $\psi_-\colon I^-\rightarrow \mathbb{R}$ be 
H\"older continuous functions with zero  topological pressure. Let $m=m_+^{\psi_+}\times m_-^{\psi_-}$. 
\begin{itemize}
\item[A.] If $\psi_+$ is not cohomologous to $\phi$ then $\nu$ is singular with respect to $m$ and moreover $\nu$ {\it is not } a forward SRB measure with respect to $m$.
\item[B.] If $\psi_+$ is cohomologous to $\phi$  but $\psi_-$ is not cohomologous to $\phi$ then $\nu$ is singular with respect to $m$, $\nu$  is  a forward SRB measure with respect to $m$ with $m(\Omega_{\nu,m}^+)=1$, but $\nu$  is not  a backward  SRB measure with respect to $m$.
\item[C.]  If $\psi_+$ and $\psi_-$ are  cohomologous to $\phi$  then  $\nu$ is absolutely  continuous with respect to  $m$ and $\nu$  is  a forward and backward  SRB measure with respect to $m$ with $m(\Omega_{\nu,m}^+)=m(\Omega_{\nu,m}^-)=1$.
\end{itemize}
Recall that $\phi$, $\phi_+$ and $\phi_-$ are cohomologous to each other, so $m_+^{\phi_+}\times m_-^{\phi_-}$ is in case $C$. 
\end{prop}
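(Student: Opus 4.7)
The idea is to project everything onto the unilateral spaces via $\pi_\pm$ and reduce to standard unilateral Gibbs theory. The main tools are the identity $\pi_+^*\nu=\nu_+^{\phi_+}$ from the text, the symmetric statement $\pi_-^*\nu=\nu_-^{\phi_-}$ (obtained by applying the same discussion to $\sigma^{-1}$ and the cohomologous potential $\phi_-$), the equivalence of $\nu_\theta^{\psi_\theta}$ and $m_\theta^{\psi_\theta}$ via the bounded density $\rho_{\psi_\theta}$, and two classical unilateral facts: Gibbs states of cohomologous H\"older potentials coincide, while Gibbs states of non-cohomologous potentials are mutually singular.

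The singularity statements in (A) and (B) are then immediate: if $\psi_+\not\sim\phi$ then $\pi_+^*\nu=\nu_+^{\phi_+}$ and $\pi_+^*m=m_+^{\psi_+}\sim\nu_+^{\psi_+}$ are mutually singular on $I^+$, and pulling a witnessing set back through $\pi_+$ yields $\nu\perp m$; (B) follows identically with $\pi_-$. For the SRB analysis I would use the commutation $\pi_+\circ\sigma^i=\sigma_+^i\circ\pi_+$. Given a cylinder $C=C(x_{-m},\dots,x_k)\subset I$, once $i\ge m+1$ the indicator $1_C(\sigma^i z)$ depends only on $\pi_+(z)$ and equals $1_{\tilde C}(\sigma_+^{i-m}\pi_+(z))$ for a suitable cylinder $\tilde C\subset I^+$. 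Birkhoff's theorem for the mixing probability $\nu_+^{\psi_+}\sim m_+^{\psi_+}$ gives convergence, for $m$-a.e.\ $z$, of the forward averages to $\nu_+^{\psi_+}(\tilde C)$; by $\sigma$-invariance of $\nu$ one has $\nu(C)=\nu_+^{\phi_+}(\tilde C)$, so the Birkhoff limits match $\nu(C)$ on every cylinder iff $\nu_+^{\psi_+}=\nu_+^{\phi_+}$, iff $\psi_+\sim\phi$. A density and approximation argument extends this from cylinder indicators to general $f\in C(I)$; the backward SRB claims follow from the mirror argument using $\sigma^{-1}$, $\sigma_-$, and $\pi_-\circ\sigma^{-1}=\sigma_-\circ\pi_-$. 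Together these yield (A), (B), and the SRB portion of (C).

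The hardest step is the absolute continuity in (C). Because $\psi_\pm\sim\phi_\pm$ makes $m_\pm^{\psi_\pm}\sim m_\pm^{\phi_\pm}$ (with bounded density coming from the coboundary between $\psi_\pm$ and $\phi_\pm$), it suffices to prove $\nu\sim m_+^{\phi_+}\times m_-^{\phi_-}$. I would do so on cylinders $C=C(x_{-m},\dots,x_k)$ using the Gibbs estimates
\begin{gather*}
\nu(C)\asymp\exp\!\Bigl(\textstyle\sum_{j=-m}^{k}\phi(\sigma^j z)\Bigr),\\
m_+^{\phi_+}(C_+)\asymp\exp\!\Bigl(\textstyle\sum_{j=0}^{k}\phi_+(\sigma^j z)\Bigr),\quad m_-^{\phi_-}(C_-)\asymp\exp\!\Bigl(\textstyle\sum_{j=-m}^{-1}\phi_-(\sigma^j z)\Bigr),
\end{gather*}
valid uniformly in $z\in C$ and in the cylinder. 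Using \eqref{coho} and telescoping, the differences $\sum_{j=0}^{k}(\phi-\phi_+)\circ\sigma^j$ and $\sum_{j=-m}^{-1}(\phi-\phi_-)\circ\sigma^j$ each collapse to boundary values of $u_\pm$, which are uniformly bounded. Hence $\nu(C)\asymp m(C)$ with a constant independent of the cylinder, and $\nu\sim m$ follows. The principal obstacle is controlling the boundary contributions---both from cohomology telescoping and from the choice of base point $z$ inside each Gibbs estimate---so that the implicit constants do not degenerate as $m,k\to\infty$; this uses the uniform bounds on $u_\pm$ and H\"older distortion estimates for Birkhoff sums of $\phi$, $\phi_+$, $\phi_-$.
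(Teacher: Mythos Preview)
Your proposal is correct and follows essentially the same route as the paper: reduce forward/backward SRB questions to Birkhoff's theorem for the unilateral Gibbs states via $\pi_\pm\circ\sigma^{\pm i}=\sigma_\pm^i\circ\pi_\pm$, and prove (C) by showing $\nu(C)\asymp m_+^{\phi_+}(C_+)\,m_-^{\phi_-}(C_-)$ through cohomology telescoping.  The one organizational difference is that the paper proves (C) first and then derives the singularity statements in (A) and (B) from the equivalence $\nu\sim m_+^{\phi_+}\times m_-^{\phi_-}$, whereas you obtain singularity directly from the marginals $\pi_\pm^*\nu$ versus $\pi_\pm^*m$; your order is slightly cleaner since it makes (A) and (B) independent of (C), but the content is the same.
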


\subsection{Do you want to understand the past or the future? You need to choose} 
To apply the transfer operator approach in the study of the Gibbs measure \( \nu \), it is necessary to identify a \textbf{reference measure} \( m \) on \( I^+ \times I^- \) that is \textbf{almost invariant} under the push-forward action of the bilateral shift \( \sigma \).

The product measures \( m = m_+^{\psi_+} \times m_-^{\psi_-} \) in Proposition \ref{srbp} are quasi-invariant, and, as we will see, the associated transfer operator for these measures has a relatively simple structure. This structure facilitates nearly independent analysis along the unstable and stable "directions," \( I^+ \) and \( I^- \), making these measures well-suited for our study.

However, Proposition \ref{srbp}.A implies that if \( \psi_+ \) is not cohomologous to \( \phi \), then \( \nu \) provides little, if any, information about the \textbf{future} behavior of a typical point with respect to \( m = m_+^{\psi_+} \times m_-^{\psi_-} \). Therefore, {\it if we are concerned with future behavior, as we assume here, essentially our only choice is} to set \( \psi_+ = \phi_+ \).

The product structure of \( \nu \) described in Proposition \ref{srbp}.C makes the measure \( m = m_+^{\phi_+} \times m_-^{\phi_-} \) an appealing choice for the reference measure, as \( \nu \) then captures both the past and future behavior of \( m \)-typical points.

On the other hand, Proposition \ref{srbp}.B indicates that the Gibbs states \( \nu \) provide insights into the {\it future} dynamics of \( \sigma \) with respect to more general reference measures, though not necessarily about past dynamics. Hence, {\it if we are interested \textit{\textbf{only}} in the future, as we assume here}, we may adopt a more flexible approach for the stable direction \( I^- \) by choosing an \textbf{arbitrary} Hölder potential \( \psi_- \) with vanishing topological pressure and setting \( m = m_+^{\phi_+} \times m_-^{\psi_-} \). This flexibility allows us to derive statistical properties for these measures, such as the exponential decay of correlations.

\section{Main results: Spectral gap and Exponential decay of correlations}

Let $m_+=m_+^{\phi^+}$.

\begin{mainthm}[Spectral Gap for Anisotropic Banach Spaces]\label{main0}
Let $m_-=m_-^{\psi_-}$ be a reference measure associated with an arbitrary H\"older potential $\psi_-$ for $\sigma_-$, with zero topological pressure. For \  $0 < s < t< 1$, we use the reference measure $m = m_+ \times m_-$ to define:
\begin{itemize}
\item[i.] A metric $d_+$ on $I^+$ that is H\"older equivalent to the usual metric on $I^+$.
\item[ii.] A metric $d_-$ on $I^-$ that is H\"older equivalent to the usual metric on $I^-$.
\item[iii.] A space of functions $B^s_{1,1}(I^+)$, analogous to the usual Besov space with corresponding parameters, which is continuously embedded  in $L^r(m_+)$ for $r$ satisfying 
$$s - 1 + \frac{1}{r} = 0.$$
Additionally, we have $C^\beta(I^+,d_+) \subset B^s_{1,1}(I^+)$ for every $\beta > s$, where $C^\beta(I^+,d_+)$ denotes the space of H\"older continuous functions with respect to the metric $d_+$ but it  also includes certain discontinuous and unbounded functions.
\item[iv.] A space of distributions $B^{-t}_{1,1}(I^-)$, analogous to the usual distributional Besov space with corresponding parameters, which satisfies
$$(B^{-t}_{1,1}(I^-))^\star = C^t(I^-,d_-).$$
\item[v.] An anisotropic Banach space of distributions
$$B^{s,-t} = B^{s}_{1,1}(I^+) \otimes B^{-t}_{1,1}(I^-)$$
with the following properties:
\begin{itemize}
\item[-] If $f \in B^{s}_{1,1}(I^+)$ and $g\in B^{-t}_{1,1}(I^-)$ then $fg\in B^{s}_{1,1}(I^+) \otimes B^{-t}_{1,1}(I^-)$.
\item[-] The space generated by measures that are absolutely continuous with respect to $m$ and have densities given by characteristic functions of cylinders on $I^+ \times I^-$ is dense in $B^{s,-t}$.
\item[-] Every Borelian function  $\rho$ on $I^+ \times I^-$ such that there is $C$ such that  for every $x, x_0\in  I^+$ and $y,y_0 \in I^-$ we have
\begin{align*} &|\rho(x,y)-\rho(x_0,y)|\leq Cd_+(x,x_0)^{2t},\\
&|\rho(x,y)-\rho(x,y_0)|\leq Cd_-(y,y_0)^{2t}
\end{align*} 
is a bounded multiplier in $B^{s,-t}$.
\item[-] Let $r^\star$ be such that 
$$\frac{1}{r}+\frac{1}{r^\star}=1.$$
The distributions $\mu \in B^{s,-t}$ can be evaluated on every Borel measurable function $\gamma\colon I^+ \times I^- \to \mathbb{C}$ such that there is $K_{s,t}(\gamma)$ satisfying 
\begin{enumerate}
\item[-] We have 
$$||\gamma(\cdot,y)||_{L^{r^\star}(I^+)}\leq K_{s,t} (\gamma)$$
\item[-] We have
$$||\gamma(\cdot,y)- \gamma(\cdot,y_0)||_{L^{r^\star}(I^+)}\leq K_{s,t}(\gamma) d_-(y,y_0)^t.$$
for every $y,y_0\in I^-$ and $x\in I^+$.
\end{enumerate}
\end{itemize}
\item[vi.] The measure $m = m_+ \times m_-$ is $\sigma$-quasi-invariant, so its associated transfer operator
$$\mathcal{L}\colon L^1(m) \to L^1(m)$$
is well-defined.
\end{itemize}

There are $s^+$ and $t^-$ such that  if  \ $ 0 < s < t< 1$, $s < s^+$ and $t < t^-$  then:
\begin{itemize}
\item[A.] The intersection $L^1(m) \cap B^{s,-t}$ is $\mathcal{L}$-invariant and dense in both $L^1(m)$ and $B^{s,-t}$. Moreover, $\mathcal{L}$ extends uniquely to a bounded operator on $B^{s,-t}$.
\item[B.] For $\mu \in B^{s,-t}$, if $\mu$ is a Borel regular measure on $I^+ \times I^-$, then $\mathcal{L}\mu = \sigma^\star\mu$.
\item[C.] $\mathcal{L}$ is a bounded operator on $B^{s,-t}$ with a spectral radius of $1$.
\item[D.] $\mathcal{L}$ is a quasi-compact operator on $B^{s,-t}$, and its essential spectral radius satisfies
$$r_e(\mathcal{L}) \leq \max \{\exp(s M_+(\phi_+)), \exp(t M_-(\psi_-))\} < 1,$$
where
\begin{align*}
M_+(\phi_+) &= \sup_{\mu \text{ $\sigma_+$-invariant prob}} \int \phi_+ \, d\mu, \\
M_-(\psi_-) &= \sup_{\mu \text{ $\sigma_-$-invariant prob. }} \int \psi_- \, d\mu.
\end{align*}
\item[E.] $1$ is the only element of the spectrum of $\mathcal{L}$ on $B^{s,-t}$ lying on the unit circle. The generalized $1$-eigenspace of $\mathcal{L}$ is one-dimensional and is generated by the unique Gibbs state of $\phi_+$.
\item[F.] There is $C$ and $\lambda \in (0,1)$ such that for every $\mu \in B^{s,-t}$, functions $\rho$ and $\gamma$ as in (v) and for every $k\geq 0$
$$\bigg|\langle \mathcal{L}^k( \rho\mu),\gamma\rangle - \langle \mu, \rho \rangle   \langle  \nu,\gamma\rangle  \bigg|\leq C\lambda^k K_{s,t}(\gamma)||\mu ||_{B^{s,t}}$$
\end{itemize}
\end{mainthm}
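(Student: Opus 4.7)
The plan is to follow a Lasota--Yorke plus Hennion strategy, adapted to the anisotropic tensor-product space $B^{s,-t}$, and then exploit the projection onto the unstable factor $I^+$ to identify the peripheral spectrum.

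For Parts A and B, I would work with the dense subspace $\mathcal{D}\subset L^1(m)\cap B^{s,-t}$ of finite linear combinations of measures with densities $1_{C_+(\omega_+)\times C_-(\omega_-)}$. Using the $\sigma$-quasi-invariance of $m$ one computes $\sigma^\star(1_C\,m)$ explicitly as a finite sum of such cylinder densities with $e^{\phi_+}$-type weights, showing $\mathcal{L}\mathcal{D}\subset\mathcal{D}$ and giving Part B on $\mathcal{D}$; the extension to all Borel-regular measures in $B^{s,-t}$ follows by the density assertion of item (v). A preliminary a priori bound $\|\mathcal{L}\mu\|_{B^{s,-t}}\leq C\|\mu\|_{B^{s,-t}}$ on $\mathcal{D}$ then extends $\mathcal{L}$ uniquely to a bounded operator on $B^{s,-t}$.

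The heart of Parts C and D is a Lasota--Yorke inequality
\[
\|\mathcal{L}^k\mu\|_{B^{s,-t}}\;\leq\;C\,\max\!\left\{e^{ksM_+(\phi_+)},\;e^{ktM_-(\psi_-)}\right\}\|\mu\|_{B^{s,-t}}\,+\,C_k\,\|\mu\|_w,
\]
where $\|\cdot\|_w$ is a weak norm giving compact embedding. The crucial decomposition exploits the skew-product formula
\[
\sigma^k(x,y)=\bigl(\sigma_+^k(x),\,\sigma_{-,\pi_0(x),\dots,\pi_{k-1}(x)}^{-1}(y)\bigr).
\]
Iterating, $\mathcal{L}^k$ acts on the $I^+$ factor as the Ruelle transfer operator for $\sigma_+^k$ with potential $\phi_+$, whose norm on $B^s_{1,1}(I^+)$ is controlled by $e^{ksM_+(\phi_+)}$ by an analogue of the classical Besov Lasota--Yorke estimate; on the $I^-$ factor it acts by composition with inverse branches of $\sigma_-^k$, whose dual action on $B^{-t}_{1,1}(I^-)$ is bounded by $e^{ktM_-(\psi_-)}$. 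Combining on the tensor product yields the estimate, and Hennion's theorem gives quasi-compactness together with the desired essential spectral radius bound. Spectral radius equals $1$ because $\mathcal{L}\nu=\nu$ forces $r(\mathcal{L})\geq 1$, while $\|\mathcal{L}\|_{L^1(m)}=1$ combined with a density/interpolation argument forces $r(\mathcal{L})\leq 1$.

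For Part E, quasi-compactness reduces the peripheral spectrum to finitely many eigenvalues with finite-dimensional generalized eigenspaces. If $\mathcal{L}\mu=\lambda\mu$ with $|\lambda|=1$, the marginal $\pi_+^\star\mu$ satisfies $\mathcal{L}_{\phi_+,\sigma_+}(\pi_+^\star\mu)=\lambda\,\pi_+^\star\mu$; classical mixing of the Gibbs state for $\sigma_+$ forces $\lambda=1$ and $\pi_+^\star\mu$ proportional to $\nu_+^{\phi_+}$, and the uniqueness of the bilateral Gibbs state with that marginal (recalled in Section 2) forces $\mu\propto\nu$. An analogous marginal argument rules out Jordan blocks. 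Part F then follows from the spectral decomposition $\mathcal{L}^k=P+R^k$ with $P\mu=\langle\mu,1\rangle\,\nu$ and $\|R^k\|_{B^{s,-t}}\leq C\lambda^k$: writing
\[
\langle\mathcal{L}^k(\rho\mu),\gamma\rangle-\langle\mu,\rho\rangle\langle\nu,\gamma\rangle=\langle R^k(\rho\mu),\gamma\rangle,
\]
one bounds $\|\rho\mu\|_{B^{s,-t}}\leq C\|\mu\|_{B^{s,-t}}$ via the multiplier property of $\rho$ in item (v), and bounds the pairing with $\gamma$ by $K_{s,t}(\gamma)$ via the duality property in (v). The main obstacle will be the Lasota--Yorke estimate: the skew product couples the $I^-$-inverse branches to the base point $x\in I^+$, so $\mathcal{L}$ is \emph{not} a clean tensor product of an expanding transfer operator with a composition operator. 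Decoupling the two rates—obtaining the maximum of the exponential bounds rather than their product or a sum—requires carefully leveraging the tensor-product definition of the $B^{s,-t}$-norm together with the Hölder regularity of the cocycle $x\mapsto\sigma_{-,\pi_0(x),\dots,\pi_{k-1}(x)}^{-1}$, and this is where all the anisotropic technology has to do real work.
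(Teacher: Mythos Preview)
Your overall architecture---quasi-compactness, then identification of the peripheral spectrum, then spectral decomposition for Part~F---matches the paper's, but the technical implementation differs and your Part~E argument has a real gap.

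\textbf{Quasi-compactness.} The paper does not prove a Lasota--Yorke inequality with a weak norm and does not invoke Hennion. Instead it works directly in the unconditional Haar basis $\{a_Q^+ b_J^-\}$ of $B^{s,-t}$ and estimates $\|\mathcal{L}^\ell(a_Q^+ b_J^-)\|_{B^{s,-t}}$ atom by atom. The decoupling you correctly flag as the main obstacle is resolved not through H\"older regularity of the cocycle but by a simpler observation: each atom $a_Q^+$ is supported in a single cylinder $C_+(x_0,\dots,x_{\ell-1})$, and on that cylinder the first $\ell$ symbols of $x$ are \emph{constant}, so the stable branch $\sigma_{-,\pi_0(x),\dots,\pi_{\ell-1}(x)}^{-1}$ does not depend on $x$ at all. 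Hence $\mathcal{L}^\ell(a^+ b^-)=A^+(x)B^-(y)$ is an \emph{exact} product (Proposition~\ref{prod2}, Corollary~\ref{prod}), and one estimates the two factors separately in $B^s_{1,1}(I^+)$ and $B^{-t}_{1,1}(I^-)$. The essential spectral radius bound then comes from Nussbaum's formula: the atoms with $Q=I^-$ and $J$ of level $<\ell$ contribute a finite-rank piece, while the remaining atoms give the contraction rates $e^{sM_+}$ and $e^{tM_-}$. Your Lasota--Yorke route might be made to work, but you would have to manufacture a compactly-embedded weak norm that the paper never needs; the atomic approach also yields $\sup_k\|\mathcal{L}^k\|<\infty$ directly, giving spectral radius $\leq 1$ without any $L^1$/interpolation argument.

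\textbf{Peripheral spectrum.} Your marginal argument has a gap. If $\mathcal{L}\mu=\lambda\mu$ with $|\lambda|=1$, $\lambda\neq 1$, the eigenvalue equation for $\pi_+^\star\mu$ only forces $\pi_+^\star\mu=0$, not $\mu=0$: a distribution in $B^{s,-t}$ with vanishing $I^+$-marginal need not vanish. Likewise, for $\lambda=1$, knowing $\pi_+^\star\mu\propto\nu_+^{\phi_+}$ pins down $\mu$ only if $\mu$ is already known to be an invariant \emph{measure}; the uniqueness statement from Section~2 is for measures, not distributions. The paper closes this gap via a Blank--Keller--Liverani argument (Claim~B of Proposition~\ref{exp}): writing $\pi_\lambda$ as a Ces\`aro limit on test-function densities and using the $L^1$ duality, one shows every peripheral eigenvector is in fact a complex Borel measure absolutely continuous with respect to $\pi_1(1_{I^+\times I^-})$. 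Only after this step does the mixing of $\nu_+^{\phi_+}$ (applied to $\langle\mathcal{L}^k(a_Q^+ b_J^-),1_{C_+(\omega)\times I^-}\rangle$) finish the identification.
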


We postpone the definition of $B^{s,-t}$ and technical details in the statement to Section \ref{ba}. Theorem \ref{main0} and additional results on the spaces $B^{s,-t}$ allows us to study statistical properties of the Gibbs state $\nu$. 

Theorem \ref{main0}.F is a decay of correlation result, however some measures are relatively {\it tame} elements on $B_{s,-t}$, so we can consider more general functions $\rho$ in this case.

Theorem \ref{main0} can be done in a similar way to a subshift of finite type with usual transitivity assumptions. We prefer to prove it to  the full shift so simplify the notation.

\begin{mainthm}[Exponential Decay of Correlations I] \label{main1} There are $s^+$ and $t^-$ such that  if  \ $ 0 < s < t< 1$, $s < s^+$ and $t < t^-$ then there is $C> 0$ and $\lambda < 1$ such that the following holds. Given an {\bf arbitrary} Borel probability  $\mu$ on $I^-$, and   functions $\rho, \gamma$ on $I^+\times I^-$ such that 
\begin{itemize}
\item[A.] The function $\rho$ is  Borel measurable  functions be such that there exists  \( C \) satisfying
$$ \int \Vert \rho(\cdot,y) \Vert_{B^{s}_{1,1}} \ d\mu(y) < \infty.$$ 
\item[B.] The function $\gamma\colon I^+\times I^-\rightarrow \mathbb{C}$ satisfies the assumptions in Theorem \ref{main0}.v
\end{itemize}
then $$\rho\  m_+\!\!\times \mu \in B^{s}_{1,1}(I^+)\otimes B^{-t}_{1,1}(I^-)$$ and  for every $k\geq 0$
$$\bigg|\int \gamma\circ \sigma^k \rho  \ d (m_+\!\!\times \mu) - \int \rho d (m_+\!\!\times \mu)\int \gamma \ d\nu     \bigg|\leq C\lambda^k K_{s,t}(\gamma)\int \Vert \rho(\cdot,y) \Vert_{B^{s}_{1,1}} d\mu(y)$$
\end{mainthm}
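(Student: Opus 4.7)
The plan is to reduce Theorem \ref{main1} to the general decay-of-correlations estimate in Theorem \ref{main0}.F by proving that the signed measure
$$\eta := \rho \cdot (m_+\!\!\times \mu)$$
belongs to $B^{s,-t}$ and satisfies $\|\eta\|_{B^{s,-t}} \leq C\int \|\rho(\cdot,y)\|_{B^s_{1,1}}\, d\mu(y)$. Once this is achieved, I can apply Theorem \ref{main0}.F with the choices ``multiplier'' equal to the constant $1$ and ``base distribution'' equal to $\eta$, so the right-hand side $\langle \nu, \gamma\rangle$ times the leading coefficient will give exactly $\int\rho\, d(m_+\!\!\times\mu)\cdot\int \gamma\, d\nu$, and the bound will match the one asserted.

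First, for each fixed $y\in I^-$, the hypothesis guarantees $\rho(\cdot,y)\in B^{s}_{1,1}(I^+)$, so $\rho(\cdot,y)\,m_+$ is a bona fide element of $B^{s}_{1,1}(I^+)$ of norm $\|\rho(\cdot,y)\|_{B^s_{1,1}}$. Second, I need that each Dirac $\delta_y$ lies in $B^{-t}_{1,1}(I^-)$ with norm uniformly bounded in $y$; this uses the concrete atomic/cylinder description of $B^{-t}_{1,1}(I^-)$ that the paper sets up (the duality $(B^{-t}_{1,1})^\star = C^t(I^-,d_-)$ immediately gives $|\langle\delta_y,f\rangle|\leq \|f\|_{C^t}$, and shift-invariance of the construction should promote this to actual membership with uniform norm). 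Combined, the pure-tensor slices $\rho(\cdot,y)\,m_+\otimes \delta_y$ live in $B^{s,-t}$ with norm at most $C\|\rho(\cdot,y)\|_{B^s_{1,1}}$.

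Next, the integrability assumption $\int \|\rho(\cdot,y)\|_{B^s_{1,1}}\,d\mu(y)<\infty$ makes the $B^{s,-t}$-valued map $y\mapsto \rho(\cdot,y)\,m_+\otimes\delta_y$ Bochner integrable with respect to $\mu$, so
$$\eta = \int_{I^-}\rho(\cdot,y)\,m_+\otimes \delta_y\,d\mu(y)\in B^{s,-t},$$
with the desired norm bound. Identifying this Bochner integral with the measure $\rho\cdot(m_+\!\!\times\mu)$ is done by testing against a dense family of Hölder tensors $f(x)g(y)$ via Fubini. With $\eta$ now in $B^{s,-t}$, Theorem \ref{main0}.B gives $\mathcal{L}^k\eta = \sigma_\star^k\eta$ on measures, so $\langle \mathcal{L}^k\eta,\gamma\rangle = \int \gamma\circ\sigma^k\,\rho\, d(m_+\!\!\times\mu)$. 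The spectral decomposition in Theorem \ref{main0}.E writes $\mathcal{L}^k = \Pi + R_k$ with $\|R_k\|_{B^{s,-t}}\leq C\lambda^k$; since $\mathcal{L}$ preserves total mass on the invariant subspace of Borel measures and the $1$-eigenspace is spanned by $\nu$, we have $\Pi\eta = \eta(I^+\!\!\times I^-)\,\nu = \big(\int \rho\,d(m_+\!\!\times\mu)\big)\nu$. Pairing with $\gamma$ and using the quantitative estimate of Theorem \ref{main0}.F on the remainder yields the announced bound.

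The main obstacle is the first step: getting $\delta_y \in B^{-t}_{1,1}(I^-)$ with a norm that is uniform in $y\in I^-$, and ensuring that the $y$-indexed family of slices is strongly measurable so that the Bochner integral is legitimate. Everything else is essentially bookkeeping: the hard analytic input (quasi-compactness, spectral gap, identification of the $1$-eigenspace, and the remainder estimate) is already packaged in Theorem \ref{main0}, and the novelty of Theorem \ref{main1} is exactly that the anisotropic norm $\|\eta\|_{B^{s,-t}}$ of measures of the form $\rho(m_+\!\!\times\mu)$ is controlled by the much more transparent quantity $\int \|\rho(\cdot,y)\|_{B^s_{1,1}}d\mu(y)$, without any regularity assumption on $\mu$.
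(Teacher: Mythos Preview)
Your proposal is correct and follows the same overall strategy as the paper: reduce to showing $\rho\,(m_+\!\times\mu)\in B^{s,-t}$ with norm at most $C\int\|\rho(\cdot,y)\|_{B^s_{1,1}}\,d\mu(y)$, then invoke the spectral gap (the paper uses Theorem \ref{main4}; you use Theorem \ref{main0}.F with multiplier $1$, which is the same thing).

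For the membership step the presentations differ slightly. You package it as a Bochner integral $\int_{I^-}\rho(\cdot,y)\otimes\delta_y\,d\mu(y)$, relying on the uniform bound $\|\delta_y\|_{B^{-t}_{1,1}}\leq C$; this is exactly Proposition \ref{dirac2} in the paper, so your ``main obstacle'' is already handled. The paper instead computes the atomic coefficients $c_{Q,J}$ of $\rho\,(m_+\!\times\mu)$ directly and shows $|c_{Q,J}|\leq C|J|_-^{t}\int_J |c_Q(y)|\,d\mu(y)$, then sums using $\sum_{J\ni y}|J|_-^{t}\leq C$. This last inequality is precisely the coefficient-level statement that $\|\delta_y\|_{B^{-t}_{1,1}}\leq C$, so the two arguments are the same estimate in different dress. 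The paper's concrete computation has the minor practical advantage of bypassing the strong-measurability check you flag, since it only needs scalar Fubini; conversely, your formulation makes the underlying reason for the bound (uniform control of Dirac masses in the negative-regularity space) more transparent.
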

\ \\
Given $\beta > 0$ define the space $\dot{C}^\beta(I^+,I^-)$ of $\beta$-H\"older functions  $u\colon I^+\rightarrow I^-$  
with the pseudo-norm 
$$||u||_{\dot{C}^\beta(I^+,I^-)}= \sup_{\substack{ x,y\in I^+\\ x\neq y}} \frac{d_-(u(x),u(y))}{d_+(x,y)}.$$

 \begin{mainthm}[Exponential Decay of Correlations II]  \label{main7} 
There is $s^+$ and $t^-$ such that if   \( s \),  \( t \), $\beta$ and $\epsilon$ satisfies 
\begin{itemize}
    \item[i.] \( 0 < s < s_+ \), \( 0 < t < t_- \), and \( 0 < s < t <1\),
    \item[ii.] \( 0 < s < \beta t \) , $\epsilon\in (0,1)$ and  \( 0 < \epsilon < \beta t - s \).
\end{itemize} 
then we can chosse $C=C(s,t,\beta,\epsilon)$ and $ \lambda=\lambda(s,t) \in (0,1)$ such that the following holds. 
 Given 
 \begin{itemize} 
 \item[A.] An H\"older function $u \in \dot{C}^\beta(I ^+,I^-)$, 
\item[B.] An function \( \rho \in B^s_{1,1}(I^+)\cap L^{1/\epsilon}(I^+) \),
 \item[C.] A function $\gamma\colon I^+\times I^-\rightarrow \mathbb{C}$ satisfying the assumptions in Theorem \ref{main0}.v
\end{itemize}
Let $U\colon I^+\rightarrow I^+\times I^-$ be  the function  $U(x)=(x,u(x))$. Then  
\[
U^\star(\rho m_+) \in B^{s}_{1,1}(I^+) \otimes B^{-t}_{1,1}(I^-).
\]
and for every $k \geq 0$ 
\begin{align*} &\bigg|\int \gamma\circ \sigma^k  \ dU^\star(\rho m_+) - \int \rho dm_+ \int \gamma \ d\nu     \bigg|\\
&\leq C\lambda^k (\Vert\rho\Vert_{B^s_{1,1}(I^+)} + ||u||_{\dot{C}^\beta(I^+,I^-)}\Vert\rho\Vert_{L^{1/\epsilon}(m_+)}) K_{s,t}(\gamma).
\end{align*}
\end{mainthm}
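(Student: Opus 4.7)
The statement contains two assertions: a structural claim that $U^\star(\rho m_+) \in B^{s,-t}$ together with an explicit norm bound, and the decay-of-correlations estimate. The second part is immediate from Theorem~\ref{main0}.F: take the distribution in that theorem to be $\mu := U^\star(\rho m_+)$ and the multiplier to be the constant function $\rho'\equiv 1$, which trivially satisfies the bi-Hölder hypothesis of item (v). Since Theorem~\ref{main0}.B identifies $\mathcal{L}$ with the pushforward on Borel regular measures in $B^{s,-t}$, one has
\[
\langle \mathcal{L}^k\mu,\gamma\rangle = \int \gamma\circ\sigma^k\,dU^\star(\rho m_+),\qquad \langle\mu,1\rangle = \int\rho\,dm_+,
\]
and the announced correlation bound follows with the rate $\lambda$ from Theorem~\ref{main0}.F, provided $\Vert U^\star(\rho m_+)\Vert_{B^{s,-t}}\leq C(\Vert\rho\Vert_{B^s_{1,1}(I^+)} + \Vert u\Vert_{\dot{C}^\beta(I^+,I^-)}\Vert\rho\Vert_{L^{1/\epsilon}(m_+)})$.

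\textbf{Graph approximation.} To establish the structural claim, I would approximate the graph of $u$ by locally constant step-graphs along cylinders. Fix, for every $n\geq 0$ and every cylinder $C_+(\omega)$ of length $n$, a representative point $x_\omega \in C_+(\omega)$, and set
\[
\mu_n := \sum_{|\omega|=n}\bigl(\rho\cdot 1_{C_+(\omega)}\bigr)\,m_+ \otimes \delta_{u(x_\omega)}.
\]
Each summand is a tensor product, so by the product property of Theorem~\ref{main0}.v together with the duality $(B^{-t}_{1,1}(I^-))^\star = C^t(I^-,d_-)$ (which bounds $\Vert\delta_y\Vert_{B^{-t}_{1,1}}$ uniformly in $y$), each $\mu_n$ lies in $B^{s,-t}$. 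Continuity of $u$ and $\rho \in L^1(m_+)$ imply $\mu_n \to U^\star(\rho m_+)$ weakly, so it suffices to show that the telescoping decomposition
\[
U^\star(\rho m_+) = \mu_0 + \sum_{n\geq 0}(\mu_{n+1} - \mu_n)
\]
converges absolutely in $B^{s,-t}$.

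\textbf{Telescoping estimate.} On a cylinder $C_+(\omega)$ of length $n$, the contribution to $\mu_{n+1}-\mu_n$ has the form $\sum_{a\in\mathcal{A}}(\rho\cdot 1_{C_+(\omega a)})m_+\otimes(\delta_{u(x_{\omega a})}-\delta_{u(x_\omega)})$. Duality with $C^t(I^-,d_-)$ yields
\[
\Vert\delta_{u(x_{\omega a})}-\delta_{u(x_\omega)}\Vert_{B^{-t}_{1,1}}\lesssim d_-(u(x_{\omega a}),u(x_\omega))^t \lesssim \Vert u\Vert_{\dot{C}^\beta}^t\,(\mathrm{diam}_{d_+}C_+(\omega))^{\beta t},
\]
which decays geometrically in $n$. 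For the $I^+$-factor, Hölder's inequality with dual exponents $1/\epsilon$ and $1/(1-\epsilon)$, combined with a scale-localized bound for $\Vert\rho\cdot 1_{C_+(\omega')}\Vert_{B^s_{1,1}(I^+)}$, controls $\sum_{|\omega'|=n+1}\Vert\rho\cdot 1_{C_+(\omega')}\Vert_{B^s_{1,1}(I^+)}$ by $\Vert\rho\Vert_{L^{1/\epsilon}(m_+)}$ multiplied by a factor growing no faster than $(\mathrm{diam})^{-(s+\epsilon)}$. The strict inequality $\beta t - s - \epsilon > 0$ from hypothesis (ii) forces the telescoping series to converge geometrically, yielding the norm estimate. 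The base term $\mu_0$ contributes the $\Vert\rho\Vert_{B^s_{1,1}(I^+)}$ piece, the tail contributes the $\Vert u\Vert_{\dot{C}^\beta}\Vert\rho\Vert_{L^{1/\epsilon}(m_+)}$ piece.

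\textbf{Main obstacle.} The technical heart is the scale-localized Besov estimate $\sum_{|\omega|=n}\Vert\rho\cdot 1_{C_+(\omega)}\Vert_{B^s_{1,1}(I^+)}\lesssim (\mathrm{diam})^{-(s+\epsilon)}\Vert\rho\Vert_{L^{1/\epsilon}(m_+)}$, a symbolic analogue of the Euclidean fact $\Vert 1_E\Vert_{B^s_{1,1}}\sim |E|^{1-s}$ refined by Hölder's inequality to extract an $L^{1/\epsilon}$ norm of $\rho$. I would derive it from the atomic or dyadic/Littlewood--Paley description of $B^s_{1,1}(I^+)$ developed alongside the space in Section~\ref{ba}. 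Once this local estimate and the dual-pairing bound $\Vert\delta_y-\delta_{y'}\Vert_{B^{-t}_{1,1}}\lesssim d_-(y,y')^t$ are in place, the telescoping argument closes the norm bound, and the invocation of Theorem~\ref{main0}.F closes the proof with the constants and rate claimed.
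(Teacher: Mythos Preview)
Your overall strategy matches the paper's: reduce to showing $U^\star(\rho m_+)\in B^{s,-t}$ with the stated norm bound via a telescoping sequence of step-graph approximations, using Proposition~\ref{dirac2} for $\Vert\delta_y-\delta_{y'}\Vert_{B^{-t}_{1,1}}\lesssim d_-(y,y')^t$ and the hypothesis $\beta t>s+\epsilon$ for geometric convergence; then invoke the spectral gap (the paper's Theorem~\ref{main4}, your Theorem~\ref{main0}.F) for the correlation decay.

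The one structural difference is the choice of approximant. You take $\mu_n=\sum_{|\omega|=n}(\rho\,1_{C_+(\omega)})\,m_+\otimes\delta_{u(x_\omega)}$, whereas the paper \emph{averages} $\rho$ first: $\mu_k=\sum_{Q\in\mathcal{C}_+^k} m_+(\rho,Q)\,1_Q\otimes\delta_{u(x_Q)}$. The paper's telescope then splits as
\[
\delta_{u(x_Q)}\bigl(m_+(\rho,P)-m_+(\rho,Q)\bigr)1_P \;+\; \bigl(\delta_{u(x_P)}-\delta_{u(x_Q)}\bigr)m_+(\rho,P)\,1_P.
\]
Summed over all $k$, the first piece is \emph{exactly} the Haar expansion of $\rho$ and contributes precisely $\Vert\rho\Vert_{B^s_{1,1}}$ (see~(\ref{estt})); the second involves only the scalars $m_+(\rho,P)$, which H\"older bounds by $\Vert\rho\Vert_{1/\epsilon}|P|^{-\epsilon}$, and together with Proposition~\ref{pos} contributes the other term. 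This cleanly decouples the two norms in the final estimate.

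Your route works too, but your ``main obstacle'' estimate as stated is slightly off. In the tree structure one has the exact splitting $\rho\,1_P=m_+(\rho,P)\,1_P+\sum_{R\subseteq P}\sum_{Q\in\mathcal{H}_R}c_Q a_Q$, which gives
\[
\sum_{|\omega|=n}\Vert\rho\,1_{C_+(\omega)}\Vert_{B^s_{1,1}}\;\lesssim\;(\max_P|P|)^{-(s+\epsilon)}\Vert\rho\Vert_{1/\epsilon}\;+\;\Vert\rho\Vert_{B^s_{1,1}},
\]
with a non-decaying $\Vert\rho\Vert_{B^s_{1,1}}$ contribution at \emph{every} level $n$, not only at $n=0$. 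After multiplying by $(\max_P|P|)^{\beta t}$ and summing in $n$ the argument still closes (you pick up a harmless extra cross-term of the form $\Vert u\Vert_{\dot C^\beta}^t\Vert\rho\Vert_{B^s_{1,1}}$), so your approach is correct; the paper's averaging trick is simply what eliminates this redundancy and yields the two-term bound directly.
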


In particular if $\rho$ is non negative then  $U^\star(\rho m_+)$ is a Borelian finite measure  supported on the graph of a H\"older function.

\subsection{Do we really need distributions?}

One might ask whether a space of distributions, rather than a space of functions, is truly necessary in Theorem \ref{main0}. Indeed, it is essential. First, when \( \psi^- \) is not cohomologous to \( \phi^- \), the Gibbs state \( \nu \) is not absolutely continuous with respect to \( m_+^{\phi_+} \times m_-^{\psi_-} \) (See Proposition \ref{srbp}.B. In other words, \( \nu \not\in L^1(m_+^{\phi_+} \times m_-^{\psi_-}) \) by abuse of notation), making it impossible to obtain a spectral gap in a Banach space of functions \( B \) that is continuously embedded in \( L^1 \), as this would imply the contrary.

However, there is a more basic reason why a spectral gap cannot exist for a Banach space of functions continuously embedded in \( L^1 \), even in this scenario. The issue lies in the injectivity of \( \sigma \).

\begin{prop}
Consider an \textit{injective} measurable dynamical system \( T\colon X \rightarrow X \) with a quasi-invariant measure \( m \) on a measurable space $(X,\mathcal{A})$. Let \( \mathcal{L}_T \) denote the transfer operator associated with \( (T,m) \). Suppose there exists a Banach space of functions \( B \subset L^1(m) \) such that:
\begin{itemize}
    \item[i.] \( B \) is continuously embedded in \( L^1(m) \),
    \item[ii.] \( B \) is dense in \( L^1(m) \),
    \item[iii.] \( B \) is invariant under \( \mathcal{L}_T \), \( \mathcal{L}_T \) is bounded on \( B \), and its essential spectral radius is less than \( 1 \).
\end{itemize}
Then \( (X,\mathcal{A},m) \) is atomic with a finite number of atoms
\end{prop}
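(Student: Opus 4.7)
The plan is to extract a finite-dimensional ``attractor'' in $L^1(m)$ from the quasi-compactness of $\mathcal{L}_T$ on $B$, and then exploit the injectivity of $T$ to fit infinitely many linearly independent vectors into this attractor whenever $L^1(m)$ is infinite-dimensional.

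First I would apply the Riesz spectral decomposition to $\mathcal{L}_T|_B$: because $r_e(\mathcal{L}_T|_B)<1$, there exist a finite-dimensional $\mathcal{L}_T$-invariant subspace $V\subset B$, a bounded projection $P\colon B\to V$, and constants $C>0$, $\lambda\in(0,1)$ such that $\|\mathcal{L}_T^n f-\mathcal{L}_T^n P f\|_B\le C\lambda^n\|f\|_B$ for every $f\in B$. The continuous embedding $B\hookrightarrow L^1(m)$ transports this bound to the $L^1$-norm; combined with the fact that $\mathcal{L}_T$ is a Markov operator on $L^1(m)$ (positivity together with $\int\mathcal{L}_T f\,dm=\int f\,dm$ force $\|\mathcal{L}_T\|_{L^1\to L^1}\le 1$) and the density of $B$ in $L^1(m)$, a standard approximation argument yields
$$\dist_{L^1}(\mathcal{L}_T^n f,\,V)\xrightarrow[n\to\infty]{}0\qquad\text{for every }f\in L^1(m).$$
Since $V$ is finite dimensional (hence closed in $L^1$) and $\{\mathcal{L}_T^n f\}$ is $L^1$-bounded, the orbit $\{\mathcal{L}_T^n f\}_{n\ge 0}$ is relatively compact in $L^1(m)$, with all cluster points in $V$.

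Now suppose, for contradiction, that $(X,\mathcal{A},m)$ is not atomic with finitely many atoms. Then there is a countable family $\{A_j\}_{j\in\mathbb{N}}$ of pairwise disjoint measurable sets with $0<m(A_j)<\infty$. Injectivity of $T$ implies that the images $T^n A_j$ are pairwise disjoint for every fixed $n$; thus the nonnegative functions $\mathcal{L}_T^n \mathbf{1}_{A_j}$ are supported on $T^n A_j$, have integral $m(A_j)$, and are pairwise disjointly supported at every $n$. Applying the previous step to each $\mathbf{1}_{A_j}$ and extracting diagonally, one obtains a single sequence $n_k\to\infty$ along which $\mathcal{L}_T^{n_k}\mathbf{1}_{A_j}\to g_j$ in $L^1(m)$ for every $j$, with $g_j\in V$; a further diagonal extraction makes the convergence hold almost everywhere as well. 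Passing the pointwise identity $\mathcal{L}_T^{n_k}\mathbf{1}_{A_j}\cdot\mathcal{L}_T^{n_k}\mathbf{1}_{A_i}=0$ to the limit gives $g_jg_i=0$ almost everywhere for $j\ne i$, so the $g_j$ have pairwise disjoint supports. Each $g_j$ is nonnegative with $\int g_j\,dm=m(A_j)>0$, hence the family $\{g_j\}_{j\in\mathbb{N}}$ is linearly independent inside the finite-dimensional space $V$, contradicting $\dim V<\infty$.

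The genuinely nontrivial step is the transfer of the spectral gap from $B$ to $L^1(m)$; everything else follows from injectivity and the stability of disjointness of supports under almost-everywhere limits. The only mild technicality is ensuring that the diagonal extraction simultaneously delivers $L^1$ and almost-everywhere convergence for the whole countable family $\{A_j\}_{j\in\mathbb{N}}$, which is standard. Notably, no delicate analysis of the Jacobian of $T$ is needed, because only disjointness (not any equality of norms) is used to force linear independence, which is why the argument covers the full quasi-invariant setting.
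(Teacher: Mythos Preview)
Your argument is correct, but the paper reaches the conclusion by a much shorter route. The key observation you do not exploit is that for an \emph{injective} $T$ the transfer operator is an $L^1$-\emph{isometry} on all of $L^1(m)$, not merely a contraction: since $\mathcal{L}_T f^+$ and $\mathcal{L}_T f^-$ have disjoint supports, one has $|\mathcal{L}_T^k f|_{L^1}=|f|_{L^1}$ for every $f\in L^1(m)$. Combined with the continuous embedding, this forces the complementary subspace $W$ in the Riesz decomposition $B=V\oplus W$ to be trivial: any $f\in W$ satisfies $\|\mathcal{L}_T^n f\|_{L^1}\to 0$ and simultaneously $\|\mathcal{L}_T^n f\|_{L^1}=\|f\|_{L^1}$, so $f=0$. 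Hence $B=V$ is itself finite-dimensional, and density makes $L^1(m)$ finite-dimensional, which is exactly the atomic conclusion.

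Your route---transporting the gap to $L^1$, extracting limits by compactness, and producing infinitely many disjointly supported elements of $V$---uses injectivity only through preservation of disjoint supports, and uses the $L^1$ action only through the contraction bound. This works, but the approximation, the two layers of diagonal extraction, and the passage to almost-everywhere limits are all avoidable once one notices the isometry: in fact the ``attractor'' $V$ you build is already all of $B$. What your approach buys is that it would still go through under the weaker hypothesis that $\mathcal{L}_T$ merely preserves disjointness of supports (rather than being an $L^1$-isometry), though in the injective setting these coincide.
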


\begin{proof}
The injectivity of \( T \) implies that
\begin{equation} \label{fff} 
|\mathcal{L}_T^k f|_{L^1(m)} = |f|_{L^1(m)}
\end{equation}
for \textit{every} \( f \in L^1(m) \). From (\ref{fff}) and property (i), we conclude that the spectrum of \( \mathcal{L}_T \) on \( B \) lies on the unit circle and consists of finitely many isolated eigenvalues, each with a finite-dimensional generalized eigenspace. Consequently, \( B \) must be finite-dimensional, and from property (ii), it follows that \( L^1(m) \) is finite-dimensional. Thus $(X,\mathcal{A},m)$ is atomic with a finite number of atoms.
\end{proof}

\section{Measure space with a good grid}

Let $(I, \mathcal{A}, m)$ be a measure space, with $m(I) = 1.$ We will use the notation $|\cdot| = m(\cdot)$ when it is clear which measure is being used.

\begin{defi}
    Consider $0 < \lambda_1 < \lambda_2 < 1,$ and we call $\mathcal{P} = (\mathcal{P}^n)_{n \in \mathbb{N}}$ a good grid (see Arbieto and Smania \cite{arbieto2019transfer}) if it has the following properties:
\begin{enumerate}
    \item $\mathcal{P}^0 = \{I\}.$
    \item $I = \cup_{Q \in \mathcal{P}^n} Q$ (up to a set of measure zero).
    \item The elements of the family $\{Q\}_{Q \in \mathcal{P}^n}$ are pairwise disjoint (up to a set of measure zero).
    \item Given $Q \in \mathcal{P}^n$ and $n > 0$, there exists $P \in \mathcal{P}^{n-1}$ such that $Q \subseteq P.$
    \item Given $Q \subseteq P$ satisfying $Q \in \mathcal{P}^{n+1}$ and $P \in \mathcal{P}^n$ for some $n \geq 0$, we have
    $$\lambda_1 \leq \frac{|Q|}{|P|} \leq \lambda_2.$$
    \item The family $\cup_n \mathcal{P}^n$ generates the $\sigma$-algebra $\mathcal{A}.$
\end{enumerate}
\end{defi}
The constants $\lambda_1$ and $\lambda_2$ describe the geometry of the good partition $\mathcal{P} = (\mathcal{P}^n)_{n \in \mathbb{N}} = \cup_n \mathcal{P}^n$. By making simple adjustments to the good grid, we can eliminate the need for the "up to a set of measure zero" assumption, which we will consider as having been done.

There is a quite wide set of examples of measure spaces with a good grid.

\begin{rem}\label{gibbs}  The example that will be used here are the measure spaces $(I_\theta,m_\theta^{\psi_\theta},\mathcal{C}_\theta)$, with $\theta\in \{+,-\}$, where $m_\theta^{\psi_\theta}$ is a Gibbs reference measure to the potential $\psi_\theta$. When the chosen $\psi_\theta$ is  clear we will denote  $|A|_\theta=m_\theta^{\psi_\theta}(A)$.
\end{rem}

\section{Banach Spaces} \label{ba} 
\subsection{Unbalanced Haar wavelets} Given $P \in\mathcal{P}$, let $\mathcal{H}_P$ be a collection of pairs $(L,R)$ with the following properties
\begin{itemize}
\item[A.] If $(L,R)\in \mathcal{H}_P$ then $L$ and $R$ are non empty collections of children of $Q$.
\item[B.] If $(L,R)\in \mathcal{H}_P$ then $\cup L$ and $\cup R$ are disjoint.
\item[C.] If $(L,R)\in \mathcal{H}_P$ and  $L$ has more than one child then there is exactly a pair $(L',R') \in \mathcal{H}_P$ such that $L=(\cup L')\cup (\cup R')$.
\item[D.] If $(L,R)\in \mathcal{H}_P$ and  $R$ has more than one child then there is exactly a pair $(L',R') \in \mathcal{H}_P$ such that $R=(\cup L')\cup (\cup R')$.
\item[E.] If $(L',R')\in \mathcal{H}_P$ then either $P=(\cup L')\cup (\cup R')$ or there is $(L,R) \in  \mathcal{H}_Q$ such that either $R=L'\cup R'$ or $L=L'\cup R'$
\end{itemize} 
If $\mathcal{P}$ is not a binary tree (that is, there are elements with at least three children) then you may have many choices for $\mathcal{H}_P.$ Let $\mathcal{H}=\{I\}\cup \cup_{P\in \mathcal{P}} \mathcal{H}_P$. 

Given $Q\in \mathcal{H}\setminus \{I\}$, let $R_Q$ and $L_Q$ be  such that  $Q=(L_Q,R_Q)$. Let $S\subset \mathcal{P}$. We define 
$$|S|=\cup_{A\in S} |A|.$$

We define the {\bf $\bf{(s,p)}$-atom} $a_Q: I \rightarrow \mathbb{R}$ with support in $Q$ is
\begin{equation}\label{6.74}
a_Q = |Q|^{s+1-1/p} \left(\frac{1_{L_Q}}{|L_Q|} - \frac{1_{R_Q}}{|R_Q|}\right),
\end{equation}
Here $1_S$ denotes the indicator function of the set $\cup S$. Define $a_I=1_I$.

\begin{figure}[h]
\includegraphics[scale=0.5]{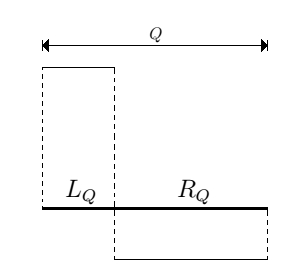}
\caption{An atom $a_Q$.}
\end{figure}

Note that $a_Q$ are pairwise orthogonal on $L^2(m)$. Indeed we have that 
$$\{a_Q\}_{Q\in \mathcal{H}}$$
is an unconditional Schauder basis of  $L^p(m)$, for every $p > 1$ (Girardi and Sweldens \cite{Sweldens1997}).

\subsection{The space of functions $B^s_{\infty,\infty}$} Let $s > 0$.  Define $B^s_{\infty,\infty}$ as the space of all function $\psi \in L^\infty(m)$ that can be written as
\begin{equation}\label{ad} \psi = \sum_{n \in \mathbb{N}} \sum_{P \in \mathcal{P}^n}\sum_{Q \in \mathcal{H}_P}  c_Q a_Q,\end{equation}
where $a_Q$ is a $(s,\infty)$ atom on $Q$, and 
$$\Vert\psi\Vert_{B^s_{\infty,\infty}}=\sup_{n\in \mathbb{N}} \sup_{P\in \mathcal{P}^n}  \sup_{Q\in \mathcal{H}_P} |c_Q|< \infty.$$

$B^s_{\infty,\infty}$ endowed with the norm $|\cdot|_{B^s_{\infty,\infty}}$ is a Banach space.

\begin{rem}\label{metric}  Define the pseudo-metric 
$$d_{I,\mu,\mathcal{P}}(x,y)= \inf_{\substack{ x,y \in P\\ P\in \mathcal{P}}} |P|.$$
 The space  $B^s_{\infty,\infty}$ is exactly the space of $s$-H\"older continuous functions $C^s(I,d_{I,\mu,\mathcal{P}})$ with respect to $d$ (see \cite{mms}) endowed with the usual norm. In fact, the good grid property implies that  the  space
$$\bigcup_{s> 0} B^s_{\infty,\infty}$$
{\it does not depend}  on the particularly chosen measure $\mu$ and consists exactly in the space of all H\"older functions (regardless of the exponent).  In fact if $(I,\mu,\mathcal{P})$ and $(I,\hat{\mu},\mathcal{P})$ are good grids then there is $\beta > 0$ and $C >  0$ such that 
$$d_{I,\mu,\mathcal{P}}(x,y)\leq C d_{I,\hat{\mu},\mathcal{P}}(x,y)^\beta  $$
for $x,y \in I$, that is, all those metrics are {\it H\"older equivalents}.  
\end{rem}

\subsection{The space of functions $B^s_{1,1}$} Let $s \in (0,1)$ Define $B^s_{1,1}$ as the space of all functions $\psi \in L^1(m)$ that can be written as in (\ref{ad}) 
where $a_Q$ is a $(s,1)$ atom on $Q$, and 
$$\Vert\psi\Vert_{B^s_{1,1}}=\sum_{n\in \mathbb{N}} \sum_{P\in \mathcal{P}^n}  \sum_{Q\in \mathcal{H}_P} |c_Q|< \infty.$$

Introduced in S. \cite{smania2022besov}, the set $B^s_{1,1}$, endowed with the norm $|\cdot|_{B^s_{1,1}}$ is also a Banach space of functions.

\begin{rem} It is easy to see that for $\beta > s$ $$B^\beta_{\infty,\infty}\subset B^s_{1,1} \subset L^1(m).$$
and that those inclusions are  continuous. 
\end{rem} 
The following will be useful 

\begin{prop}[\cite{smania2022besov}] \label{pos} Let $s\in (0,1)$. There is $C > 0$ such that for every $P \in \mathcal{P}$ we have
$$\Big| \frac{1_P}{|P|}  \Big|_{B^{s}_{1,1}}\leq C|P|^{-s}.$$
\end{prop}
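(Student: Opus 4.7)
The plan is to write $\frac{1_P}{|P|}$ as an explicit atomic expansion by a double descent: first through the chain of ancestors of $P$ in $\mathcal{P}$, and then, inside each $\mathcal{H}_{P_{k-1}}$, along the unique Haar-tree path that isolates the next ancestor. The crux will be a Riemann-sum bound that turns the double descent into two telescoping collapses.

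First, fix $P \in \mathcal{P}^N$ and form its chain of ancestors $I = P_0 \supsetneq P_1 \supsetneq \cdots \supsetneq P_N = P$ with $P_k \in \mathcal{P}^k$. The identity
$$\frac{1_P}{|P|} \;=\; a_I \,+\, \sum_{k=1}^{N} \left(\frac{1_{P_k}}{|P_k|} - \frac{1_{P_{k-1}}}{|P_{k-1}|}\right)$$
reduces the problem to expanding each telescoping difference into atoms from $\mathcal{H}_{P_{k-1}}$. Inside $\mathcal{H}_{P_{k-1}}$ I would follow the unique path from the root pair $(L_1,R_1)$ down to the first node at which $P_k$ appears alone on one side. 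This produces atoms $Q_1^{(k)},\dots,Q_{\ell_k}^{(k)}\in \mathcal{H}_{P_{k-1}}$ and a strictly decreasing chain $S_0^{(k)} = P_{k-1} \supsetneq S_1^{(k)} \supsetneq \cdots \supsetneq S_{\ell_k}^{(k)} = P_k$, where $S_j^{(k)}$ is the side of $Q_j^{(k)}$ containing $P_k$. Property (E) of $\mathcal{H}$ forces $|Q_{j+1}^{(k)}| = |S_j^{(k)}|$, and comparing the values in (\ref{6.74}) on the two halves yields
$$\frac{1_{S_{j+1}^{(k)}}}{|S_{j+1}^{(k)}|} - \frac{1_{S_j^{(k)}}}{|S_j^{(k)}|} \;=\; \lambda_{j+1}^{(k)}\, a_{Q_{j+1}^{(k)}}, \qquad |\lambda_{j+1}^{(k)}| \;=\; \frac{|S_j^{(k)}|-|S_{j+1}^{(k)}|}{|S_j^{(k)}|^{1+s}}.$$

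Second, the crucial bookkeeping step is to observe, using that $x \mapsto x^{-1-s}$ is non-increasing, that
$$|\lambda_{j+1}^{(k)}| \;\le\; \int_{|S_{j+1}^{(k)}|}^{|S_j^{(k)}|} x^{-1-s}\,dx.$$
Summing over $j$ along the Haar path telescopes to $\int_{|P_k|}^{|P_{k-1}|} x^{-1-s}\,dx$, and then summing over $k$ along the ancestor chain telescopes a second time:
$$\sum_{k=1}^N \sum_{j=0}^{\ell_k-1} |\lambda_{j+1}^{(k)}| \;\le\; \int_{|P|}^1 x^{-1-s}\, dx \;=\; \frac{|P|^{-s}-1}{s}.$$
Adding the $a_I$ contribution of $1$ yields $\bigl|\tfrac{1_P}{|P|}\bigr|_{B^s_{1,1}} \le 1 + \tfrac{|P|^{-s}-1}{s} \le \tfrac{1}{s}|P|^{-s}$, since $|P|\le 1$ and $s\in(0,1)$. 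This is the claim with $C = 1/s$.

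The main obstacle to sidestep is that the horizontal Haar path can be arbitrarily long: $\ell_k$ is only bounded by the number of children of $P_{k-1}$, and if $\mathcal{P}$ is not uniformly bounded-branching (or even if it is, the constant would then enter $C$) then the naive estimate $|\lambda_{j+1}^{(k)}| \le |S_j^{(k)}|^{-s}$ produces something like $\ell_k\, |P_k|^{-s}$ and ruins uniformity. Recasting each coefficient as a lower Riemann bound for $\int x^{-1-s}$ across the strictly decreasing sequence $\{|S_j^{(k)}|\}_{j=0}^{\ell_k}$ is exactly what synchronises the horizontal Haar-tree telescoping with the vertical ancestor telescoping and produces a bound depending only on $s$.
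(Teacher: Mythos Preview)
Your argument is correct. The telescoping along the ancestor chain together with the Haar-tree path inside each $\mathcal{H}_{P_{k-1}}$ gives the exact atomic expansion of $1_P/|P|$, and your computation of the coefficients
\[
|\lambda_{j+1}^{(k)}|=\frac{|S_j^{(k)}|-|S_{j+1}^{(k)}|}{|S_j^{(k)}|^{1+s}}
\]
is right (it follows from $a_Q=|Q|^s\bigl(1_{L_Q}/|L_Q|-1_{R_Q}/|R_Q|\bigr)$ by a direct check). The Riemann-sum trick is the genuinely clever step: it absorbs the a priori unbounded length $\ell_k$ of the Haar path into the integral $\int_{|P_k|}^{|P_{k-1}|}x^{-1-s}\,dx$, so the final constant $C=1/s$ depends only on $s$ and not on the branching of the grid. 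One cosmetic remark: the identity $|Q_{j+1}^{(k)}|=|S_j^{(k)}|$ is really a consequence of properties (C)/(D) (which say that $S_j^{(k)}$, having more than one child, is split by a unique pair in $\mathcal{H}_{P_{k-1}}$), not of (E).

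As for comparison with the paper: there is nothing to compare, since Proposition~\ref{pos} is quoted from \cite{smania2022besov} without proof here. Your self-contained argument is a welcome addition, and the explicit constant $1/s$ is sharper than the unspecified $C$ in the statement.
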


\subsection{The space of distributions $B^{-s}_{1,1}$}

Define the space of {\bf test functions} $V_0$  as the space of all functions $\psi$ that can be written as (\ref{ad}) such that $a_Q$ is a $(0,1)$-atom on $Q$ and only a {\it finite} number  of coefficients $c_Q$ is non vanishing. A {\bf distribution} is a linear functional on $V_0$.

Let $s\in (0,1)$. The Besov space $B^{-s}_{1,1}$, introduced in Marra, Morelli and Smania \cite{mms},  is the space of distributions that can be written as in (\ref{ad}) 
where $a_Q$ is a $(-s,1)$ atom on $Q$, and 
$$\Vert\psi\Vert_{B^{-s}_{1,1}}=\sum_{n\in \mathbb{N}} \sum_{P\in \mathcal{P}^n}  \sum_{Q\in \mathcal{H}_P} |c_Q|< \infty.$$
$B^{-s}_{1,1}$ is a Banach space with such norm. We have

\begin{prop}[\cite{mms}]\label{dirac2} Let $s\in (0,1)$. There is $C > 0$ such that for every $P \in \mathcal{P}$ we have
$$\Big| \frac{1_P}{|P|}  \Big|_{B^{-s}_{1,1}}\leq C.$$
Indeed, given $x_0\in I$ and a sequence $P_n \in \mathcal{P}^n$ such that $x\in P_n$ for every $n$ we have that the limit 
$$\delta_{x_0}= \lim_n \frac{1_{P_n}}{|P_n|}$$
exists on $B^{-s}_{1,1}$. Furthermore if $x,y \in P\in \mathcal{P}$ we have
$$|\delta_x-\delta_y|_{B^{-s}_{1,1}}\leq C|P|^s.$$
\end{prop}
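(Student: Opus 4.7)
My plan is to construct an explicit atomic decomposition of $\tfrac{1_P}{|P|}$ via a double telescoping argument. The starting point is the elementary identity: for any disjoint $L, R$ of positive measure,
$$\frac{1_L}{|L|} - \frac{1_{L \cup R}}{|L \cup R|} = \frac{|R|}{|L \cup R|}\left(\frac{1_L}{|L|} - \frac{1_R}{|R|}\right),$$
together with its mirror obtained by swapping $L$ and $R$. Applied to a pair $Q = (L_Q, R_Q) \in \mathcal{H}_{P'}$, this recasts $\frac{1_{L_Q}}{|L_Q|} - \frac{1_{L_Q\cup R_Q}}{|L_Q \cup R_Q|}$ as a scalar multiple of the $(-s,1)$-atom $a_Q = |L_Q\cup R_Q|^{-s}\bigl(\tfrac{1_{L_Q}}{|L_Q|} - \tfrac{1_{R_Q}}{|R_Q|}\bigr)$, the corresponding coefficient being $|R_Q|\cdot|L_Q\cup R_Q|^{s-1}$, which is at most $|L_Q \cup R_Q|^s \leq |P'|^s$ in absolute value.

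The first real step is to show that, for any child $Q$ of $P' \in \mathcal{P}$, one can descend the binary-tree structure of $\mathcal{H}_{P'}$ (guaranteed by properties C, D, E) along the unique path that isolates $Q$ inside either the $L$-part or the $R$-part of each successively finer pair. Iterating the identity above along this path expresses $\tfrac{1_Q}{|Q|} - \tfrac{1_{P'}}{|P'|}$ as a sum of at most $N_\star := \lfloor 1/\lambda_1\rfloor$ atoms (the good-grid lower bound $\lambda_1 \leq |Q|/|P'|$ forces $P'$ to have at most $\lfloor 1/\lambda_1\rfloor$ children, so the Haar binary tree has uniformly bounded depth), with each coefficient bounded in absolute value by $|P'|^s$. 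Next, given $P \in \mathcal{P}^n$ with ancestor chain $I = P^{(0)} \supset P^{(1)} \supset \cdots \supset P^{(n)} = P$, telescope
$$\frac{1_P}{|P|} = a_I + \sum_{j=0}^{n-1}\left(\frac{1_{P^{(j+1)}}}{|P^{(j+1)}|} - \frac{1_{P^{(j)}}}{|P^{(j)}|}\right)$$
and apply the previous expansion on each level. The resulting atomic series has total coefficient sum bounded by $1 + N_\star \sum_{j=0}^{\infty} |P^{(j)}|^s \leq 1 + N_\star \sum_{j=0}^\infty \lambda_2^{js} = 1 + N_\star/(1-\lambda_2^s)$, independent of $P$. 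This gives the first assertion.

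For the existence of $\delta_{x_0} = \lim_n \tfrac{1_{P_n}}{|P_n|}$, the same expansion exhibits $\tfrac{1_{P_n}}{|P_n|}$ as the partial sum of an absolutely convergent series in $B^{-s}_{1,1}$, so the sequence is Cauchy and the limit exists. For the Hölder estimate, if $x, y \in P \in \mathcal{P}^n$ with chains $(P_m)_{m\geq n}$, $(P'_m)_{m\geq n}$ satisfying $P_n = P'_n = P$, then retaining only the levels $k \geq n$ in the telescoping gives
$$\left\|\delta_x - \tfrac{1_P}{|P|}\right\|_{B^{-s}_{1,1}} \leq N_\star \sum_{k \geq n} |P_k|^s \leq \frac{N_\star\,|P|^s}{1 - \lambda_2^s},$$
and similarly for $\delta_y$; the triangle inequality finishes the third assertion.

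The only genuine subtlety lies in verifying that the descent through $\mathcal{H}_{P'}$ is well-defined and produces legitimate pairs at every step, which is a careful bookkeeping exercise based on reading properties C-E as asserting that $\mathcal{H}_{P'}$ has the structure of a rooted binary tree whose leaves are exactly the singleton child-collections. Once this is in place, everything reduces to the one-line identity above summed geometrically using the good-grid decay $|P^{(j)}| \leq \lambda_2^j$.
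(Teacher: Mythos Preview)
The paper does not prove this proposition; it is quoted from \cite{mms} without argument, so there is no in-paper proof to compare against. Your proposal is a correct and self-contained proof: the telescoping identity
\[
\frac{1_L}{|L|}-\frac{1_{L\cup R}}{|L\cup R|}=\frac{|R|}{|L\cup R|}\Bigl(\frac{1_L}{|L|}-\frac{1_R}{|R|}\Bigr)
\]
is exactly the right device, and your reading of properties C--E as giving $\mathcal{H}_{P'}$ the structure of a rooted binary tree whose leaves are the children of $P'$ is accurate. The bound $N_\star=\lfloor 1/\lambda_1\rfloor$ on the number of children (hence on the depth of that tree) is correct, and the geometric summation $\sum_{j\ge 0}\lambda_2^{js}<\infty$ handles the rest. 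The argument for the existence of $\delta_{x_0}$ as the limit of the partial atomic sums, and the H\"older estimate via $\lVert\delta_x-\tfrac{1_P}{|P|}\rVert_{B^{-s}_{1,1}}\le N_\star|P|^s/(1-\lambda_2^s)$ followed by the triangle inequality, are both sound.

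One small remark: in the step $|L_Q\cup R_Q|^s\le |P'|^s$ you are using $s>0$; this is fine since the proposition assumes $s\in(0,1)$, but it is worth noting since the same telescoping with the opposite inequality is what underlies the companion Proposition~\ref{pos} for $B^s_{1,1}$.
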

On can asks if we can extend a distribution $\psi$ as a bounded functional of a wider class of functions. More precisely given a measurable function $g\colon I \rightarrow \mathbb{C}$ we can ask when the limit 

$$\langle \psi,g\rangle = \lim_{N\rightarrow \infty} \sum_{n\leq N} \sum_{P \in \mathcal{P}^n}\sum_{Q \in \mathcal{H}_P}  \int  c_Q a_Q g \ dm$$
exists. 

\begin{prop}[\cite{mms}] Let $s\in (0,1)$. Given $g\in B^s_{\infty,\infty}(I)$ we have 
$$|\langle \psi,g\rangle|\leq \Vert\psi\Vert_{B^{-s}_{1,1}} \Vert g\Vert_{B^{s}_{\infty,\infty}}.$$
Indeed $(B^{-s}_{1,1}(I))^\star= B^s_{\infty,\infty}(I).$
\end{prop}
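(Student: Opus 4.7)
The plan is to expand $\psi\in B^{-s}_{1,1}$ in the Haar atom basis, pair with $g\in B^s_{\infty,\infty}$ atom by atom, and use the vanishing-mean property of the atoms $a_Q$ together with the identification of $B^s_{\infty,\infty}$ with a H\"older space from Remark \ref{metric} to produce a uniform bound on each term. This will simultaneously yield the absolute convergence of the defining series for $\langle\psi,g\rangle$ and the bilinear inequality; the identification of the dual then follows by inverting this procedure on atoms.

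For the bound, write $\psi=\sum_Q c_Q a_Q$ with $(-s,1)$-atoms, so that $a_Q = |Q|^{-s}h_Q$ where $h_Q := \frac{1_{L_Q}}{|L_Q|}-\frac{1_{R_Q}}{|R_Q|}$, $\|\psi\|_{B^{-s}_{1,1}} = \sum|c_Q|$, and $\int h_Q\,dm=0$ for $Q\ne I$. Since $h_Q$ is supported in the parent $P\in\mathcal{P}$, for any $x_0\in P$ we have $\int a_Q g\,dm = \int a_Q(g-g(x_0))\,dm$. By Remark \ref{metric}, $\|g\|_{B^s_{\infty,\infty}}$ coincides with the $s$-H\"older seminorm of $g$ with respect to the metric generated by the grid, so $|g(x)-g(x_0)|\le |P|^s\|g\|_{B^s_{\infty,\infty}}$ on $P$. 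Together with $\int|h_Q|\,dm=2$ and the good-grid comparability of $|Q|$ and $|P|$, this yields
\begin{equation*}
|\langle a_Q,g\rangle|\le C\|g\|_{B^s_{\infty,\infty}}
\end{equation*}
uniformly in $Q$, with the case $Q=I$ handled by $|\int a_I g\,dm|\le\|g\|_\infty\le\|g\|_{B^s_{\infty,\infty}}$. Summing then delivers both the absolute convergence of $\sum_Q c_Q\int a_Q g\,dm$ and the inequality $|\langle\psi,g\rangle|\le C\|\psi\|_{B^{-s}_{1,1}}\|g\|_{B^s_{\infty,\infty}}$.

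For the identification $(B^{-s}_{1,1})^\star = B^s_{\infty,\infty}$, given $T$ in the dual I set $d_Q := T(a_Q)/\kappa_Q$, where $\kappa_Q:=\int a_Q b_Q\,dm=|Q|(|L_Q|^{-1}+|R_Q|^{-1})$ and $b_Q$ is the companion $(s,\infty)$-atom. Since $\|a_Q\|_{B^{-s}_{1,1}}=1$ and $\kappa_Q$ is bounded above and below by good-grid constants, $(d_Q)$ is bounded by $C\|T\|$, so $g:=\sum_Q d_Q b_Q$ defines an element of $B^s_{\infty,\infty}$ with norm controlled by $\|T\|$. The $L^2$-orthogonality of Haar atoms then guarantees that $T$ and $\langle\cdot,g\rangle$ agree on every finite Haar sum, and by density they agree on all of $B^{-s}_{1,1}$.

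The hardest part will be the orthogonality calculation underlying the dual identification: verifying $\int a_Q b_{Q'}\,dm=0$ whenever $Q\ne Q'$ requires a case analysis on how the supports nest, using condition (E) of $\mathcal{H}$ to place the support of the finer atom inside one of $\cup L_Q$ or $\cup R_Q$, where the coarser atom is constant, so that the integral collapses against the mean-zero property. The inequality side, by contrast, is comparatively straightforward, reducing to a one-line computation once the scale-ratio control from the good grid is in place.
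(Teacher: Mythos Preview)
The paper does not supply a proof of this proposition; it is quoted from \cite{mms}. Your argument is therefore being compared against a citation, not against a proof in the text, and on its own merits it is sound.

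A few remarks. For the inequality, your H\"older-space route via Remark~\ref{metric} is correct, but note that it only yields $|\langle\psi,g\rangle|\le C\|\psi\|_{B^{-s}_{1,1}}\|g\|_{B^s_{\infty,\infty}}$ with a grid-dependent constant $C$ (coming from the ratio $|P|^s/|Q|^s\le\lambda_1^{-s}$ and the norm equivalence in Remark~\ref{metric}); the constant-free form in the statement presumably reflects a specific normalization in \cite{mms}. A slightly more direct alternative, which you in fact already use for the dual identification, is to expand \emph{both} $\psi$ and $g$ in their respective atom bases and invoke the $L^2$-orthogonality of the Haar system (stated just after (\ref{6.74}) in the paper): then $\langle\psi,g\rangle=\sum_Q c_Q d_Q\kappa_Q$ with $\kappa_Q=|Q|(|L_Q|^{-1}+|R_Q|^{-1})$ uniformly bounded by the good-grid axioms, which immediately gives the estimate and also makes the passage to the dual transparent. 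Your case analysis for the orthogonality $\int a_Q b_{Q'}\,dm=0$ is correct, and in fact the paper takes this orthogonality as already known. Either way, the argument is complete.
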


\subsection{The space of distributions $B^{-s}_{\infty,\infty}$}    
Let $s\in (0,1)$. We define  $B^{-s}_{\infty,\infty}$  as the space of distributions that can be written as in (\ref{ad}) 
where $a_Q$ is a $(-s,\infty)$ atom on $Q$, and 
$$\Vert\psi\Vert_{B^{-s}_{\infty,\infty}}=\sup_{n\in \mathbb{N}} \sup_{P\in \mathcal{P}^n}  \sup_{Q\in \mathcal{H}_P} |c_Q|< \infty.$$ 
$B^{-s}_{\infty,\infty}$  is a Banach space with such norm. We have

\begin{prop} The dual of $B^s_{1,1}$ is $B^{-s}_{\infty,\infty}$.
\end{prop}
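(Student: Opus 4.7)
The plan is to view both $B^s_{1,1}$ and $B^{-s}_{\infty,\infty}$ as coefficient spaces in the unbalanced Haar system $\{a_Q\}_{Q\in\mathcal{H}}$, equipped with $\ell^1$- and $\ell^\infty$-type norms on the coefficient sequences, and then transport the classical duality $(\ell^1)^\star = \ell^\infty$ through these coefficient maps. Concretely, writing $h_Q := 1_{L_Q}/|L_Q| - 1_{R_Q}/|R_Q|$ (with $h_I := 1_I$) so that $a_Q^{(s,p)} = |Q|^{s+1-1/p} h_Q$, the key computation is the orthogonality relation
$$\int a_Q^{(-s,\infty)}\, a_{Q'}^{(s,1)}\, dm \;=\; \delta_{Q,Q'}\, N_Q, \qquad N_Q := |Q|\Big(\tfrac{1}{|L_Q|}+\tfrac{1}{|R_Q|}\Big),$$
together with $N_I := 1$. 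The good-grid bounds force $2 \leq N_Q \leq 2/\lambda_1$ uniformly in $Q$.

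For the easy inclusion $B^{-s}_{\infty,\infty} \hookrightarrow (B^s_{1,1})^\star$, I would fix $\phi = \sum d_Q a_Q^{(-s,\infty)} \in B^{-s}_{\infty,\infty}$ and $\psi = \sum c_Q a_Q^{(s,1)} \in B^s_{1,1}$, and set
$$\langle \phi,\psi\rangle := \sum_{Q\in\mathcal{H}} N_Q\, c_Q d_Q.$$
This sum converges absolutely with $|\langle\phi,\psi\rangle|\leq (2/\lambda_1)\,\|\phi\|_{B^{-s}_{\infty,\infty}}\|\psi\|_{B^s_{1,1}}$. The formula coincides with the usual distributional $V_0$-pairing on test functions (by orthogonality), so it extends the distribution $\phi$ to a bounded linear functional on all of $B^s_{1,1}$.

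For the surjective direction $(B^s_{1,1})^\star \hookrightarrow B^{-s}_{\infty,\infty}$, given $T\in (B^s_{1,1})^\star$ I would set $d_Q := T(a_Q^{(s,1)})/N_Q$. Since each atom has $\|a_Q^{(s,1)}\|_{B^s_{1,1}} \leq 1$ via its canonical single-term expansion, $|d_Q|\leq \|T\|/2$, so $\phi := \sum d_Q a_Q^{(-s,\infty)} \in B^{-s}_{\infty,\infty}$. For an arbitrary $\psi=\sum c_Q a_Q^{(s,1)}\in B^s_{1,1}$, absolute convergence of the series in the $B^s_{1,1}$-norm together with continuity of $T$ yield
$$T(\psi) \;=\; \sum_Q c_Q\, T(a_Q^{(s,1)}) \;=\; \sum_Q N_Q\, c_Q d_Q \;=\; \langle \phi,\psi\rangle.$$

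The main technical obstacle I anticipate is the \emph{uniqueness} of the Haar expansion of $\psi \in B^s_{1,1}$: both the norm $\sum|c_Q|$ and the recipe above for $d_Q$ must be independent of the chosen representation. I would handle this by exploiting the continuous inclusion $B^s_{1,1}\hookrightarrow L^1(m)$ together with the fact that $a_Q^{(-s,\infty)}\in L^\infty(m)$: passing the $L^1$-convergent atomic expansion $\psi=\sum c_Q a_Q^{(s,1)}$ through the bounded $L^1$--$L^\infty$ pairing and invoking orthogonality yields $\int \psi\, a_Q^{(-s,\infty)}\, dm = N_Q c_Q$, so $c_Q$ is intrinsically determined by $\psi$. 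This is the concrete manifestation of the Girardi--Sweldens unconditional Schauder basis property in the $B^s_{1,1}$ setting and is precisely what enables the duality $(\ell^1)^\star=\ell^\infty$ to transfer to these Besov spaces.
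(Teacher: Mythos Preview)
Your proposal is correct and is essentially the same approach as the paper's: both reduce the duality to the classical fact $(\ell^1)^\star=\ell^\infty$ via the Haar coefficient maps $\psi\mapsto(c_Q)_Q$ and $\phi\mapsto(d_Q)_Q$. The paper's proof is a single sentence---``$B^s_{1,1}$ is isometric to $\ell^1(\mathbb{N})$ (just map the unconditional Haar basis to the canonical basis of $\ell^1(\mathbb{N})$); now it is easy to see that its dual is $B^{-s}_{\infty,\infty}$''---whereas you unpack the argument in full, and in particular you choose to work with the \emph{integral} pairing $\int\phi\,\psi\,dm=\sum_Q N_Q c_Q d_Q$ rather than the pure coefficient pairing $\sum_Q c_Q d_Q$ implicit in the paper's isometry. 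Your choice is the natural one from the distributional viewpoint (it extends the $V_0$-pairing) but at the cost of the $N_Q$ weights, so your identification is an isomorphism rather than an isometry; the paper's abstract route gives an isometric identification directly but says nothing about how the dual pairing relates to integration. Your explicit treatment of uniqueness of the expansion (recovering $c_Q$ via $\int\psi\,a_Q^{(-s,\infty)}\,dm$) is exactly what the paper hides inside the word ``isometric''.
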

\begin{proof} Note that $B^s_{1,1}$ is isometric to $\ell^1(\mathbb{N})$ (just map the unconditional Haar basis to the canonical basis of $\ell^1(\mathbb{N})$). Now it is easy to see  that its dual is $B^{-s}_{\infty,\infty}$.

\end{proof} 

\subsection{Product spaces and  Anisotropic Besov Spaces}
Let $(I^+,m_+,\mathcal{P}_+)$ and $(I^-,m_-,\mathcal{P}_-)$ be measurable spaces with good grids. One  can consider the  measure space $I^+\times I^-$ endowed the good grid  $$\mathcal{P}_+\times \mathcal{P}_- = (\mathcal{P}_+^n\times \mathcal{P}_-^n)_n.$$
In particular we can consider test functions and distributions on $$(I^+\times I^-,m_+\times m_-,\mathcal{P}_+\times \mathcal{P}_-).$$ Let $s,t \in (0,1)$. We introduce the {\bf anisotropic Besov space} 
$$B^{s}_{1,1}(I^+)\otimes B^{-t}_{1,1}(I^-)$$
in the following way. A distribution $\psi$ belongs to this space if and only if it can be written as

\begin{equation}\label{atomic} \psi = \sum_{n \in \mathbb{N}} \sum_{P \in \mathcal{P}^n_+}\sum_{Q \in \mathcal{H}_P} \sum_{m \in \mathbb{N}} \sum_{R \in \mathcal{P}^m_-}\sum_{J \in \mathcal{H}_R}     c_{Q,J} a_Q b_J = \sum_{Q\in \mathcal{H}^+} \sum_{J\in \mathcal{H}^-} c_{Q,J} a_Q^+ b_J^-, \end{equation}
where $a_Q^+$ is the $(s,1)$-atom supported on $Q$ an $b_J^-$ is the $(-t,1)$-atom supported on $J$, and such that 
\begin{align*} &\Vert\psi\Vert_{B^{s}_{1,1}(I^+)\otimes B^{-t}_{1,1}(I^-)}\\
&= \sum_{n \in \mathbb{N}} \sum_{P \in \mathcal{P}^n_+}\sum_{Q \in \mathcal{H}_P} \sum_{m \in \mathbb{N}} \sum_{R \in \mathcal{P}^m_-}\sum_{J \in \mathcal{H}_R}     |c_{Q,J}| =\sum_{Q\in \mathcal{H}^+} \sum_{J\in \mathcal{H}^-} |c_{Q,J}|< \infty\end{align*}

We have that $$B^{s}_{1,1}(I^+)\otimes B^{-t}_{1,1}(I^-)$$ is a Banach space of distributions and 
$$\{a_Q^+b_J^-\}_{\substack{Q\in \mathcal{H}^+\\J\in \mathcal{H}^-}}$$
is an unconditional Schauder basis of it. Note that we can define pseudo-metrics $d_+$ on $I^+$ and $d_-$ on $I^-$ as in Remark \ref{metric}.

One can easily see that 
\begin{prop}\label{prodd}  If $a^+ \in B^{s}_{1,1}(I^+)$ and $b^-\in B^{-t}_{1,1}(I^-)$ then $$a^+b^- \in B^{s}_{1,1}(I^+)\otimes B^{-t}_{1,1}(I^-),$$ and
$$\Vert a^+b^-\Vert_{B^{s}_{1,1}(I^+)\otimes B^{-t}_{1,1}(I^-)}=\Vert a^+\Vert_{B^{s}_{1,1}(I^+)}\Vert b^-\Vert_{B^{-t}_{1,1}(I^-)}. 
$$
\end{prop}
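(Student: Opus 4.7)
The plan is to reduce the statement to a direct computation using the atomic (Haar) expansions, exploiting that $\{a_Q^+\}_{Q\in\mathcal{H}^+}$, $\{b_J^-\}_{J\in\mathcal{H}^-}$, and the product system $\{a_Q^+ b_J^-\}_{(Q,J)}$ are unconditional Schauder bases of $B^{s}_{1,1}(I^+)$, $B^{-t}_{1,1}(I^-)$, and $B^{s}_{1,1}(I^+)\otimes B^{-t}_{1,1}(I^-)$, respectively. First I would write the unique atomic expansions
$$a^+ = \sum_{Q\in\mathcal{H}^+} c_Q\, a_Q^+, \qquad b^- = \sum_{J\in\mathcal{H}^-} d_J\, b_J^-,$$
with $\sum_Q |c_Q| = \Vert a^+ \Vert_{B^{s}_{1,1}(I^+)}$ and $\sum_J |d_J| = \Vert b^- \Vert_{B^{-t}_{1,1}(I^-)}$.

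Next I would consider the formal double series
$$\Psi := \sum_{Q\in\mathcal{H}^+}\sum_{J\in\mathcal{H}^-} c_Q d_J\, a_Q^+ b_J^-,$$
which is absolutely convergent in the anisotropic Besov norm because
$$\sum_{Q,J} |c_Q d_J| = \Bigl(\sum_Q |c_Q|\Bigr)\Bigl(\sum_J |d_J|\Bigr) = \Vert a^+ \Vert_{B^{s}_{1,1}(I^+)} \Vert b^- \Vert_{B^{-t}_{1,1}(I^-)} < \infty.$$
Hence $\Psi \in B^{s}_{1,1}(I^+)\otimes B^{-t}_{1,1}(I^-)$ and $\Vert \Psi \Vert_{\otimes} \leq \Vert a^+ \Vert \Vert b^- \Vert$. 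To identify $\Psi$ with the product $a^+ b^-$, I would test it against tensor test functions $\alpha\otimes\beta$ (finite linear combinations of $(0,1)$-atom products), using the absolute convergence to permute sums and the duality pairings of the factors to obtain
$$\langle \Psi, \alpha\otimes\beta\rangle = \Bigl(\sum_Q c_Q \int a_Q^+ \alpha\, dm_+\Bigr)\Bigl(\sum_J d_J \langle b_J^-,\beta\rangle\Bigr) = \Bigl(\int a^+ \alpha\, dm_+\Bigr)\langle b^-,\beta\rangle,$$
which is the natural definition of the product distribution $a^+ b^-$ acting on tensor test functions. Since such $\alpha\otimes\beta$ are dense among the test functions on $I^+\times I^-$, this identifies $\Psi = a^+ b^-$ and gives the upper bound on the tensor norm.

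For the reverse inequality, I would invoke the unconditional Schauder basis property of $\{a_Q^+ b_J^-\}_{Q,J}$: the expansion of any element of $B^{s}_{1,1}(I^+)\otimes B^{-t}_{1,1}(I^-)$ in this basis is unique, and the norm is exactly the sum of the absolute values of the coefficients. Therefore the coefficients of $a^+b^-$ are precisely $c_Q d_J$, and
$$\Vert a^+ b^- \Vert_{B^{s}_{1,1}(I^+)\otimes B^{-t}_{1,1}(I^-)} = \sum_{Q,J} |c_Q d_J| = \Vert a^+ \Vert_{B^{s}_{1,1}(I^+)}\Vert b^- \Vert_{B^{-t}_{1,1}(I^-)}.$$
The only non-routine step is the identification of $\Psi$ with $a^+b^-$, because one factor is a function in $L^1(m_+)$ while the other is a genuine distribution: the potential obstacle is justifying that the natural interpretation of the product as a distribution on $I^+\times I^-$ is consistent with the atomic double series. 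This, however, is handled cleanly by the pairing computation above, since absolute convergence in the basis permits a Fubini-style rearrangement of the sums.
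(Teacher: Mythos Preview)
Your proposal is correct and is precisely the argument the paper has in mind: the paper does not give a proof but only says ``One can easily see that,'' and your computation with the atomic expansions and the uniqueness of coefficients in the unconditional Schauder basis $\{a_Q^+ b_J^-\}$ is exactly the intended routine verification.
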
 

The space of distributions $$B^{s}_{1,1}(I^+)\otimes B^{-t}_{1,1}(I^-)$$ is quite wide, however it is easy to see that 
\begin{prop} The push-forward by the projection $\pi_+$   of a distribution $\mu \in  B^{s}_{1,1}(I^+)\otimes B^{-t}_{1,1}(I^-)$  is a finite complex-valued Borelian measure that is absolutely continuous with respect to $m_+$ and 
$$\Big|\frac{d\pi_+^\star(\mu)}{dm_+}\Big|_{B^{s}_{1,1}(I^+)}\leq \Vert\mu\Vert_{B^{s}_{1,1}(I^+)\otimes B^{-t}_{1,1}(I^-)}.$$
\end{prop}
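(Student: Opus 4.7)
My approach pivots on one cancellation visible in the Haar structure: every $(-t,1)$-atom $b_J^-$ on $I^-$ with support strictly inside $I^-$ is a balanced Haar-type function
\[
b_J^- = |J|^{-t}\bigl(\tfrac{1_{L_J}}{|L_J|} - \tfrac{1_{R_J}}{|R_J|}\bigr),\qquad \int b_J^-\,dm_- = 0,
\]
while the root atom $b_{I^-}^- = 1_{I^-}$ integrates to $1$. When the $I^-$ coordinate is integrated out term by term in the anisotropic atomic expansion of $\mu$, only tensor factors indexed by $J = I^-$ will survive, and the surviving piece will be a genuine function on $I^+$ built from $(s,1)$-atoms.

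Concretely, I fix the unconditional Schauder basis representation
\[
\mu = \sum_{Q\in\mathcal{H}^+}\sum_{J\in\mathcal{H}^-} c_{Q,J}\, a_Q^+\, b_J^-,\qquad \sum_{Q,J}|c_{Q,J}| = \|\mu\|_{B^s_{1,1}(I^+)\otimes B^{-t}_{1,1}(I^-)},
\]
and define the candidate density
\[
\rho_+ := \sum_{Q\in\mathcal{H}^+} c_{Q,I^-}\, a_Q^+.
\]
Since $\sum_Q |c_{Q,I^-}| \leq \sum_{Q,J}|c_{Q,J}|$, the atomic definition of $B^s_{1,1}(I^+)$ immediately yields $\rho_+ \in B^s_{1,1}(I^+)$ with $\|\rho_+\|_{B^s_{1,1}(I^+)}\leq \|\mu\|_{B^s_{1,1}(I^+)\otimes B^{-t}_{1,1}(I^-)}$. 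The continuous embedding $B^s_{1,1}(I^+)\hookrightarrow L^1(m_+)$ then promotes $\rho_+\,m_+$ to a finite complex Borel measure absolutely continuous with respect to $m_+$.

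To identify $\pi_+^\star(\mu) = \rho_+\,m_+$, I pair both sides against an arbitrary H\"older continuous $\gamma$ on $I^+$. The uniform bound $|\int a_Q^+\gamma\,dm_+|\leq \|a_Q^+\|_{L^1(m_+)}\|\gamma\|_\infty = 2|Q|^s\|\gamma\|_\infty\leq 2\|\gamma\|_\infty$ combined with the vanishing of $\int b_J^-\,dm_-$ for $J\neq I^-$ makes the double sum
\[
\langle \mu,\gamma\otimes 1_{I^-}\rangle = \sum_{Q,J} c_{Q,J}\Bigl(\int a_Q^+\gamma\,dm_+\Bigr)\Bigl(\int b_J^-\,dm_-\Bigr)
\]
absolutely convergent and collapses the $J$-sum to $J = I^-$, producing $\sum_Q c_{Q,I^-}\int a_Q^+\gamma\,dm_+ = \int \rho_+\gamma\,dm_+$. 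Since H\"older functions determine finite complex Borel measures on the compact metric space $I^+$, this identifies $\pi_+^\star(\mu) = \rho_+\,m_+$.

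\textbf{Anticipated main obstacle.} The only nontrivial point is making the pairing $\langle \mu,\gamma\otimes 1_{I^-}\rangle$ well-defined, since $\gamma\otimes 1_{I^-}$ is not a test function in $V_0(I^+\times I^-)$ in the narrow atomic sense. I plan to extend the pairing by absolute convergence in the tensor atomic basis: $\gamma$ defines a bounded functional on $B^s_{1,1}(I^+)$ via integration (hence lies in $(B^s_{1,1}(I^+))^\star = B^{-s}_{\infty,\infty}(I^+)$), while $1_{I^-}$ lies in $B^t_{\infty,\infty}(I^-) = (B^{-t}_{1,1}(I^-))^\star$. The resulting tensor-product functional is bounded on $B^s_{1,1}(I^+)\otimes B^{-t}_{1,1}(I^-)$, and the uniform estimates above justify the term-by-term evaluation.
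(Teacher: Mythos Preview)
Your proposal is correct and matches the intended argument; the paper states this proposition without proof (``it is easy to see that''), and your observation that the $(-t,1)$-atoms $b_J^-$ with $J\neq I^-$ have vanishing $m_-$-integral, so that only the row $c_{Q,I^-}$ survives, is exactly the point. Your handling of the anticipated obstacle is also appropriate: $\gamma\otimes 1_{I^-}$ with $\gamma$ H\"older on $I^+$ falls under the evaluation criterion in Section~\ref{evaluation} (Theorem~\ref{main0}.v), so the term-by-term pairing is justified.
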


\begin{prop}\label{holdermult}  Suppose $0< s < t< 1$. Let $\psi \in B^{s}_{1,1}(I^+)\otimes B^{-t}_{1,1}(I^-)$ and $\rho\colon I^+\times I^-\rightarrow \mathbb{C}$ such that 
\begin{itemize}
\item[A.] $\rho \in L^\infty(m_+\times m_-)$,
\item[B.] There is $C$ and $\beta >0$ such that  for every $x, x_0\in  I^+$ and $y,y_0 \in I^-$ we have
\begin{align*} &|\rho(x,y)-\rho(x_0,y)|\leq Cd_+(x,x_0)^{2t},\\
&|\rho(x,y)-\rho(x,y_0)|\leq Cd_-(y,y_0)^{2t}.
\end{align*}
\end{itemize} 
Then $\rho \psi \in B^{s}_{1,1}(I^+)\otimes B^{-t}_{1,1}(I^-)$.
\end{prop}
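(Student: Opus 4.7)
The plan is to establish the uniform estimate
\[
\Vert \rho \cdot a_Q^+ b_J^- \Vert_{B^{s}_{1,1}(I^+) \otimes B^{-t}_{1,1}(I^-)} \leq M
\]
for every $Q \in \mathcal{H}^+$ and $J \in \mathcal{H}^-$, with $M$ depending only on $\Vert\rho\Vert_\infty$, on the constant from hypothesis B, and on $s, t$. Given this, the atomic representation (\ref{atomic}) together with the $\ell^1$ structure of the norm immediately yields $\Vert \rho \psi \Vert_{B^{s,-t}} \leq M \Vert \psi \Vert_{B^{s,-t}}$ by the triangle inequality, proving the proposition.

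To obtain the uniform bound, fix $Q, J$, choose reference points $x_Q \in Q$ and $y_J \in J$, and decompose $\rho(x,y) = A + U(x) + V(y) + W(x,y)$ with $A = \rho(x_Q, y_J)$, $U(x) = \rho(x, y_J) - A$, $V(y) = \rho(x_Q, y) - A$, and $W(x,y) = \rho(x,y) - \rho(x, y_J) - \rho(x_Q, y) + A$ the mixed difference. Denoting by $K$ the constant from hypothesis B, the hypotheses imply $U \in C^{2t}(I^+, d_+) = B^{2t}_{\infty,\infty}(I^+)$ and $V \in B^{2t}_{\infty,\infty}(I^-)$ with Hölder seminorms bounded by $K$, while $W$ vanishes on the cross-sections $\{x_Q\} \times I^-$ and $I^+ \times \{y_J\}$, is separately $2t$-Hölder in each variable, and satisfies the bi-Hölder bound
\[
|W(x,y)| \leq 2K \min\!\big(d_+(x,x_Q)^{2t},\, d_-(y,y_J)^{2t}\big).
\]
The three separable pieces are handled by Proposition \ref{prodd} combined with one-dimensional multiplier estimates: $|A| \leq \Vert\rho\Vert_\infty$; $\Vert U(x) a_Q^+(x) b_J^-(y) \Vert_{B^{s,-t}} = \Vert U a_Q^+ \Vert_{B^s_{1,1}(I^+)}$ is uniformly bounded because $2t > s$ ensures that $B^{2t}_{\infty,\infty}(I^+)$ embeds into the multipliers of $B^s_{1,1}(I^+)$ (a standard atomic Besov estimate, cf.\ \cite{smania2022besov}); and the $V$ piece is treated symmetrically via the dual multiplier inclusion of $B^{2t}_{\infty,\infty}(I^-)$ into the multipliers of $B^{-t}_{1,1}(I^-)$, valid since $2t > t$.

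The main obstacle is the non-separable remainder $W$. The strategy is a direct expansion of $W \cdot a_Q^+ b_J^-$ in the product Haar basis $\{a_{Q'}^+ b_{J'}^-\}_{Q' \in \mathcal{H}^+, J' \in \mathcal{H}^-}$, testing against a biorthogonal system and using the mixed-difference bound to force geometric decay of the coefficients in both scales simultaneously. The vanishing of $W$ on the two cross-sections through $(x_Q, y_J)$, combined with the bi-Hölder bound, provides exactly the right control, and the strict inequalities $2t > s$ and $2t > t$ make the resulting double series over $(Q', J')$ summable with constants depending only on $\rho$, $s$, $t$, and the grid parameters $\lambda_1, \lambda_2$. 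Equivalently, one may first establish once a general bi-Hölder multiplier theorem for the anisotropic space $B^{s,-t}$---essentially the tensor product of the two one-dimensional multiplier statements above---and apply it to $W$.
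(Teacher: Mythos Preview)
Your approach is correct and, modulo bookkeeping, coincides with the paper's: both reduce to a uniform bound on $\|\rho\, a_Q^+ b_J^-\|_{B^{s}_{1,1}(I^+)\otimes B^{-t}_{1,1}(I^-)}$, expand in the product Haar basis, and control the coefficients via the second-difference estimate
\[
|(\rho(x,y)-\rho(x,y_0))-(\rho(x_0,y)-\rho(x_0,y_0))|\leq C\min\{d_+(x,x_0)^{2t},d_-(y,y_0)^{2t}\}\leq C\,d_+(x,x_0)^{t}d_-(y,y_0)^{t}.
\]
Your decomposition $\rho=A+U+V+W$ is a repackaging of the paper's four-case split (its Cases 4, 2, 3, 1 correspond to $A,U,V,W$ respectively), with the separable pieces $U,V$ delegated to one-dimensional multiplier statements rather than computed in place.

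One point in the $W$ sketch needs sharpening. The bound $|W(x,y)|\leq 2K\min(d_+(x,x_Q)^{2t},d_-(y,y_J)^{2t})$ is centered at the \emph{fixed} reference $(x_Q,y_J)$ and is therefore constant across the support $Q\times J$; by itself it yields no decay in the running scales $|Q'|_+,|J'|_-$, and the double sum over $(Q',J')$ would diverge. The mechanism that actually works---and what the paper carries out explicitly in its Case~1---is Haar cancellation: for $Q'\neq Q$, $J'\neq J$ one has $\int a_Q^+a_{Q'}^+\,dm_+=\int b_J^-b_{J'}^-\,dm_-=0$, so in the coefficient integral one may replace $W(x,y)$ by $W(x,y)-W(x_0,y)-W(x,y_0)+W(x_0,y_0)$ with $(x_0,y_0)\in(Q\cap Q')\times(J\cap J')$ \emph{varying with} $(Q',J')$, and then the separate $2t$-H\"older bounds on $W$ (which you did record) give decay in $|Q\cap Q'|_+^{t}$ and $|J\cap J'|_-^{t}$. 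So ``the vanishing on the two cross-sections'' is not the operative ingredient here; the moving recentering plus the H\"older regularity is. With that correction your outline matches the paper's argument.
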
 

We postpone the proof of Proposition \ref{holdermult} to Section \ref{ultima}.

\subsection{Evaluation of distributions}  \label{evaluation} Distributions in $ B^{s}_{1,1}(I^+)\otimes B^{-t}_{1,1}(I^-)$ can be evaluated on functions in $I^+\times I^-$ that are regular enough. Let $r\in (1,\infty]$ such that 
$$s-1+\frac{1}{r}=0$$
and let $r^\star \in (1,\infty]$ be defined by 
$$\frac{1}{r}+\frac{1}{r^\star}=1.$$

Given a function
$$\gamma\colon I^+ \times I^-\rightarrow \mathbb{C}$$
such that there is $K(\gamma)$ satisfying 
$$||\gamma(\cdot,y)||_{L^{r^\star}(I^+)}\leq K(\gamma)$$
and
$$||\gamma(\cdot,y)- \gamma(\cdot,y_0)||_{L^{r^\star}(I^+)}\leq K(\gamma) d_-(y,y_0).$$
for every $y,y_0\in I^-$ and $x\in I^+$. Then since $B^s_{1,1}(I^+)\subset L^{r^\star}(I^+)$ we have that 
$$y\mapsto \int \gamma(x,y)a^+(x)\ dm_+(x)$$
 is $t$-H\"older continuous with respect to the metric $d_-$ on its domain, and  since $B^{-t}_{1,1}(I^-)^\star=C^t(I^-,d_-)$  we can  estimate
 $$\bigg| \int b^-_J(y) \int \gamma(x,y)a^+_Q(x) \ dm_+(x) dm_-(y)\bigg|\leq K(\theta).$$
 In particular we can evaluate   $\mu \in B^{s}_{1,1}(I^+)\otimes B^{-t}_{1,1}(I^-)$ on every  such function $\gamma$ and get the estimate
 $$|\mu(\gamma)|\leq \Vert\mu\Vert_{B^{s}_{1,1}(I^+)\otimes B^{-t}_{1,1}(I^-)} K(\gamma).$$

Since \((B^s_{1,1})^\star = B^{-s}_{\infty,\infty}\), we can generalize further. If $$\gamma: I^- \rightarrow B^{-s}_{\infty,\infty}(I^+)$$ is \(t\)-H\"older continuous with respect to the metric \(d_-\) on \(I^-\) and the norm \(B^{-s}_{\infty,\infty}\) on the codomain, then \(\mu\) can also be evaluated on \(\gamma\) in a similar manner.

\section{Transfer operator of the bilateral shift}\label{transfer}

We will study the statistical properties of the Gibbs state $\nu$ for a H\"older potential $\phi$. To this end {\it we fix, for the rest of this work}, a H\"older potential $\phi_+\colon I^+\rightarrow \mathbb{R}$ as in (\ref{coho}). Let $|\cdot|_+$ be the reference measure $m_+=m_+^{\phi_+}$ for the potential $\phi_+$ and unilateral shift $\sigma_+$. As we saw in Remark \ref{gibbs} we have that $(I^\theta,m_\theta^{\psi_\theta},\mathcal{C}_\theta)$, with $\theta\in \{+,-\}$, is a measure space with a good grid.

Secondly, we chose an {\bf arbitrary} H\"older potential $\psi_-\colon I^-\rightarrow \mathbb{R}$, and denote by  $|\cdot|_-$ be the reference measure $m_-=m_-^{\psi_-}$ for the potential $\psi_-$ and unilateral shift $\sigma_-$.

We {\it could}, for instance, pick $\psi_-=\phi_-$, where $\phi_-$ is as in  (\ref{coho}), however the fact that one can take an arbitrary $\psi_-$ makes our results far stronger.

It is  convenient  to choose metrics $d_\theta$ on $I^\theta$, with $\theta\in \{+,-\}$  that reflects nicely   measure-theoretical features  of $(I^\theta,m_\theta^{\psi_\theta},\mathcal{C}_\theta)$. 
We define $$d_\theta(x,y)=d_{I^\theta,m_\theta^{\psi_\theta},\mathcal{C}_\theta}(x,y).$$
Here $d_{I^\theta,m_\theta^{\psi_\theta},\mathcal{C}_s}$ is the pseudo-metric (that in the present setting {\it is } indeed a metric)  defined in Remark \ref{metric}. 

Note that $\phi_+\in B^{s_+}_{\infty,\infty}(I^+)$ for some $s_+> 0$, and $\psi_-\in B^{t_-}_{\infty,\infty}(I^-)$ for some $t_- > 0$.

One can see that if $\mu << m_+\!\!\times m_-$ then $\sigma^\star\mu << m_+\!\!\times m_-.$ Indeed for $h \in L^1(m_+\!\!\times m_-)$ we have
$$\sigma^\star (h \ m_+\!\!\times m_-) = (\mathcal{L}h) \ m_+\!\!\times m_-,$$
where $\mathcal{L}$ is the transfer operator

$$\mathcal{L}h (x,y)=\exp\left(\phi_+(\sigma_{+,\pi_{-1}(y)}^{-1}(x))-\psi_-(y)\right)h(\sigma_{+,\pi_{-1}(y)}(x),\sigma_-(y)).    $$

\subsection{Action on atoms} Due the atomic representation (\ref{atomic}), to study the action of $\mathcal{L}$ on $B^{s}_{1,1}(I^+)\otimes B^{-t}_{1,1}(I^-)$ it is enough to study the action on atoms, that have the form

\begin{equation}h(x,y)= a_Q(x) b_J(y), \end{equation}

To this end, we have

\begin{lemma} Let $a^+\colon I^+\rightarrow \mathbb{C}$
 be a $m_+$-integrable function and
 $b^-\colon I^-\rightarrow \mathbb{C}$
be a $m_-$-integrable function. Suppose that 
\begin{align*} 
supp \ a^+ \subset C_+(\hat{x}_0).
\end{align*}
Then 
$$\mathcal{L}(a^+ b^-)(x,y)=A^+(x)B^-(y),$$ 
where
\begin{align*} A^+(x)=&\exp\left(\phi_+(\sigma_{+,\hat{x}_0}^{-1}(x))\right)a^+(\sigma_{+,\hat{x}_0}^{-1}(x)), \\
B^-(y)=&\exp\left(-\psi_-(y)\right)b^-(\sigma_-(y))1_{C_-(\hat{x}_0)}(y).
\end{align*}
Additionally
\begin{itemize} 
\item[I.] If $$supp \ b^- \subset  R=C_-(\hat{y}_{-m},\dots,\hat{y}_{-1}) \in \mathcal{C}_-$$ 
we have 
\begin{align*} &supp \ B^- \subset  \sigma_{-,\hat{x}_0}^{-1}(R)=C_-(\hat{y}_{-m},\dots,\hat{y}_{-1},\hat{x}_0) \in \mathcal{C}_-
\end{align*}
\item[II.] If  
\begin{align*}
&supp \ a^+ \subset P=C_+(\hat{x}_0,\hat{x}_1,\dots,\hat{x}_k)\in \mathcal{C}_+\end{align*}
then 
$$supp \ A^+ \subset \sigma_{+}(P)=C_+(\hat{x}_1,\dots,\hat{x}_k)\in \mathcal{C}_+.
$$
\end{itemize}
\end{lemma}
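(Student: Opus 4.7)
The strategy is a direct computation using the explicit formula for $\mathcal{L}$ recalled at the start of Section \ref{transfer}. Substituting $h(x,y) = a^+(x)b^-(y)$ gives
\[ \mathcal{L}(a^+b^-)(x,y) = \exp\!\bigl(\phi_+(\sigma_{+,\pi_{-1}(y)}^{-1}(x)) - \psi_-(y)\bigr)\, a^+(\sigma_{+,\pi_{-1}(y)}^{-1}(x))\, b^-(\sigma_-(y)). \]
In this expression the inverse branch $\sigma_{+,\pi_{-1}(y)}^{-1}$ depends on $y$ through the entry $\pi_{-1}(y)$, which would ostensibly prevent the result from splitting as a tensor product. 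The mechanism that saves us is the support hypothesis on $a^+$.

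The key observation is that the factor $a^+\bigl(\sigma_{+,\pi_{-1}(y)}^{-1}(x)\bigr)$ forces $\sigma_{+,\pi_{-1}(y)}^{-1}(x) \in C_+(\hat{x}_0)$. Unfolding the definition of the inverse branch, $\sigma_{+,\pi_{-1}(y)}^{-1}(x) = (\pi_{-1}(y), x_0, x_1, \dots)$, so its $0$-th coordinate equals precisely $\pi_{-1}(y)$. Hence non-vanishing requires $\pi_{-1}(y) = \hat{x}_0$, that is, $y \in C_-(\hat{x}_0)$. Restricted to this cylinder, $\sigma_{+,\pi_{-1}(y)}^{-1}$ equals the fixed branch $\sigma_{+,\hat{x}_0}^{-1}$, independent of $y$, and we can freely insert the indicator $1_{C_-(\hat{x}_0)}(y)$. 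The expression then factors cleanly into the advertised product $A^+(x)\,B^-(y)$.

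For (I) and (II), the proof reduces to bookkeeping with coordinate entries and the definitions of $\sigma_-$ and of the inverse branch $\sigma_{-,\hat{x}_0}^{-1}$ given in Section~2. For (I), $B^-(y) \ne 0$ demands both $y \in C_-(\hat{x}_0)$ and $\sigma_-(y) \in R$; since $\sigma_-$ deletes the entry at position $-1$, the second condition fixes the entries of $y$ at positions $-(m+1),\dots,-2$, and combined with $y_{-1} = \hat{x}_0$ this identifies $y$ as belonging to $C_-(\hat{y}_{-m},\dots,\hat{y}_{-1},\hat{x}_0) = \sigma_{-,\hat{x}_0}^{-1}(R)$. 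For (II), $A^+(x) \ne 0$ requires $(\hat{x}_0, x_0, x_1, \dots) \in P$, which after shifting indices gives $x \in C_+(\hat{x}_1,\dots,\hat{x}_k) = \sigma_+(P)$. There is no serious obstacle here; the substantive content of the lemma is the observation that the $y$-dependence of the inverse branch in $\mathcal{L}$ collapses to a mere cylinder constraint once one restricts to the support of $a^+$, which is precisely the mechanism that allows the transfer operator to act ``separably'' on such factored functions and drives the anisotropic analysis of later sections.
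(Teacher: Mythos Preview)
Your proof is correct and is precisely the direct computation one would expect; the paper itself states the lemma without proof, treating it as immediate from the explicit formula for $\mathcal{L}$ and the skew-product expression $\sigma^{-1}(x,y)=(\sigma_{+,\pi_{-1}(y)}^{-1}(x),\sigma_-(y))$. Your identification of the key mechanism---that the support hypothesis on $a^+$ forces $\pi_{-1}(y)=\hat{x}_0$ and thereby collapses the $y$-dependent inverse branch to a fixed one---is exactly the point, and the support claims (I) and (II) follow by the coordinate bookkeeping you describe.
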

A recursive argument gives us

\begin{prop}\label{prod2} Let $a^+\colon I^+\rightarrow \mathbb{C}$
be a $m_+$-integrable function and
 $b^-\colon I^-\rightarrow \mathbb{C}$
be a $m_-$-integrable function. Suppose that 
\begin{align*} &supp \ b^- \subset  R=C_-(\hat{y}_{-m},\dots,\hat{y}_{-1}) \in \mathcal{C}_-,\\
&supp \ a^+ \subset P=C_+(\hat{x}_0,\hat{x}_1,\dots,\hat{x}_k)\in \mathcal{C}_+
\end{align*}
Then for $\ell\leq k$ we have
$$\mathcal{L}^\ell(a^+ b^-)(x,y)=A^+(x)B^-(y),$$ 
where
\begin{align*} A^+(x)=&\exp\left(\sum_{j=0}^{\ell-1} \phi_+(\sigma_{+,\hat{x}_j,\dots,\hat{x}_{\ell-1}}^{-1}(x))\right)a^+(\sigma_{+,\hat{x}_0,\dots,\hat{x}_\ell}^{-1}(x)), \\
B^-(y)=&\exp\left(-\sum_{j=0}^{\ell-1} \psi_-(\sigma_-^j (y))\right)b^-(\sigma_-^\ell(y))1_{\sigma^{-1}_{-,\hat{x}_0,\dots,\hat{x}_{\ell-1}}(R)}(y).\end{align*} 
Moreover 
\begin{itemize}
\item[I.] For $\ell\leq k $ we have
\begin{align*}
&supp \ A^+ \subset \sigma_+^\ell (P)=C_+(\hat{x}_{\ell},\dots,\hat{x}_k)\in \mathcal{C}_+
\end{align*}
\item[II.] For $\ell\leq k+1$ we have 
\begin{align*}
&supp \ B^+ \subset \sigma^{-1}_{-,\hat{x}_0,\hat{x}_1,\dots,\hat{x}_{\ell-1}}(R)=C_-(\hat{y}_{-m},\hat{y}_{-(m-1)},\dots, \hat{y}_{-1},\hat{x}_{0},\dots,\hat{x}_{\ell-1})\in \mathcal{C}_-
\end{align*}
\end{itemize}
\end{prop}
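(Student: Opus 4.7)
The plan is induction on $\ell$, with the preceding lemma supplying the base case $\ell = 1$. Assume the formula $\mathcal{L}^\ell(a^+ b^-) = A^+_\ell \cdot B^-_\ell$ holds as stated for some $1 \leq \ell \leq k-1$; I wish to deduce the analogous identity at level $\ell+1$. The support clause of the inductive hypothesis says $\mathrm{supp}\, A^+_\ell \subset \sigma_+^\ell(P) = C_+(\hat{x}_\ell, \dots, \hat{x}_k) \subset C_+(\hat{x}_\ell)$, so the hypotheses of the preceding lemma are satisfied by the product $A^+_\ell \cdot B^-_\ell$, with the single symbol $\hat{x}_\ell$ now playing the role that $\hat{x}_0$ played there. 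Applying that lemma yields $\mathcal{L}^{\ell+1}(a^+ b^-)(x,y) = A^+_{\ell+1}(x)\, B^-_{\ell+1}(y)$ with
\[
A^+_{\ell+1}(x) = \exp\bigl(\phi_+(\sigma_{+,\hat{x}_\ell}^{-1}(x))\bigr)\, A^+_\ell(\sigma_{+,\hat{x}_\ell}^{-1}(x)), \qquad B^-_{\ell+1}(y) = \exp(-\psi_-(y))\, B^-_\ell(\sigma_-(y))\, 1_{C_-(\hat{x}_\ell)}(y).
\]

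What remains is to match these against the closed forms at level $\ell+1$, which is a matter of two elementary composition identities:
\[
\sigma_{+,\hat{x}_j,\dots,\hat{x}_{\ell-1}}^{-1} \circ \sigma_{+,\hat{x}_\ell}^{-1} = \sigma_{+,\hat{x}_j,\dots,\hat{x}_\ell}^{-1} \quad (0 \leq j \leq \ell-1), \qquad \sigma_{-,\hat{x}_\ell}^{-1} \circ \sigma^{-1}_{-,\hat{x}_0,\dots,\hat{x}_{\ell-1}} = \sigma^{-1}_{-,\hat{x}_0,\dots,\hat{x}_\ell}.
\]
The first identity converts the Birkhoff-type sum of length $\ell$ that appears inside $A^+_\ell$ (once evaluated at $\sigma_{+,\hat{x}_\ell}^{-1}(x)$) into one of length $\ell+1$, after absorbing the extra $\phi_+(\sigma_{+,\hat{x}_\ell}^{-1}(x))$ term, and simultaneously rewrites $a^+(\sigma_{+,\hat{x}_0,\dots,\hat{x}_{\ell-1}}^{-1}(\sigma_{+,\hat{x}_\ell}^{-1}(x)))$ as $a^+(\sigma_{+,\hat{x}_0,\dots,\hat{x}_\ell}^{-1}(x))$. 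On the $B^-$ side, the shift relation $b^-(\sigma_-^\ell(\sigma_-(y))) = b^-(\sigma_-^{\ell+1}(y))$, the re-indexing $\psi_-(\sigma_-^j(\sigma_-(y))) = \psi_-(\sigma_-^{j+1}(y))$, together with the observation that $\sigma_-^{-1}$ restricted to $C_-(\hat{x}_\ell)$ coincides with the inverse branch $\sigma_{-,\hat{x}_\ell}^{-1}$ and the second identity above, combine to yield $1_{\sigma^{-1}_{-,\hat{x}_0,\dots,\hat{x}_{\ell-1}}(R)}(\sigma_-(y))\, 1_{C_-(\hat{x}_\ell)}(y) = 1_{\sigma^{-1}_{-,\hat{x}_0,\dots,\hat{x}_\ell}(R)}(y)$, completing the match at level $\ell+1$.

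Finally, the support statement (I) for $\ell+1 \leq k$ is immediate from the lemma applied to $A^+_\ell$, giving $\sigma_+(\sigma_+^\ell(P)) = \sigma_+^{\ell+1}(P)$. For (II) at the boundary $\ell = k+1$ the closed form for $A^+$ degenerates, but we still obtain the desired support of $B^-_{k+1}$ by one extra application of the lemma to the formula at level $k$: that lemma requires only $\mathrm{supp}\, A^+_k \subset C_+(\hat{x}_k)$, which holds, and its $B^-$-support conclusion together with the second identity above delivers $\mathrm{supp}\, B^-_{k+1} \subset \sigma^{-1}_{-,\hat{x}_0,\dots,\hat{x}_k}(R)$. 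The main obstacle throughout is purely notational bookkeeping with the multi-indices labelling the inverse branches and the opposing conventions for $\sigma_+^{-1}$ (prepending symbols on the left) versus $\sigma_-^{-1}$ (appending symbols on the right); no analytic difficulty arises.
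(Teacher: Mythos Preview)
Your proof is correct and takes exactly the approach the paper intends: the paper simply writes ``A recursive argument gives us'' before stating the proposition, and your induction on $\ell$ using the preceding lemma is precisely that recursive argument spelled out in full. The bookkeeping with the multi-index composition identities is all straightforward, as you note.
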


\subsection{ Relation between bilateral and unilateral operators}
Given a potential  $\psi_s\colon I^s\rightarrow \mathbb{R}$ and a finite word 
$\hat{x}_0,\dots,\hat{x}_{\ell-1}$ denote 
$$\psi_{s,\hat{x}_0,\dots,\hat{x}_{\ell-1}}^\ell(x)=\sum_{j=0}^{\ell-1} \psi_s(\sigma_{s,\hat{x}_j,\dots,\hat{x}_{\ell-1}}^{-1}(x))=\sum_{j=0}^{\ell-1} \psi_s(\sigma_{s}^{j}(\sigma_{s,\hat{x}_0,\dots,\hat{x}_{\ell-1}}^{-1}(x))). $$

Given $s\in \{+,-\}$ and $\psi_s\colon I^s\rightarrow \mathbb{R}$ and a word $\omega=\hat{x}_0,\hat{x}_1,\dots,\hat{x}_{\ell-1}$ define
\begin{align*} \mathcal{L}_{\psi_s,\sigma_s,\omega} h(x) &= e^{\psi_{s,\omega}^k(x)}h(\sigma_{s,\omega}^{-1}(x)).
\end{align*} 
Note that 
\begin{align*} \mathcal{L}_{\psi_s,\sigma_s}^\ell h(x) &= \sum_{x_0,x_1,\dots,x_{\ell-1}} \mathcal{L}_{\psi_s,\sigma_s,x_0,x_1,\dots,x_{\ell-1}} h(x),
\end{align*}
Moreover
$$\mathcal{L}_{\psi_s,\sigma_s,\omega}\colon L^1(m_s,C^s_\omega)\rightarrow L^1(m_s,I^s)$$
is an isometry and its inverse $\mathcal{L}_{\psi_s,\sigma_s,\omega}^{-1}$ is
$$\mathcal{L}_{\psi_s,\sigma_s,\omega}^{-1}h(x)=\exp\left(-\sum_{j=0}^{\ell-1} \psi_s(\sigma_s^j(x))h(\sigma_s^\ell(x))\right)1_{C^s_\omega}(x).$$

\begin{cor}\label{prod} Let $a^+\colon I^+\rightarrow \mathbb{C}$
be a $m_+$-integrable function and
 $b^-\colon I^-\rightarrow \mathbb{C}$
be a $m_-$-integrable function. 
For every $\ell$ we have
\begin{align*} \mathcal{L}^\ell(a^+ b^-)(x,y) =\sum_{\omega\colon |\omega|=\ell }(\mathcal{L}_{\psi_-,\sigma_-,\omega}^{-1}b^-)(y)(\mathcal{L}_{\psi_+,\sigma_+,\omega}a^+)(x).&
\end{align*} 
\end{cor}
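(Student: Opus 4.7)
The plan is to deduce the identity from Proposition \ref{prod2} by decomposing $a^+$ and $b^-$ into pieces supported on sufficiently deep cylinders. Since both sides are bilinear in $(a^+,b^-)$ and $\mathcal{L}^\ell$ is $L^1$-bounded, it suffices to verify the formula after writing $a^+ = \sum_P a^+ 1_P$, where $P$ ranges over the partition of $I^+$ into cylinders of length $\ell+1$ (so that $P=C_+(\hat{x}_0,\dots,\hat{x}_\ell)$ has $k=\ell$, satisfying the hypothesis $\ell\le k$ of Proposition \ref{prod2}), together with $b^- = \sum_R b^- 1_R$ over cylinders $R = C_-(\hat{y}_{-1})$ of length $1$ in $I^-$. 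Both sums converge in $L^1$, so they commute with $\mathcal{L}^\ell$.

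\textbf{Key steps.} Apply Proposition \ref{prod2} to each pair $(a^+ 1_P,\, b^- 1_R)$ and regroup the double sum $\sum_{P,R}$ according to the word $\omega = (\hat{x}_0,\dots,\hat{x}_{\ell-1})$ formed from the first $\ell$ coordinates of $P$. For fixed $\omega$ the exponential weight $\exp(\phi^\ell_{+,\omega}(x))$ and the cocycle $\exp(-\sum_{j<\ell}\psi_-(\sigma_-^j y))$ are independent of the last coordinate $\hat{x}_\ell$ of $P$ and of $R$. Summing over $\hat{x}_\ell$ produces $\sum_{\hat{x}_\ell} 1_{C_+(\omega,\hat{x}_\ell)}(\sigma_{+,\omega}^{-1}(x)) = 1$, since $\sigma_{+,\omega}^{-1}(x)$ already lies in $C_+(\omega)$; combined with the $\phi_+$-cocycle this reassembles the $x$-factor into $\mathcal{L}_{\psi_+,\sigma_+,\omega}a^+(x)$. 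On the $y$-side, summing over $R$ yields $\sum_R 1_R(\sigma_-^\ell y)\, 1_{\sigma_{-,\omega}^{-1}(R)}(y) = 1_{C_-(\omega)}(y)$, which together with the $\psi_-$-cocycle and the factor $b^-(\sigma_-^\ell y)$ is exactly $\mathcal{L}_{\psi_-,\sigma_-,\omega}^{-1}b^-(y)$. Summing the resulting product over $\omega$ gives the claimed formula.

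\textbf{Main obstacle.} The computational work is routine once the right grouping has been chosen; the one step that requires real care is the collapse $\sum_R 1_R(\sigma_-^\ell y)\,1_{\sigma_{-,\omega}^{-1}(R)}(y) = 1_{C_-(\omega)}(y)$, which is responsible for producing the indicator $1_{C_-(\omega)}$ built into the definition of $\mathcal{L}_{\psi_-,\sigma_-,\omega}^{-1}$. This reduces to verifying $C_-(\omega) = \bigsqcup_R \sigma_{-,\omega}^{-1}(R)$, which in turn follows directly from the bijectivity of $\sigma_{-,\omega}^{-1}\colon I^-\to C_-(\omega)$ and from the fact that the length-$1$ cylinders $R$ partition $I^-$. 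With this identity in hand, the remainder of the argument is a mechanical rewriting of the formula in Proposition \ref{prod2} using the definitions of $\mathcal{L}_{\psi_+,\sigma_+,\omega}$ and $\mathcal{L}_{\psi_-,\sigma_-,\omega}^{-1}$.
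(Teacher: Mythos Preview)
Your argument is correct and is precisely the kind of deduction the paper has in mind: the corollary is stated immediately after Proposition \ref{prod2} with no proof, so the intended justification is exactly ``localize $a^+$ to a deep enough cylinder, apply Proposition \ref{prod2}, and sum.'' Your handling of the regrouping and of the identity $\bigsqcup_R \sigma_{-,\omega}^{-1}(R)=C^-_\omega$ is fine.

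One small streamlining you may wish to note: the decomposition of $b^-$ over length-$1$ cylinders $R$ is only needed because Proposition \ref{prod2}, as stated, asks for $\operatorname{supp} b^-\subset R\in\mathcal{C}_-$; the formula itself (and the underlying Lemma for $\ell=1$) does not actually use this hypothesis, so one could equally well apply the formula with $R=I^-$, in which case $1_{\sigma_{-,\omega}^{-1}(R)}=1_{C^-_\omega}$ and the $y$-factor is already $\mathcal{L}_{\psi_-,\sigma_-,\omega}^{-1}b^-(y)$ without any further summation. Equivalently, one may simply observe that for each fixed $y$ exactly one word $\omega$ of length $\ell$ satisfies $y\in C^-_\omega$, and for that $\omega$ the single nonzero summand on the right equals $\mathcal{L}^\ell(a^+b^-)(x,y)$ by the explicit description of $\sigma^{-\ell}(x,y)$. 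Either way, the substance of your proof is the same as the paper's.
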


The following propositions are proven in Section \ref{expand}.

\begin{prop}[Operator $\mathcal{L}_{\sigma_+}$ acting on $B^s_{1,1}$] \label{lem2} Let $s < 1$ satisfying $s_+> s$. There is $C > 0$ such that the following holds. 
Let $P$ be an union of children of $C^+(\omega)$, with $\omega=x_0x_1\dots x_{\ell-1}$. Then for every $j\leq \ell$ we have 
\begin{equation}\label{eq1} \bigg\Vert\mathcal{L}_{+,\phi_+}^j \left( |P|_+^s \frac{1_P}{|P|_+}\right) \bigg\Vert_{B^s_{1,1}(I^+)}\leq C \frac{|P|^s_+}{|\sigma_+^j(P)|_+^{s}}.\end{equation}
In particular for  every $$P\in \mathcal{H}^+_J$$
and $J=C^+_{x_0x_1\dots x_{\ell-1}}$ we have
\begin{equation} \label{eq2}\Vert\mathcal{L}_{\phi_+,\sigma_+,x_0x_1\dots x_{j-1}}a^+_P\Vert_{B^{s}_{1,1}}\leq C_2 \frac{|P|_+^s}{|\sigma_+^j(P)|_+^s} ,\end{equation} 
where $a^+_P$ is the $(s,1)$-atom of $P$.
\end{prop}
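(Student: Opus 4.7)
My plan is to prove \eqref{eq1} via an explicit formula plus a Haar atomic decomposition, and then deduce \eqref{eq2} as a short corollary.

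\textbf{Step 1 (explicit formula).} Since $P\subset C^+(\omega)$ with $|\omega|=\ell\ge j$, there is a single inverse branch $\sigma_{+,\omega_j}^{-1}$ defined over $W:=\sigma_+^j(P)$, where $\omega_j=x_0\cdots x_{j-1}$. Proposition \ref{prod2} therefore gives
$$ f:=\mathcal{L}_{+,\phi_+}^j\!\left(|P|_+^s\,\tfrac{1_P}{|P|_+}\right)=|P|_+^{s-1}\,e^{S_j\phi_+\circ\sigma_{+,\omega_j}^{-1}}\,1_W. $$
The Bowen bounded-distortion property for the H\"older potential $\phi_+$ on the $n$-expanding shift yields $e^{S_j\phi_+\circ\sigma_{+,\omega_j}^{-1}}\asymp |P|_+/|W|_+$ uniformly on $W$, together with the relative H\"older estimate
$$ \big|e^{S_j\phi_+(y)}-e^{S_j\phi_+(y')}\big|\;\le\; C\,\frac{|P|_+}{|W|_+}\,d_+(\sigma_+^jy,\sigma_+^jy')^{s_+}, $$
obtained by telescoping and exploiting the geometric contraction of the inverse branch together with $\phi_+\in B^{s_+}_{\infty,\infty}(I^+)=C^{s_+}(I^+,d_+)$. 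Consequently $|f(x)-f(x')|\le C\,(|P|_+^s/|W|_+)\,d_+(x,x')^{s_+}$ on $W$.

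\textbf{Step 2 (Besov estimate).} Write $f=f_0+f_1$ with $f_0:=(|P|_+^s/|W|_+)\,1_W$. Proposition \ref{pos} yields directly $\|f_0\|_{B^s_{1,1}}\le C\,|P|_+^s|W|_+^{-s}$, which is already the target bound. For the remainder $f_1$, expand in the unconditional Haar basis: for each $Q=(L_Q,R_Q)\in\mathcal{H}^+$ with $L_Q\cup R_Q\subset W$, the coefficient $c_Q$ is, up to the $|Q|_+^s$-normalization of the atom, comparable to $|Q|_+^{1-s}$ times the average difference of $f_1$ on $L_Q$ and $R_Q$; by the H\"older estimate this is bounded by $C\,(|P|_+^s/|W|_+)\,|Q|_+^{1-s+s_+}$. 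Grouping atoms by their depth $n$ below $W$ and using the good-grid geometric ratios $\lambda_1\le|Q|_+/|\mathrm{parent}|_+\le\lambda_2<1$, the contribution from level $n$ is $O\!\left(|P|_+^s(\lambda_2^n|W|_+)^{s_+-s}\right)$, which sums to $O(|P|_+^s|W|_+^{s_+-s})\le C|P|_+^s|W|_+^{-s}$ precisely because $s<s_+$. Finitely many atoms straddling $\partial W$ give a bounded correction handled via $\|f\|_\infty$ together with Proposition \ref{pos}.

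\textbf{Step 3 and main obstacle.} The atom $a_P^+=|P|_+^s(|L_P|_+^{-1}\,1_{L_P}-|R_P|_+^{-1}\,1_{R_P})$ is a difference of normalized indicators, and $L_P,R_P$ are themselves unions of children of $J=C^+(\omega)$. Hence \eqref{eq1} applies to each summand, and the good-grid equivalences $|L_P|_+\asymp|R_P|_+\asymp|P|_+$ together with the Bowen distortion $|\sigma_+^j(L_P)|_+\asymp|\sigma_+^j(R_P)|_+\asymp|\sigma_+^j(P)|_+$ absorb the constants, yielding \eqref{eq2}. The technical heart of the argument is the summation in Step 2: one must show that the H\"older decay rate $s_+$ of $\phi_+$ (in the adapted metric $d_+$) outruns the $|Q|_+^{-s}$ growth of Haar atom weights across all descendants of $W$ in the good grid; this is exactly what the hypothesis $s<s_+$ guarantees, and the ratios $\lambda_1,\lambda_2$ of the grid are what convert the H\"older modulus of continuity into a convergent geometric series.
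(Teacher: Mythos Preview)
Your proof is correct and follows essentially the same route as the paper: explicit formula via the single inverse branch, bounded distortion to obtain a H\"older oscillation estimate on $W=\sigma_+^j(P)$, splitting off the mean $\tfrac{1_W}{|W|_+}$ (your $f_0$) handled by Proposition~\ref{pos}, and a Haar expansion of the remainder summed geometrically thanks to $s<s_+$. Your derivation of \eqref{eq2} is actually a bit cleaner than the paper's: you apply \eqref{eq1} directly to $L_P$ and $R_P$ and use the good-grid comparabilities, whereas the paper first proves the sharper intermediate bound $\|A_+-\tfrac{1_W}{|W|}\|_{B^s_{1,1}}\le C|W|^{s_+-s}$ and then argues that the resulting ``almost atom'' $\tilde a$ on $\sigma_+^j(P)$ has bounded norm; both arrive at the same estimate. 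One small clarification worth making explicit: the ``finitely many atoms straddling $\partial W$'' are precisely those in $\mathcal{H}^+_{R_0}$ where $R_0=C^+(x_j,\dots,x_{\ell-1})$ is the parent cylinder of $W$, hence at most $n-1$ of them independently of $j$ and $\ell$; atoms attached to cylinders strictly above $R_0$ are constant on $W$ and therefore orthogonal to the mean-zero remainder $f_1$.
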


\begin{prop}\label{actionplus3} Let $0< s < 1$ satisfying $s_+> s$. There is $C_3> 0$ such that 
for  every $$Q\in \mathcal{H}^+$$
 and $j\geq 0$ we have
\begin{align*}
\sum_{\omega\colon |\omega|=j} \Vert\mathcal{L}_{\phi_+,\sigma_+,\omega} a^+_Q\Vert_{B^s_{1,1}(I^+)} \leq C_3,
\end{align*}
where $a^+_Q$ is the $(s,1)$-atom of $Q$.
\end{prop}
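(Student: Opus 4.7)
The plan is to split the sum over words of length $j$ into two regimes depending on whether $j\le\ell$ or $j\ge\ell+1$, where $\ell=|\omega_P|$ and $P=C^+(\omega_P)\in\mathcal{P}_+^\ell$ is the partition element such that $Q\in\mathcal{H}^+_P$. Decomposing the atom $a^+_Q$ along the level-$(\ell+1)$ children of $P$, which all have the form $C^+(\omega_P a)$, I can write
\[
a^+_Q \;=\; \sum_{a\in S_Q} c_a\, 1_{C^+(\omega_P a)},
\]
where $S_Q\subset\mathcal{A}$ indexes the children contained in the support of $a^+_Q$, $|c_a|$ equals either $|Q|_+^s/|L_Q|_+$ or $|Q|_+^s/|R_Q|_+$, and the bookkeeping identity
$\sum_{a\in S_Q}|c_a|\,|C^+(\omega_P a)|_+ \;=\; 2|Q|_+^s\le 2$
holds and will carry the combinatorial weight at the end.

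For the small-power regime $j\le\ell$, only the length-$j$ prefix of $\omega_P$ can give a nonzero contribution, because any other word $\omega'$ of length $j$ sends $\sigma_{+,\omega'}^{-1}$ outside the support of $a^+_Q$. Proposition \ref{lem2}, equation \ref{eq2}, then gives $\|\mathcal{L}_{\phi_+,\sigma_+,\omega_P|_j}\,a^+_Q\|_{B^s_{1,1}}\le C_2\,|Q|_+^s/|\sigma_+^j(Q)|_+^s$. I will then use bounded distortion of $\sigma_+^j$ on $C^+(\omega_P)$ (valid because $m_+$ is Gibbs for a H\"older potential) together with the multiplicative Gibbs property $|C^+(\alpha\beta)|_+\asymp|C^+(\alpha)|_+|C^+(\beta)|_+$ to derive $|Q|_+/|\sigma_+^j(Q)|_+\asymp|C^+(\omega_P|_j)|_+\le 1$, which bounds the term uniformly in $j$ and $Q$.

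For the large-power regime $j\ge\ell+1$, contributing words are exactly $\omega=\omega_P\,a\,\tau$ with $a\in S_Q$ and $|\tau|=j-\ell-1$, and for such $\omega$ the indicator $1_{C^+(\omega_P a)}(\omega x)\equiv 1$, so the decomposition collapses to $\mathcal{L}_{\phi_+,\sigma_+,\omega}\,a^+_Q = c_a\,\mathcal{L}_{\phi_+,\sigma_+,\omega}\,1$. The key observation is that since only the branch indexed by $\omega$ survives, $\mathcal{L}_{\phi_+,\sigma_+,\omega}\,1 = \mathcal{L}_{\phi_+,\sigma_+}^{\,j}\,1_{C^+(\omega)}$; this puts me in position to apply Proposition \ref{lem2}, equation \ref{eq1}, to the single cylinder $C^+(\omega)$ viewed as the union of its own level-$(j+1)$ children, so that the ``$\ell$'' of the Proposition equals our $j$. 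Since $\sigma_+^j(C^+(\omega))=I^+$ has measure $1$, this yields $\|\mathcal{L}_{\phi_+,\sigma_+,\omega}\,1\|_{B^s_{1,1}}\le C\,|C^+(\omega)|_+$. Summing first over $\tau$ collapses $\sum_\tau|C^+(\omega_P\,a\,\tau)|_+ = |C^+(\omega_P a)|_+$, and summing over $a\in S_Q$ via the bookkeeping identity then gives $\sum_{|\omega|=j}\|\mathcal{L}_{\phi_+,\sigma_+,\omega}\,a^+_Q\|_{B^s_{1,1}}\le 2C$. Combining the two regimes with the trivial case $j=0$ (where the sum equals $\|a^+_Q\|_{B^s_{1,1}}=1$) produces a uniform constant $C_3$. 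The main delicate point I anticipate is the reduction $\mathcal{L}_{\phi_+,\sigma_+,\omega}\,1=\mathcal{L}_{\phi_+,\sigma_+}^{\,j}\,1_{C^+(\omega)}$ together with checking that equation \ref{eq1} applies in the ``cylinder as union of its own children'' configuration; once this is in place the whole estimate is essentially combinatorial.
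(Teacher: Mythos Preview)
Your proposal is correct and follows essentially the same two-case split as the paper: for $j\le\ell$ only the prefix word contributes and one invokes \eqref{eq2}, while for $j>\ell$ one decomposes $a_Q^+$ into cylinder indicators and applies \eqref{eq1} with $\sigma_+^j$ mapping each piece onto $I^+$. The only cosmetic difference is that the paper decomposes $a_Q^+$ directly at level $j$ (writing $a_Q^+=\sum_{J\in\mathcal{C}_+^j,\,J\subset Q}c_J\,|J|_+^{1-s}|Q|_+^{s-1}\cdot|J|_+^s\,1_J/|J|_+$) rather than at level $\ell+1$ followed by a sum over tails $\tau$, and you are more explicit than the paper about why $|Q|_+^s/|\sigma_+^j(Q)|_+^s$ is uniformly bounded in the small-$j$ regime.
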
 

The following proposition is proven in Section \ref{contract}.

\begin{prop}[Operator $\mathcal{L}^{-1}_{\sigma_s,\omega}$ acting on $B^{-s}_{1,1}$] \label{lem4} Let $0< t < 1$ satisfying $t < t_-$. There are $\lambda_1 \in (0,1)$ and $C_1> 0$ such that 
for  every $$Q\in \mathcal{H}^-\setminus \{I^-\}$$
and for every word $\omega$ we have
\begin{equation}\label{eq6} \Vert\mathcal{L}_{\psi_-,\sigma_-,\omega}^{-1}b^-_Q\Vert_{B^{-t}_{1,1}}\leq C_1\frac{|\sigma_{-,\omega}^{-1}(Q)|_-^{t}}{|Q|_-^t},
\end{equation}
where $b^-_Q$ is the $(-t,1)$-atom of $Q$. Moreover for $b^-_{I^-}=1_{I^-}$ we have 
\begin{equation}\label{eq7} \Vert\mathcal{L}_{\psi_-,\sigma_-,\omega}^{-1}b^-_{I^-}\Vert_{B^{-t}_{1,1}}\leq C_1.
\end{equation}
\end{prop}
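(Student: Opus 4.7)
The plan is to compute the Haar coefficients of $f := \mathcal{L}^{-1}_{\psi_-,\sigma_-,\omega} b^-_Q$ in the unbalanced basis of $I^-$ and to bound the resulting $\ell^1$-sum, which by definition is $\Vert f\Vert_{B^{-t}_{1,1}}$. Let $\ell:=|\omega|$, $\tilde Q:=\sigma_{-,\omega}^{-1}(Q)$, $\tilde L_Q:=\sigma_{-,\omega}^{-1}(L_Q)$, $\tilde R_Q:=\sigma_{-,\omega}^{-1}(R_Q)$; these are cylinders, because inverse branches of $\sigma_-$ send cylinders to cylinders, so $(\tilde L_Q,\tilde R_Q)\in\mathcal{H}^-_{\tilde Q}$. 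Put $J_\omega(y):=\exp(-\sum_{j<\ell}\psi_-(\sigma_-^j(\sigma_{-,\omega}^{-1}(y))))$. The formula of Section \ref{transfer} and the change-of-variables identity $|\sigma_{-,\omega}^{-1}(B)|_- = \int_B J_\omega^{-1}\,dm_-$ yield
\[
f(x)=J_\omega(\sigma_-^\ell(x))\,b^-_Q(\sigma_-^\ell(x))\,1_{\tilde Q}(x),\qquad \int_E f\,dm_-=\int_{\sigma_-^\ell(E)}b^-_Q\,dm_-\text{ for }E\subset\tilde Q.
\]
The second identity implies $\int f\,dm_-=0$. Since $\tilde Q$ is a union of cylinders in the grid, for every $Q'\in\mathcal{H}^-$ with $Q'\supsetneq\tilde Q$ the pull-back $\tilde Q$ sits entirely inside one side of $Q'$'s Haar split, making the corresponding Haar coefficient a scalar multiple of $\int f\,dm_-=0$. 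Hence $\Vert f\Vert_{B^{-t}_{1,1}}=\sum_{Q'\subseteq\tilde Q}|c_{Q'}|$.

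The top coefficient $c_{\tilde Q}$ is computed exactly: from $\int_{\tilde L_Q}f\,dm_-=\int_{L_Q}b^-_Q\,dm_-=|Q|_-^{-t}$ and $\int_{\tilde R_Q}f\,dm_-=-|Q|_-^{-t}$, plugging into the dual-Haar formula and using $|\tilde L_Q|_-+|\tilde R_Q|_-=|\tilde Q|_-$ gives
\[
c_{\tilde Q}=|\tilde Q|_-^{t-1}\bigl(|\tilde R_Q|_-+|\tilde L_Q|_-\bigr)\cdot|Q|_-^{-t}=\Bigl(\frac{|\tilde Q|_-}{|Q|_-}\Bigr)^{\!t},
\]
matching the target bound exactly and independently of the precise behavior of $J_\omega$. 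Moreover, in the constant-Jacobian case $J_\omega\equiv c_0$, a parallel computation shows that every Haar difference $|R_{Q'}|_-\int_{L_{Q'}}f-|L_{Q'}|_-\int_{R_{Q'}}f$ vanishes identically for $Q'\subsetneq\tilde Q$, so the proposition reduces to this single coefficient in that case.

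For general $\psi_-\in B^{t_-}_{\infty,\infty}(I^-)$, the descendant coefficients are driven by the Gibbs distortion bound
\[
|J_\omega^{-1}(y)-J_\omega^{-1}(y')|\le C\,J_\omega(y_0)^{-1}\,d_-(y,y')^{t_-},
\]
uniform in $\omega$ and obtained from H\"older continuity of $\psi_-$ together with the geometric decay of cylinder sizes under $\sigma_-$. For $Q'\subsetneq\tilde Q$ at depth $n$ below $\tilde Q$, one checks that the Haar difference becomes proportional to the oscillation of $J_\omega^{-1}$ across the $\sigma_-^\ell$-image of $Q'$, a sub-cylinder of $Q$ of diameter at most $\lambda_2^n|Q|_-$; the distortion estimate then yields $|c_{Q'}|\le C(|\tilde Q|_-/|Q|_-)^t\cdot\kappa^n$ with $\kappa=\kappa(t,t_-,\lambda_1,\lambda_2)<1$. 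Summing the at most $(1/\lambda_1)^n$ such coefficients at depth $n$ gives a geometric series convergent under the hypothesis $t<t_-$ (possibly strengthened to accommodate the grid geometry), and adding this to the top coefficient yields the bound $\Vert f\Vert_{B^{-t}_{1,1}}\le C_1(|\tilde Q|_-/|Q|_-)^t$.

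The case $Q=I^-$, $b^-_{I^-}=1_{I^-}$, is handled analogously: $f=J_\omega\,1_{C^-_\omega}$, whose leading approximation $c_0\,1_{C^-_\omega}$ has $B^{-t}_{1,1}$-norm at most $C\,c_0|C^-_\omega|_-\le C$ by Proposition \ref{dirac2} and $c_0|C^-_\omega|_-\approx 1$, while the H\"older corrections to $J_\omega$ yield a further convergent geometric tail. The main technical obstacle throughout is the per-scale bookkeeping of Haar coefficients: tracking the Gibbs distortion of $J_\omega$ through the dual-Haar formula and the upstairs/downstairs measure relation $|\tilde A|_-\approx J_\omega^{-1}|A|_-$ to produce uniform geometric decay across all descendants of $\tilde Q$.
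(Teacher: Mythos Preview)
Your approach is essentially the same as the paper's: expand $f=\mathcal{L}^{-1}_{\psi_-,\sigma_-,\omega}b^-_Q$ in the Haar basis of $I^-$, use the Gibbs distortion of the Jacobian to bound the coefficients, and sum. The paper organises this slightly differently, treating the normalized indicator $1_P/|P|_-$ first and then combining the $L_Q$ and $R_Q$ pieces, but the engine is identical. There are, however, two concrete issues in your write-up that the paper handles and you do not.

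\textbf{The pulled-back pair is not in $\mathcal{H}^-$.} You claim that $\tilde Q,\tilde L_Q,\tilde R_Q$ are cylinders and that $(\tilde L_Q,\tilde R_Q)\in\mathcal{H}^-_{\tilde Q}$. In general $Q\in\mathcal{H}^-_P$ has $L_Q,R_Q$ equal to \emph{unions} of children of the cylinder $P$, not single cylinders; their pull-backs are unions of children of $\sigma_{-,\omega}^{-1}(P)$, and even if they happened to be cylinders there is no reason the pair matches the \emph{chosen} binary-tree structure $\mathcal{H}^-_{\sigma_{-,\omega}^{-1}(P)}$. Hence your ``top coefficient $c_{\tilde Q}$'' is not a coefficient of the actual Haar basis. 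The paper addresses exactly this point: after isolating the leading piece $\tilde b=|\tilde Q|_-^{-t}\bigl(1_{\tilde L_Q}/|\tilde L_Q|_- - 1_{\tilde R_Q}/|\tilde R_Q|_-\bigr)$, it observes that $\tilde b$ has mean zero and is constant on children of a single cylinder, so by the good-grid property it can be rewritten as $\sum_{E\in\mathcal{H}^-_W}c_E\,b^-_E$ with $|c_E|\le C$ depending only on the grid. You need this step (or an equivalent one) to make the argument valid.

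\textbf{Summation by count may diverge.} You bound the number of descendants at depth $n$ by $(1/\lambda_1)^n$ and the size of each coefficient by $\kappa^n$, then hope $\kappa/\lambda_1<1$; you even flag this with ``possibly strengthened to accommodate the grid geometry''. This is not how the paper sums, and in general the counting approach can fail. The paper's per-coefficient bound is $|c_{Q'}|\le C\,\dfrac{|Q'|_-}{|\tilde P|_-}\,|Q'|^t_-\,|\sigma_-^\ell(Q')|_-^{t_-}$; summing uses the \emph{measure} identity $\sum_{Q'\text{ at a given level}}|Q'|_-\le C|\tilde P|_-$ together with $\max|Q'|_-\le\lambda_2^n|\tilde P|_-$, which yields a geometric series in $\lambda_2^{nt}$ with no additional hypothesis on the grid constants. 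Replace your count-based sum with this measure-based one and the convergence is automatic for any $t>0$.
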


\subsection{The transfer operator is quasi-compact} The following is the main technical result

\begin{thm}\label{quasi} Let $s,t \in (0,1)$ such that $ s < s_+$ and $t<t_-$.  The operator 
$$\mathcal{L}\colon B^{s}_{1,1}(I^+)\otimes B^{-t}_{1,1}(I^-)\rightarrow B^{s}_{1,1}(I^+)\otimes B^{-t}_{1,1}(I^-)$$
is a norm-limited operator, that is  
$$\sup_k \Vert \mathcal{L}^k\Vert_{B^{s}_{1,1}(I^+)\otimes B^{-t}_{1,1}(I^-)}< \infty,$$
and its essential spectral radius is bounded by  
$$r_e(\mathcal{L})\leq \max \{ \exp(s M_+(\phi_+)), \exp(t M_-(\psi_-))   \}< 1,
$$
where
\begin{align*}
M_+(\phi_+)&= \sup_{\mu \ inv. prob. \ of \  \sigma_+} \int \phi_+ \ d\mu,\\
M_-(\psi_-)&= \sup_{\mu \ inv. prob. \ of \  \sigma_-} \int \psi_- \ d\mu.
\end{align*} 
\end{thm}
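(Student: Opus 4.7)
The plan is to establish Theorem \ref{quasi} in two stages: first uniform norm-boundedness of the iterates, and then a Doeblin-Fortet-Lasota-Yorke inequality from which the essential spectral radius bound follows by Hennion's theorem.

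For the uniform bound $\sup_k \Vert\mathcal{L}^k\Vert_{B^{s,-t}} < \infty$, I would reduce to the action on atoms. Since $\{a^+_Q b^-_J\}_{Q\in \mathcal{H}^+, J\in \mathcal{H}^-}$ is an unconditional Schauder basis of $B^{s,-t}$, it suffices to bound $\Vert\mathcal{L}^k(a^+_Q b^-_J)\Vert_{B^{s,-t}}$ uniformly in $k,Q,J$. Corollary \ref{prod} provides the decoupling
$$\mathcal{L}^k(a^+_Q b^-_J)(x,y)=\sum_{|\omega|=k} (\mathcal{L}_{\phi_+,\sigma_+,\omega} a^+_Q)(x)\,(\mathcal{L}_{\psi_-,\sigma_-,\omega}^{-1} b^-_J)(y),$$
and Proposition \ref{prodd} factorizes the tensor-product norm of each summand, so
$$\Vert\mathcal{L}^k(a^+_Q b^-_J)\Vert_{B^{s,-t}} \leq \sum_{|\omega|=k} \Vert\mathcal{L}_{\phi_+,\sigma_+,\omega} a^+_Q\Vert_{B^s_{1,1}(I^+)}\, \Vert\mathcal{L}_{\psi_-,\sigma_-,\omega}^{-1} b^-_J\Vert_{B^{-t}_{1,1}(I^-)}.$$
Proposition \ref{actionplus3} yields the uniform estimate $\sum_\omega \Vert\mathcal{L}_{\phi_+,\sigma_+,\omega} a^+_Q\Vert \leq C_3$, while Proposition \ref{lem4} controls $\Vert\mathcal{L}_{\psi_-,\sigma_-,\omega}^{-1} b^-_J\Vert_{B^{-t}_{1,1}(I^-)}$ by $C_1\,|\sigma_{-,\omega}^{-1}(J)|_-^t/|J|_-^t$. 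Coupling the two estimates and using the Gibbs identity $\sum_\omega m_-(\sigma_{-,\omega}^{-1}(J))=m_-(\sigma_-^{-k}(J))\leq C\,m_-(J)$, which is uniform in $k$ because $m_-$ is quasi-invariant with bounded invariant density $\rho_{\psi_-}$, yields the uniform bound on iterates after interpolating the power $t<1$ against the uniform $+$-side bound.

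For the essential spectral radius, I would derive a Doeblin-Fortet-Lasota-Yorke inequality by splitting atoms $a^+_Q b^-_J$ according to the combined level $n+m$ of $Q$ and $J$. For atoms of high level ($n+m\geq N$), the Gibbs bounds of Propositions \ref{lem2} and \ref{lem4} combined with the variational principle produce exponential decay at rate $r=\max\{\exp(sM_+(\phi_+)),\exp(tM_-(\psi_-))\}$: the ratios $|Q|_+/|\sigma_+^k(Q)|_+$ and $|\sigma_{-,\omega}^{-1}(J)|_-/|J|_-$ are essentially Birkhoff-sum exponentials of $\phi_+$ and $\psi_-$, whose asymptotic growth rates are bounded by $M_+(\phi_+)$ and $M_-(\psi_-)$ (any Birkhoff limit is the integral of the potential against some invariant measure). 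For low-level atoms, truncation of the atomic decomposition at level $N$ defines a finite-rank projection, hence a weaker semi-norm whose unit ball is precompact in $B^{s,-t}$. The resulting inequality
$$\Vert\mathcal{L}^k f\Vert_{B^{s,-t}} \leq C r^k \Vert f\Vert_{B^{s,-t}} + C_k \Vert f\Vert_{\text{weak}},$$
together with Hennion's theorem, gives $r_e(\mathcal{L})\leq r$.

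The main obstacle will be the coupling in the sum $\sum_\omega \Vert\mathcal{L}_{\phi_+,\sigma_+,\omega} a^+_Q\Vert\cdot\Vert\mathcal{L}_{\psi_-,\sigma_-,\omega}^{-1} b^-_J\Vert$: each factor is controlled only by Gibbs weights along opposing directions (along the $\sigma_+$-orbit for the first, along the $\sigma_-$-orbit for the second), and neither is uniformly bounded in $\omega$. Only the combined sum admits a uniform bound, through the Gibbs partition-function identity above, and extracting the sharp essential spectral rate requires carefully matching the asymptotic growth of these Birkhoff sums to the variational quantities $M_\pm$, which is the classical bridge between thermodynamic formalism and spectral bounds.
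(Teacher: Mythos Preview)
Your overall architecture matches the paper's proof: reduce to atoms via the Schauder basis, decouple with Corollary \ref{prod}, and combine Propositions \ref{lem2}, \ref{actionplus3}, \ref{lem4}. However, the ``main obstacle'' you identify is illusory. The stable-side factor \emph{is} uniformly bounded in $\omega$: since $\sigma_{-,\omega}^{-1}(J)$ ranges over disjoint subsets of $\sigma_-^{-k}(J)$ as $\omega$ varies, the very partition-function identity you invoke gives $|\sigma_{-,\omega}^{-1}(J)|_-/|J|_- \leq \sum_{\omega'}|\sigma_{-,\omega'}^{-1}(J)|_-/|J|_- \leq C$, so Proposition \ref{lem4} yields $\Vert\mathcal{L}_{\psi_-,\sigma_-,\omega}^{-1} b^-_J\Vert_{B^{-t}_{1,1}}\leq C_1$ for every $\omega$ and $J$. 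No interpolation is needed: you simply pull out $\sup_\omega$ of the stable factor and apply Proposition \ref{actionplus3} to the remaining sum on the unstable side. This is exactly what the paper does for norm-boundedness.

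For the essential spectral radius, your Hennion/Lasota--Yorke plan is equivalent in spirit to the paper's Nussbaum argument, but the split you propose (by combined level $n+m\geq N$) is not the effective one. The paper splits by the \emph{stable} index first: whenever $J\neq I^-$ (any level $m\geq 1$), the ratio $|\sigma_{-,\omega}^{-1}(J)|_-^t/|J|_-^t$ already carries the contraction $\exp(t\sum\psi_-)$, independently of $n$ or $m$. Only when $J=I^-$ does one look at the unstable side, and there the dichotomy is between $+$-level $\geq \ell$ (a single word $\tilde\omega$ contributes, with contraction $|Q|_+^s/|\sigma_+^\ell(Q)|_+^s$ from Proposition \ref{lem2}) and $+$-level $<\ell$ (finitely many atoms, hence finite rank). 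So the finite-rank piece is $\{J=I^-,\ \text{level}(Q)<\ell\}$, which depends on the iterate $\ell$, not on a fixed threshold $N$ of combined level. With this corrected split your Hennion inequality goes through and gives the same bound $r_e(\mathcal{L})\leq\max\{e^{sM_+(\phi_+)},e^{tM_-(\psi_-)}\}$.
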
 
\begin{proof} By the Corollary \ref{prod},  Proposition \ref{actionplus3} and Proposition \ref{lem4}  we have that for every $Q\in \mathcal{H}^+$ and $J\in \mathcal{H}^-$

\begin{align*} &\Big\Vert \mathcal{L}^\ell(a^+_J b^-_Q)\Big\Vert_{B^{s}_{1,1}(I^+)\otimes B^{-t}_{1,1}(I^-)} \\
&\leq \sum_{\omega\colon |\omega|=\ell }\Big\Vert\mathcal{L}_{\psi_-,\sigma_-,\omega}^{-1}b^-_Q\Big\Vert_{B^{-t}_{1,1}(I^-)} \Big\Vert\mathcal{L}_{\phi_+,\sigma_+,\omega}a^+_J\Big\Vert_{B^{s}_{1,1}(I^+)}\leq CC_3.
\end{align*} 
It follows that  $\mathcal{L}$ is well-defined and  it is  norm-limited. 

To obtain the estimative for the essential spectral radius we study the action of $\mathcal{L}$ on each element of the form $a^+_J b^-_Q.$\\

\noindent {\it Case I. $Q \in \mathcal{H}^-\setminus\{I^-\}$.} By  Corollary \ref{prod},  Proposition \ref{actionplus3} and Proposition \ref{lem4}  we have
\begin{align*} &\Big\Vert \mathcal{L}^\ell(a^+_J b^-_Q)\Big\Vert_{B^{s}_{1,1}(I^+)\otimes B^{-t}_{1,1}(I^-)} \\
&\leq \sum_{\omega\colon |\omega|=\ell }\Big\Vert\mathcal{L}_{\psi_-,\sigma_-,\omega}^{-1}b^-_Q\Big\Vert_{B^{-t}_{1,1}(I^-)} \Big\Vert\mathcal{L}_{\phi_+,\sigma_+,\omega}a^+_J\Big\Vert_{B^{s}_{1,1}(I^+)}\\
&\leq C_3 C_1 \sup_{\substack{Q\in \mathcal{H}^-\setminus \{I^-\}\\ |\omega|=\ell }} \frac{|\sigma_{-,\omega}^{-1}(Q)|_-^{s}}{|Q|_-^s} \leq C_3 C_1 \sup_{x\in I^-} \exp\left(\sum_{k=0}^{\ell-1} s\psi_-(\sigma^k_-x) \right).
\end{align*} 

\noindent {\it Case II. $Q=I^-$ and $J \in \mathcal{H}_+^r$, with $r \geq \ell$.} In this case $J\subset C^+(\tilde{\omega})$, with $|\tilde{\omega}|=\ell$.  By  Corollary \ref{prod}, Proposition \ref{lem2} and Proposition \ref{lem4}  we have
\begin{align*} &\Big\Vert \mathcal{L}^\ell(a^+_J b^-_Q)\Big\Vert_{B^{s}_{1,1}(I^+)\otimes B^{-t}_{1,1}(I^-)} \\
&\leq \Big\Vert\mathcal{L}_{\psi_-,\sigma_-,\tilde{\omega}}^{-1}b^-_{I^-}\Big\Vert_{B^{-t}_{1,1}(I^-)} \Big\Vert\mathcal{L}_{\phi_+,\sigma_+,\tilde{\omega}}a^+_J\Big\Vert_{B^{s}_{1,1}(I^+)}\\
&\leq C_2 C_1 \sup_{\substack{J\in \mathcal{H}_+^r\\ r\geq \ell}}  \frac{|J|_+^s}{|\sigma_+^\ell(J)|_+^s}  \leq C_2 C_1 \sup_{x\in I^+} \exp\left(\sum_{k=0}^{\ell-1} s\phi_+(\sigma^k_+x) \right).
\end{align*} 

\noindent {\it Case III. $Q=I^-$ and $J \in \mathcal{H}_+^r$, with $r < \ell$.} There is just a {\it finite number} of cases here. So this is a {\it finite rank} contribution.  

As a consequence by Nussbaum formula \cite{nuss}  for the essential spectral radius and familiar  bounded distortions arguments  (see Lemma \ref{dist}) it follows that 
\begin{align*} r_e(\mathcal{L})&\leq \liminf_\ell \max \bigg\{\sup_{x\in I^+} \exp\left(\frac{1}{\ell} \sum_{k=0}^{\ell-1} s\phi_+(\sigma^k_+x) \right), \sup_{x\in I^-} \exp\left(\frac{1}{\ell} \sum_{k=0}^{\ell-1} s\psi_-(\sigma^k_-x) \right)   \bigg\}\\
&\leq \max \{ \exp\left(s M_+(\phi_+)\right), \exp\left(s M_-(\psi_-)\right)   \}.
\end{align*} 
\end{proof} 

\section{Operator $\mathcal{L}_{\sigma_+,\omega}$ acting on $B^s_{1,1}$}
\label{expand}

\begin{lemma}\label{dist}  If $\phi_+$ is $\beta$-H\"older then there is $C > 0$ such that for every $\ell$ and finite word 
$\omega=x_0,\dots,x_{\ell-1}$ we have that 
$$\phi_{+,\omega}(x)=\phi_{+,x_0,\dots,x_{\ell-1}}^\ell(x)=\sum_{j=0}^{\ell-1} \phi_+(\sigma_{+,x_j,\dots,x_{\ell-1}}^{-1}(x)) $$
satisfies 
$$|\phi_{+,x_0,\dots,x_{\ell-1}}^\ell(x)-\phi_{+,x_0,\dots,x_{\ell-1}}^\ell(y)|\leq C d(x,y)^\beta.$$
\end{lemma}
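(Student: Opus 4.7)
The plan is to unwind the definition of the inverse branches and use the fact that iterating the inverse branch by $\ell - j$ steps contracts distances by the factor $n^{-(\ell-j)}$ in the symbolic metric, so the Hölder estimates sum geometrically.

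First, I would fix $x, y \in I^+$ with $d(x,y) = n^{-m}$, meaning $\pi_i(x) = \pi_i(y)$ for $i < m$ and $\pi_m(x) \neq \pi_m(y)$. For each $j$ with $0 \leq j \leq \ell-1$, the inverse branch prepends the letters $x_j, x_{j+1}, \dots, x_{\ell-1}$, so
\[
\sigma_{+,x_j,\dots,x_{\ell-1}}^{-1}(x) = (x_j, x_{j+1}, \dots, x_{\ell-1}, \pi_0(x), \pi_1(x), \dots).
\]
Consequently $\sigma_{+,x_j,\dots,x_{\ell-1}}^{-1}(x)$ and $\sigma_{+,x_j,\dots,x_{\ell-1}}^{-1}(y)$ coincide in the first $(\ell-j)+m$ coordinates, which yields
\[
d\!\left(\sigma_{+,x_j,\dots,x_{\ell-1}}^{-1}(x),\, \sigma_{+,x_j,\dots,x_{\ell-1}}^{-1}(y)\right) \leq n^{-(\ell-j)}\, d(x,y).
\]

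Next, using that $\phi_+$ is $\beta$-Hölder with constant $[\phi_+]_\beta$, the $j$-th summand contributes at most $[\phi_+]_\beta\, n^{-\beta(\ell-j)}\, d(x,y)^\beta$. Summing in $j$ and substituting $k=\ell-j$ gives
\[
|\phi_{+,x_0,\dots,x_{\ell-1}}^\ell(x)-\phi_{+,x_0,\dots,x_{\ell-1}}^\ell(y)| \leq [\phi_+]_\beta\, d(x,y)^\beta \sum_{k=1}^{\ell} n^{-\beta k} \leq \frac{[\phi_+]_\beta\, n^{-\beta}}{1 - n^{-\beta}}\, d(x,y)^\beta,
\]
which is the desired bound with $C = [\phi_+]_\beta\, n^{-\beta}/(1 - n^{-\beta})$, independent of $\ell$ and $\omega$.

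There is essentially no obstacle here: this is the classical bounded distortion estimate for expanding maps, transcribed to the symbolic setting. The only point one has to be careful about is correctly identifying how many coordinates of the pre-images agree, since the prefix of length $\ell - j$ is common by construction and then the original disagreement at coordinate $m$ of $x, y$ is pushed to coordinate $(\ell-j)+m$ of the pre-images. Once this accounting is right, the geometric summability of $n^{-\beta k}$ closes the argument.
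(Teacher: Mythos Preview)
Your argument is correct and is precisely the standard bounded distortion computation: the inverse branch $\sigma_{+,x_j,\dots,x_{\ell-1}}^{-1}$ prepends $\ell-j$ identical symbols, so it contracts the symbolic metric by $n^{-(\ell-j)}$, and the H\"older estimates then sum as a geometric series in $k=\ell-j$. The paper itself states this lemma without proof, treating it as a classical fact, so your write-up simply supplies the omitted standard argument; there is nothing to compare.
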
 

\begin{proof}[Proof of Proposition \ref{lem2}] 
 Let 
$$a^+= \frac{1_P}{|P|}$$
and $$A_+=\mathcal{L}_{+,\phi_+}^j \frac{1_P}{|P|}=e^{\phi_+^j} \frac{1_{\sigma_+^j(P)}}{|P|}$$ as in Proposition \ref{prod2}. We claim that 
$$\bigg\Vert A_+-\frac{1_{\sigma_+^j(P)}}{|\sigma_+^j(P)|}\bigg\Vert_{B^s_{1,1}(I^+)}\leq C |\sigma_+^j(P)|_+^{s^+-s}.
$$

For a set of positive measure $S$, denote
$$m_+(A_+,S)= \frac{1}{|S|_+} \int_S A_+ \ dm_+.$$

Given $Q=(L_Q,R_Q)\in \mathcal{H}^+_R$, let
$$\tilde{a}_Q=  (m_+(A_+,L_Q)- m_+(A_+,Q)) 1_{L_Q}+ (m_+(A_+,R_Q)- m_+(A_+,Q)) 1_{R_Q}.$$
We have
\begin{align*}
A_+ &=  m_+(A_+,\sigma_+^j(P)) 1_{\sigma_+^j(P)} + 
\sum_{\substack{Q\subset \sigma_+^j(P) \\ Q\in \mathcal{H}^+}} \tilde{a}_Q \\
&=\frac{1_{\sigma_+^j(P)}}{|\sigma_+^j(P)|} + \sum_{\substack{Q\subset \sigma_+^j(P) \\ Q\in \mathcal{H}^+}} \tilde{a}_Q  
\end{align*}
Due Proposition \ref{dist} there is $C$ such that 
$$\frac{1}{C} \frac{|P|}{|\sigma_+^j(P)|}\leq e^{\phi_{+,x_0,\dots,x_{j-1}}^j(x)}\leq C \frac{|P|}{|\sigma_+^j(P)|}
$$
for every $x \in \sigma_+^j(P)$ and  consequently
$$|A_+(x)-A_+(y)|\leq C \frac{1}{|\sigma_+^j(P)|} |Q|_+^{s_+}$$
for every $x,y \in Q\subset \sigma_+^j(P)$, with $Q\in \mathcal{H}^+$. An easy calculation shows that 
$$\tilde{a}_Q =  c_Q a^+_Q, $$
with 
$$c_Q= |Q|^{-s}_+|L_Q|_+(m_+(A_+,L_Q)- m_+(A_+,Q)),$$
so  $$|c_Q|\leq C \frac{|Q|_+}{|\sigma_+^j(P)|_+} |Q|_+^{s_+-s} $$
So
$$\sum_{\substack{Q\subset \sigma_+^j(P) \\ Q\in \mathcal{H}^+}} |c_Q|\leq C |\sigma_+^j(P)|_+^{s^+-s}.$$
This concludes the proof of the claim. It is easy to see that 
$$\bigg\Vert \frac{1_{\sigma_+^j(P)}}{|\sigma_+^j(P)|_+} \bigg\Vert_{B^s_{1,1}}\leq C|\sigma_+^j(P)|_+^{-s},$$
so (\ref{eq1}) follows. 
Let $P=(L_P,R_P)\in \mathcal{H}^+_J$. 
%we have
%\begin{align*} a_Q^+= |Q|^s \left(\frac{1_{L_Q}}{|L_Q|_+} - \frac{1_{R_Q}}{|R_Q|_+} \right),
%\end{align*}
By (\ref{eq1}) we have 
\begin{align*} &\bigg\Vert\mathcal{L}_{\phi_+,\sigma_+,x_0x_1\dots x_{j-1}}a^+_P -\frac{|P|_+^s}{|\sigma_+^j(P)|_+^s} |\sigma_+^j(P)|_+^s\left( \frac{1_{\sigma_+^j(L_P)}}{|\sigma_+^j(L_P)|_+}-
\frac{1_{\sigma_+^j(R_P)}}{|\sigma_+^j(R_P)|_+}\right)\Vert_{B^{s}_{1,1}} \\
&\leq C \frac{|P|_+^s}{|\sigma_+^j(P)|_+^{s}}|\sigma_+^j(P)|_+^{s^+}.
\end{align*} 
Note that perhaps the function
$$ \tilde{a}= |\sigma_+^j(P)|_+^s\left( \frac{1_{\sigma_+^j(L_P)}}{|\sigma_+^j(L_P)|_+}-
\frac{1_{\sigma_+^j(R_P)}}{|\sigma_+^j(R_P)|_+}\right)$$
is a not a $(s,1)$-atom, but its $m_+$-integral vanishes  and  $\sigma_+^j(P), \sigma_+^j(R_P), \sigma_+^j(L_P)$ are unions of children of the cylinder $C_+(\omega).$
In particular by the nice grid property there is $C> 0$, that depends only on the nice grid, such that one can write
$$\tilde{a}= \sum_{E\in \mathcal{H}_W^+} c_E a_E^+,$$
where $a_E$ is the $(s,1)$-atom of $E$ and $|c_E|\leq C$. This implies that 
\begin{align*} \bigg\Vert\mathcal{L}_{\phi_+,\sigma_+,x_0x_1\dots x_{j-1}}a^+_P\bigg\Vert_{B^{s}_{1,1}} 
\leq C \frac{|P|_+^s}{|\sigma_+^j(P)|_+^s} 
\end{align*} 
\end{proof}

\begin{proof}[Proof of Proposition \ref{actionplus3}]
Let $Q\in \mathcal{H}^+_W$, where $W=C(x_0,\dots,x_{\ell-1}).$
If $j\leq \ell$ we can apply  Proposition \ref{lem2} to conclude that 
\begin{align*}
\sum_{\omega\colon |\omega|=j} \big\Vert\mathcal{L}_{\phi_+,\sigma_+,\omega} a^+_Q\big\Vert_{B^s_{1,1}(I^+)}=\big\Vert\mathcal{L}_{\phi_+,\sigma_+,x_0.\dots,x_{j-1})} a^+_Q\big\Vert_{B^s_{1,1}(I^+)} \leq C \frac{|Q|_+^s}{|\sigma_+^j(Q)|_+^s}.
\end{align*}
On the other hand if $j > \ell$ there is $C >0$ (that depends only on the grid) such that we can write 
$$a_Q^+ =\sum_{\substack{J\subset Q\\ J\in \mathcal{C}_+^j}}  c_J \frac{|J|_+^{1-s}}{|Q|_+^{1-s}} |J|_+^s\frac{1_J}{|J|_+}$$
and $|c_J|\leq C$. By Lemma \ref{lem2} we have
\begin{align*} 
\sum_{\omega\colon |\omega|=j} \big\Vert\mathcal{L}_{\phi_+,\sigma_+,\omega} a^+_Q\big\Vert_{B^s_{1,1}(I^+)} &\leq 
C\sum_{\substack{J\subset Q\\ J\in \mathcal{C}_+^j}} \frac{|J|_+^{1-s}}{|Q|_+^{1-s}} \frac{|J|_+^{s}}{
|\sigma_+^j(J)|_+^{s}}\\
&\leq 
C\sum_{\substack{J\subset Q\\ J\in \mathcal{C}_+^j}} \frac{|J|_+}{|Q|_+^{1-s}} \leq C|Q|^{s}_+.
\end{align*} 
\end{proof}

\section{Operator $\mathcal{L}^{-1}_{\sigma_-,\omega}$ acting on $B^{-t}_{1,1}$}
\label{contract}

\begin{proof}[Proof of Proposition \ref{lem4}] Let $W\in \mathcal{C}_-$ and let $P$ be an union of children of $W$. Define $$b^-= \frac{1_P}{|P|},$$
and $$B_-=\mathcal{L}_{-,\psi_-,\omega}^{-1} \frac{1_P}{|P|}=e^{-\psi_{-,\omega}\circ \sigma_-^\ell} \frac{1_{\sigma_{-,\omega}^{-1}(P)}}{|P|}$$ as in Proposition \ref{prod2}. We claim that 
\begin{equation}\label{eq3} \bigg\Vert B_--\frac{1_{\sigma_{-,\omega}^{-1}(P)}}{|\sigma_{-,\omega}^{-1}(P)|}\bigg\Vert_{B^{-t}_{1,1}(I^+)}\leq C |\sigma_{-,\omega}^{-1}(P)|^t_-|P|_-^{t_-}.
\end{equation}
Indeed, using the same notation of the previous proposition, given $Q=(L_Q,R_Q)\in \mathcal{H}^-_R$, let
$$\tilde{b}_Q=  (m_-(B_-,L_Q)- m_-(B_-,Q)) 1_{L_Q}+ (m_-(B_-,R_Q)- m_-(B_-,Q)) 1_{R_Q}.$$
\begin{align*}
B_- &=  m_-(B_-,\sigma_{-,\omega}(P)) 1_{\sigma_{-,\omega}^{-1}(P)} + 
\sum_{\substack{Q\subset \sigma_{-,\omega}^{-1}(P) \\ Q\in \mathcal{H}^-}} \tilde{a}_Q \\
&=\frac{1_{\sigma_{-,\omega}^{-1}(P)}}{|\sigma_{-,\omega}^{-1}(P)|_-} + \sum_{\substack{Q\subset \sigma_{-,\omega}^{-1}(P)\\ Q\in \mathcal{H}^-}} \tilde{b}_Q  
\end{align*}
Due Proposition \ref{dist} there is $C$ such that 
$$\frac{1}{C} \frac{|P|_-}{|\sigma_{-,\omega}^{-1}(P)|_-}\leq e^{-\psi_{-,\omega}(\sigma_-^\ell(x))}\leq C \frac{|P|_-}{|\sigma_{-,\omega}^{-1}(P)|_-}
$$
for every $x \in \sigma_{-,\omega}^{-1}(P)$ and also
$$|B_-(x)-B_-(y)|\leq C \frac{1}{|\sigma_{-,\omega}^{-1}(P)|_-} |\sigma_-^\ell (Q)|_-^{t_-}$$
for every $x,y \in Q$, with $Q\subset \sigma_{-,\omega}^{-1}(P)$, with $Q\in \mathcal{H}^-$. We have 
$$\tilde{b}_Q =  c_Q b^-_Q, $$
with 
$$c_Q= |Q|^{t}_-|L_Q|_-(m_-(B_-,L_Q)- m_-(B_-,Q)),$$
so  $$|c_Q|\leq C \frac{|Q|_-}{|\sigma_{-,\omega}^{-1}(P)|_-} |Q|^t_-|\sigma_-^\ell (Q)|_-^{t_-}, $$
that implies
$$\sum_{\substack{Q\subset \sigma_{-,\omega}^{-1}(P) \\ Q\in \mathcal{H}^-}} |c_Q|\leq C |\sigma_{-,\omega}^{-1}(P)|^t_-|P|_-^{t_-}$$
This finishes the proof of the Claim.  Note that since $\sigma_{-,\omega}^{-1}(P)$ is a union of children of some cylinder in $\mathcal{C}_-$, Proposition \ref{dirac2}  implies that there is $C > 0$ (that depends only on the grid) such that   
$$\bigg\Vert\frac{1_{\sigma_{-,\omega}^{-1}(P)}}{|\sigma_{-,\omega}^{-1}(P)|}\bigg\Vert_{B^{-t}_{1,1}}\leq C$$
So we can  prove (\ref{eq7}) taking $P=I^-$ in  Claim (\ref{eq3}). 

To prove (\ref{eq7}), let $Q=(L_Q,R_Q)\in \mathcal{H}^-_W$. By Claim (\ref{eq3}) we have 
\begin{align*} &\bigg\Vert\mathcal{L}_{\psi_-,\sigma_-,\omega}b^-_Q -\frac{|\sigma_{-,\omega}^{-1}(Q)|_-^{t}}{|Q|_-^t} |\sigma_{-,\omega}^{-1}(Q)|_-^{-t}\left( \frac{1_{\sigma_{-,\omega}^{-1}(L_Q)}}{|\sigma_{-,\omega}^{-1}(L_Q)|_-}-
\frac{1_{\sigma_{-,\omega}^{-1}(R_Q)}}{|\sigma_{-,\omega}^{-1}(R_Q)|_-}\right)\bigg\Vert_{B^{-t}_{1,1}} \\
&\leq C\frac{|\sigma_{-,\omega}^{-1}(Q)|^t_-}{|Q|_-^{t}} |Q|_-^{t_-}
\end{align*} 
Note that the function
$$ \tilde{b}= |\sigma_{-,\omega}^{-1}(Q)|_-^{-t}\left( \frac{1_{\sigma_{-,\omega}^{-1}(L_Q)}}{|\sigma_{-,\omega}^{-1}(L_Q)|_-}-
\frac{1_{\sigma_{-,\omega}^{-1}(R_Q)}}{|\sigma_{-,\omega}^{-1}(R_Q)|_-}\right)$$
is a not in general an $(-t,1)$-atom, but its $m_-$-integral vanishes  and  $$\sigma_{-,\omega}^{-1}(Q), \sigma_{-,\omega}^{-1}(R_Q), \sigma_{-,\omega}^{-1}(L_Q)$$ are unions of children of the cylinder $\sigma_{-,\omega}^{-1}(W)$.
In particular by the nice grid property there is $C> 0$, that depends only on the nice grid, such that one can write
$$\tilde{b}= \sum_{E\in \mathcal{H}_W^-} c_E b_E^-,$$
where $a_E$ is the $(-t,1)$-atom of $E$ and $|c_E|\leq C$. This implies that 
$$\big\Vert\mathcal{L}_{\psi_-,\sigma_-,\omega}^{-1}b^-_Q\big\Vert_{B^{-t}_{1,1}}\leq C_1\frac{|\sigma_{-,\omega}^{-1}(Q)|_-^{t}}{|Q|_-^t}.$$
\end{proof} 

\section{Spectral gap and Exponential Decay of Correlations}
\label{decay}\label{ultima} 

\begin{prop}[Theorem \ref{main0}]\label{exp}  The unique element of the peripheral spectrum of the bounded operator $\mathcal{L}$ acting on $B^{s}_{1,1}(I^+)\otimes B^{-t}_{1,1} $is $1$, that is a simple isolated eigenvalue. Its eigenspace is generated by the Gibbs measure $\nu$, that is the unique physical measure with respect to $m_+\times m_-$. Furthermore  
$$\pi_1(\mu)=\langle \mu,1_{I^+\times I^-}\rangle \nu$$ for every $\mu \in B^{s}_{1,1}(I^+)\otimes B^{-t}_{1,1}(I^-)$, where $\pi_1$ is the spectral projection on the $1$-eigenspace.
\end{prop}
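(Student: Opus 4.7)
The strategy is to exploit the quasi-compactness already proved in Theorem~\ref{quasi} and to transport information between the bilateral operator $\mathcal{L}$ and the unilateral transfer operator $\mathcal{L}_+:=\mathcal{L}_{\phi_+,\sigma_+}$ via the continuous projection
\[
\pi_+^\star\colon B^{s}_{1,1}(I^+)\otimes B^{-t}_{1,1}(I^-)\longrightarrow B^{s}_{1,1}(I^+).
\]
Because $\pi_+\circ \sigma=\sigma_+\circ \pi_+$, the intertwining $\pi_+^\star\circ \mathcal{L}=\mathcal{L}_+\circ \pi_+^\star$ holds on measures with cylindrical densities and extends to all of $B^{s,-t}$ by density. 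On the unilateral side, the classical Ruelle--Perron--Frobenius theorem for the expanding shift $\sigma_+$, adapted to $B^s_{1,1}(I^+)$ in the framework of \cite{smania2022besov}, provides a simple isolated eigenvalue $1$ with eigenvector $\rho_{\phi_+}$ and the rest of the spectrum strictly inside the unit disk.

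\textbf{Step 1 (membership).} Verify that $\nu\in B^{s,-t}$ and $\mathcal{L}\nu=\nu$. Expand $\nu$ in the unconditional Haar basis $\{a_Q^+ b_J^-\}$ and estimate the coefficients using the Gibbs bounded-distortion property together with the regularity $\phi_+\in B^{s_+}_{\infty,\infty}(I^+)$ with $s_+>s$. The resulting estimates parallel those of Propositions~\ref{lem2} and \ref{lem4} and yield summability of the coefficients. The eigenvalue equation $\mathcal{L}\nu=\nu$ is then immediate from $\sigma$-invariance of $\nu$ and the definition of $\mathcal{L}$, and under $\pi_+^\star$ it becomes $\pi_+^\star\nu=\rho_{\phi_+}m_+$, the unique $1$-eigenvector of $\mathcal{L}_+$ up to scale.

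\textbf{Step 2 (kernel of $\pi_+^\star$ is contracting).} If $\pi_+^\star\mu=0$, then in the Haar expansion $\mu=\sum c_{Q,J}a_Q^+ b_J^-$ all coefficients with $J=I^-$ vanish, because those are exactly the coefficients of $\pi_+^\star\mu$. Using Case I of the proof of Theorem~\ref{quasi}, together with $M_-(\psi_-)<0$, one obtains the uniform estimate
\[
\bigl\|\mathcal{L}^\ell(a_Q^+ b_J^-)\bigr\|_{B^{s,-t}}\leq C\lambda^\ell
\qquad\text{for some }\lambda\in(0,1),\ J\in\mathcal{H}^-\setminus\{I^-\},
\]
and therefore $\|\mathcal{L}^\ell\mu\|_{B^{s,-t}}\leq C\lambda^\ell\|\mu\|_{B^{s,-t}}$. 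Thus $\mu$ lies in the stable part of the spectral decomposition and can have no nontrivial peripheral component.

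\textbf{Step 3 (peripheral spectrum and simplicity).} If $\mu$ is a generalized eigenvector for $\lambda$ with $|\lambda|=1$, the intertwining makes $\pi_+^\star\mu$ a generalized $\lambda$-eigenvector of $\mathcal{L}_+$; by the unilateral RPF this forces $\lambda=1$ and $\pi_+^\star\mu=c\rho_{\phi_+}m_+=c\pi_+^\star\nu$. Then $\pi_+^\star(\mu-c\nu)=0$, so by Step 2 the element $\mu-c\nu$ has $\mathcal{L}^\ell$-iterates decaying in norm; since it is itself a peripheral generalized eigenvector of $\mathcal{L}$, it must be $0$. Hence the peripheral spectrum is $\{1\}$ and the $1$-eigenspace equals $\mathbb{C}\nu$; the absence of Jordan blocks at $1$ is inherited from $\mathcal{L}_+$, since a nontrivial Jordan relation $(\mathcal{L}-I)\mu=c\nu$ would push down to $(\mathcal{L}_+-I)\pi_+^\star\mu=c\pi_+^\star\nu$, impossible for simple $\mathcal{L}_+$.

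\textbf{Step 4 (projection formula).} The functional $\Phi(\mu):=\langle\mu,1_{I^+\times I^-}\rangle$ is continuous on $B^{s,-t}$ (Section~\ref{evaluation}) and satisfies $\Phi\circ \mathcal{L}=\Phi$ (verified on cylindrical densities, extended by density). Since $(I-\pi_1)$ commutes with $\mathcal{L}$ and $\mathcal{L}^k(I-\pi_1)\mu\to 0$ in $B^{s,-t}$, we get $\Phi((I-\pi_1)\mu)=\lim_k\Phi(\mathcal{L}^k(I-\pi_1)\mu)=0$, so $\Phi(\mu)=\Phi(\pi_1\mu)$. Writing $\pi_1\mu=c\nu$ and using $\Phi(\nu)=1$ gives $c=\Phi(\mu)$, i.e.\ $\pi_1(\mu)=\langle\mu,1_{I^+\times I^-}\rangle\nu$.

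\textbf{Main obstacle.} The principal technical hurdle is Step~1: producing an explicit summable Haar decomposition of $\nu$ in $B^{s,-t}$. This is delicate because when $\psi_-$ is not cohomologous to $\phi_-$ the Gibbs state $\nu$ is singular with respect to $m=m_+\times m_-$ (Proposition~\ref{srbp}), so one cannot simply appeal to a Hölder density; the coefficient estimates must combine the Gibbs bounded distortion on the unstable direction $I^+$ with a cancellation argument for the dual Haar coefficients on $I^-$, mirroring the mechanism of Proposition~\ref{lem4}.
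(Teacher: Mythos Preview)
Your Steps 2--4 are correct and form a route genuinely different from the paper's. The paper never uses the intertwining $\pi_+^\star\circ\mathcal L=\mathcal L_+\circ\pi_+^\star$ nor your observation that $\ker\pi_+^\star$ consists exactly of the elements whose $J=I^-$ coefficients all vanish, on which Case~I of the proof of Theorem~\ref{quasi} already yields uniform exponential contraction. Instead the paper runs a Blank--Keller--Liverani argument: from $\sup_k\|\mathcal L^k\|<\infty$ and the Ces\`aro formula $\pi_\lambda h=\lim_n\frac1n\sum_{j<n}\lambda^{-j}\mathcal L^j h$ it shows every peripheral eigenvector is a finite Borel measure absolutely continuous with respect to $\tilde\nu:=\pi_1(1_{I^+\times I^-})$, and then identifies $\tilde\nu=\nu$ by computing $\langle\mathcal L^k(a_Q^+b_J^-),1_{C_+(\omega)\times I^-}\rangle$ and invoking the mixing of $\nu_+^{\phi_+}$. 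Your approach is more direct and bypasses the measure-theoretic detour entirely; what it buys is that simplicity and the absence of other peripheral eigenvalues drop out of the unilateral RPF plus the contraction on $\ker\pi_+^\star$, with no need to show that peripheral eigenvectors are measures.

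However, Step~1 as written is a real gap: you have not produced the summable Haar expansion of $\nu$, and when $\psi_-$ is not cohomologous to $\phi_-$ this is a genuinely delicate computation (you say so yourself). The point is that \emph{this step is unnecessary}, and the paper avoids it. One does not establish $\nu\in B^{s,-t}$ a priori; one obtains it a posteriori as $\pi_1(1_{I^+\times I^-})$, which lies in $B^{s,-t}$ automatically because $1_{I^+\times I^-}=a_{I^+}^+b_{I^-}^-\in B^{s,-t}$ and $\pi_1$ is bounded. In your own framework: from $\sup_k\|\mathcal L^k\|<\infty$ and $r_e(\mathcal L)<1$ the peripheral spectrum is nonempty, so set $\mu_0:=\pi_1(1_{I^+\times I^-})\in B^{s,-t}$. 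The Ces\`aro representation shows $\mu_0$ is a $\sigma$-invariant probability on $I^+\times I^-$; the intertwining gives $\pi_+^\star\mu_0=P_+^{\mathcal L_+}(1_{I^+})=\rho_{\phi_+}$, hence $\pi_+^\star\mu_0=\nu_+^{\phi_+}$, and by uniqueness of the Gibbs state with prescribed unstable marginal, $\mu_0=\nu$. This delivers $\nu\in B^{s,-t}$ for free, after which your Steps~2--4 conclude without change. With this replacement your argument becomes a complete proof, arguably cleaner than the paper's.
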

\begin{proof}
We separated the proof in several claims. \\

\noindent {\it Claim A.
Every element of the peripherical spectrum is an isolated eigenvalue and its generalized eigenspace coincides with its eigenspace. }\\

Since \begin{equation} \label{max} \sup \Vert\mathcal{L}^n\Vert_{B^{s}_{1,1}(I^+)\otimes B^{-t}_{1,1}(I^-)} < \infty,\end{equation} 
it follows that its spectral radius is at most $1$. On the other hand note that 
$$\langle\mathcal{L}^n(1_{I^+\times I^-}),1_{I^+\times I^-}\rangle= 1$$
for every $n$, so the spectrum radius is $1$. 
Since the essential spectrum radius of $\mathcal{L}$ is smaller than $1$ it follows that  every element of the peripheral spectrum (eigenvalues with nome one) is an isolated eigenvalue with finite dimensional generalized eigenspace. Its follows from (\ref{max}) that the generalized eigenspace  coincides with the  eigenspace. \\

\noindent {\it Claim B.  Every eigenvector of the peripheral spectrum is a  complex-valued finite and regular Borelian measure that is absolutely continuous with respect to the probability $\pi_1(1_{I^+\times 1^-}).$}

Here we follow an argument by  Blank, Keller and Liverani \cite{bkl} and Gou\"ezel and Liverani \cite{gl} (see also Demers,  Kiamari and Liverani \cite{impa} and Demers \cite{demers}) that applies to our setting as well. Let $\lambda$ be a eigenvalue of $\mathcal{L}$ with $|\lambda|=1$ and let $\pi_\lambda$ be the spectral projection on its eigenspace. We have
\begin{equation}\label{pi}  \pi_\lambda(h)= \lim_{n\rightarrow \infty} \frac{1}{n} \sum_{j< n} \frac{1}{\lambda^j}\mathcal{L}^jh.\end{equation} 
 
The test functions on the measure space with good grid $$(m_+\times m_-, I^+ \times I^-, \mathcal{C}_+\times \mathcal{C}_-)$$ are dense on $B^{s}_{1,1}(I^+)\otimes B^{-t}_{1,1}(I^-)$, so given $\mu$ in the $\lambda$-eigenspace there is a test function $h$ such that $\pi_\lambda(h)=\mu$. So given a test function $w$ we have
\begin{align*} \big|\langle\pi_\lambda(h),w\rangle\big|&\leq \limsup_{n\rightarrow \infty} \frac{1}{n} \sum_{j< n} \big|\langle\mathcal{L}^jh,w\rangle\big|\\
&\leq \limsup_{n\rightarrow \infty} \frac{1}{n} \sum_{j< n}  \int (\mathcal{L}^j|h|)|w| \ d(m_+\times m_-)\\
&\leq 
\limsup_{n\rightarrow \infty} \frac{1}{n} \sum_{j< n}  \int |h|\cdot |w|\circ \sigma^n \ d(m_+\times m_-)\\
&\leq |h|_{L^1(m_+\times m_-)} |w|_\infty,
\end{align*} 
Here $|\cdot|_\infty$ denotes the sup norm. Since test functions are dense in the space of  continuous functions $C(I^+\times I^-)$ we conclude that $\mu$ extends as a linear functional acting on $C(I^+\times I^-)$, so $\mu$ is a finite and regular Borelian complex-valued measure. It is easy to see that $\pi_1(u)$ is a (real-valued non-negative) invariant measure if the test function $u$ is non-negative  and $\tilde{\nu}=\pi_1(1_{I^+\times I^-})$ is a probability. Furthermore 
\begin{align*} \big|\langle\pi_\lambda(h),w\rangle\big|&\leq \limsup_{n\rightarrow \infty} \frac{1}{n} \sum_{j< n} \big|\langle\mathcal{L}^jh,w\rangle\big|\\
&\leq \limsup_{n\rightarrow \infty} \frac{1}{n} \sum_{j< n}  \int (\mathcal{L}^j|h|)|w| \ d(m_+\times m_-)\\
&\leq |h|_\infty  
\lim_{n\rightarrow \infty} \frac{1}{n} \sum_{j< n}  \langle\mathcal{L}^j(1_{I^+\times I^-}),|w|\rangle\\
&\leq |h|_\infty \langle\pi_1(1_{I^+\times I^-}),|w|\rangle,
\end{align*} 
for every test function $w$, and consequently for every continous $w$,  so $\mu=\pi_\lambda(h)$ is absolutely continous with respect  to  $\tilde{\nu}=\pi_1(1_{I^+\times I^-})$ and indeed 
$$\frac{d\mu}{d\tilde{\nu}} \in L^\infty(\nu).$$

\noindent {\it Claim C.  Let $\rho \in L^1(\tilde{\nu})$, with $\rho\geq 0$ and $\tilde{\nu}=\pi_1(1_{I^+\times I^-})$, such that $\rho \tilde{\nu}$ is $\sigma$-invariant. Then $\rho \tilde{\nu} \in B^{s}_{1,1}(I^+)\otimes B^{-t}_{1,1}(I^-)$ and it belongs to the image of $\pi_1$.}

Since $\tilde{\nu}$ is a finite Borelian regular measure there is a sequence $\rho_n$ of test functions that converges to $\rho$ on $L^1(\tilde{\nu})$. So
for every test function $\gamma$
\begin{align*}
\bigg| \langle\mathcal{L}^j(\rho_k\tilde{\nu}),\gamma\rangle-\int \gamma \rho d\tilde{\nu}\bigg|&=\bigg| \int \rho_k \gamma\circ \sigma^j \ d\tilde{\nu}-\int \gamma\circ \sigma^j  \rho d\tilde{\nu}\bigg| \\
&\leq |\rho_k-\rho|_{L^1(\tilde{\nu})} |\gamma|_\infty.
\end{align*} 
which implies that 
$$\bigg|\langle\pi_1(\rho_k\tilde{\nu}),\gamma\rangle- \int \gamma \rho d\tilde{\nu}\bigg|\leq |\rho_k-\rho|_{L^1(\tilde{\nu})} |\gamma|_\infty.$$
Since the image of $\pi_1$ is finite-dimensional this implies that $\rho\tilde{\nu}$ is in the image of $\pi_1$.\\

\noindent {\it Claim D. The eigenvalue $1$ is simple. Indeed the image of $\pi_1$ is generated by  $\tilde{\nu}=\pi_1(1_{I^+\times I^-})$, that  is the (unique) Gibbs state for the potential $\phi_+$ (and consequently for $\phi$). Moreover
$$\pi_1(\mu)=\langle \mu,1_{I^+\times I^-}\rangle \nu$$ for every $\mu \in B^{s}_{1,1}(I^+)\otimes B^{-t}_{1,1}(I^-)$}.

Given a  product of atoms $a^+_Qb^-_J$ and  a word $\omega$ we have
\begin{align} \nonumber \langle\mathcal{L}^k(a^+_Qb^-_J),1_{C_+(\omega)\times I^-}\rangle&=\int_{I^+\times I^-} a^+_Qb^-_J 1_{C_+(\omega)\times I^-}\circ \sigma^k dm_+\times m_-\\
\nonumber &=\int_{I^-} b^-_J \ dm_-\int_{I^+} a^+_Q 1_{C_+(\omega)}\circ \sigma^k_+ \ dm_+\\ 
\label{jum}
&\rightarrow_k \int_{I^-} b^-_J \ dm_-\int_{I^+} a^+_Q \ dm_+ \nu_+^{\phi_+}(C_+(\omega)).
\end{align} 
due the mixing properties of $\nu_+^{\phi_+}$. So
\begin{align*}\pi_+^\star(\pi_1(1_{I^+\times I^-}))(C_+(\omega))&= \langle\pi_1(1_{I^+\times I^-}),1_{C_+(\omega)\times I^-}\rangle\\&=
\langle\pi_1(a^+_{I^+}b^-_{I^-}),1_{C_+(\omega)\times I^-}\rangle\\
&=\nu_+^{\phi_+}(C_+(\omega)),
\end{align*} 
that implies that $\pi_1(1_{I^+\times I^-})$ is a $\sigma$-invariant probability whose marginal on the unstable direction is $\nu_+^{\phi_+}$, so $\pi_1(1_{I^+\times I^-})=\nu$. 
Furthermore 
$$\langle\pi_1(a^+_Qb^-_J),1_{C_+(\omega)\times I^-}\rangle=0$$
provided $(Q,J)\neq (I^+,I^-)$. In this case, the invariance of the complex-valued measure $\pi_1(a^+_Qb^-_J)$ implies that $\pi_1(a^+_Qb^-_J)=0$.

Consequently 
$\pi_1(\mu)=\mu(1_{I^+\times I^-}) \nu$ for every $\mu \in B^{s}_{1,1}(I^+)\otimes B^{-t}_{1,1}(I^-)$. Since there is an unique Gibbs measure it follows that they are multiple of $\tilde{\nu}$, and, due Claim C., we conclude that  $\nu$ is ergodic.

\noindent {\it Claim E. The peripheral spectrum is $\{1\}$.}\\
Indeed, it follows from (\ref{jum}) that if  $|\lambda|=1$ but $\lambda\neq 1$ we have that 
$$\langle\pi_\lambda(a^+_Qb^-_J),1_{C_+(\omega)\times I^-}\rangle=0$$
for every $(Q,J)$ and $\omega$. Since
$$\mathcal{L}\pi_\lambda(a^+_Qb^-_J)=\lambda \pi_\lambda(a^+_Qb^-_J), $$
that  implies
$$\langle\mathcal{L}\pi_\lambda(a^+_Qb^-_J),\gamma\rangle= \langle\pi_\lambda(a^+_Qb^-_J),\gamma\circ \sigma\rangle=\lambda \langle\pi_\lambda(a^+_Qb^-_J),\gamma\rangle$$
for every test function $\gamma$, we conclude that $\pi_\lambda(a^+_Qb^-_J)=0$, so $\pi_\lambda=0$.
\end{proof}

\begin{thm}\label{main4}  There is $C > 0$ and $\lambda < 1$ such that 
\begin{equation} \label{eq8} \big\Vert\mathcal{L}^\ell\mu - \langle \mu,1_{I^+\times I^-}\rangle\nu\big\Vert_{B^{s}_{1,1}(I^+)\otimes B^{-t}_{1,1}(I^-)}\leq C\lambda^\ell \Vert \mu \Vert_{B^{s}_{1,1}(I^+)\otimes B^{-t}_{1,1}(I^-)}\end{equation} 
for every  $\mu \in B^{s}_{1,1}(I^+)\otimes B^{-t}_{1,1}(I^-)$.  In particular for every  function $\gamma\colon I^+\times I^-\rightarrow \mathbb{C}$ that satisfies the assumptions in Theorem \ref{main0}.v
$$\bigg\Vert \langle \mathcal{L}^\ell\mu,\gamma\rangle - \langle \mu,1_{I^+\times I^-}\rangle \int \gamma \ d\nu\bigg\Vert_{B^{s}_{1,1}(I^+)\otimes B^{-t}_{1,1}(I^-)}\leq C\lambda^\ell K_{s,t}(\gamma)\Vert \mu \Vert_{B^{s}_{1,1}(I^+)\otimes B^{-t}_{1,1}(I^-)}.
$$
Here $K_{s,t}(\gamma)$ is as in Theorem \ref{main0}.v.
\end{thm}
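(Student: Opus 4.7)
The plan is to derive Theorem \ref{main4} as a direct consequence of the structural results already established: quasi-compactness (Theorem \ref{quasi}) together with the identification of the peripheral spectrum (Proposition \ref{exp}), feeding the output through the evaluation bound of Section \ref{evaluation} for the second inequality.

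First I would record the spectral decomposition. By Theorem \ref{quasi} the essential spectral radius of $\mathcal{L}$ on $B^{s}_{1,1}(I^+)\otimes B^{-t}_{1,1}(I^-)$ is bounded by $\max\{e^{sM_+(\phi_+)}, e^{tM_-(\psi_-)}\} < 1$, and by Proposition \ref{exp} (Claims A, D, E) the only point of the spectrum on the unit circle is the simple eigenvalue $1$, whose spectral projection is $\pi_1(\mu)=\langle\mu,1_{I^+\times I^-}\rangle\,\nu$. Consequently the spectrum of $\mathcal{L}$ splits as $\{1\}\sqcup S$ with $S$ contained in a closed disk of radius $\lambda_0<1$, and we have $\mathcal{L}\pi_1=\pi_1\mathcal{L}=\pi_1$ since $\nu$ is a $1$-eigenvector.

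Next I would use the commutation to write, for every $\ell\geq 1$,
\[
\mathcal{L}^\ell-\pi_1 \;=\; \mathcal{L}^\ell(I-\pi_1) \;=\; \bigl[\mathcal{L}(I-\pi_1)\bigr]^\ell,
\]
using $(I-\pi_1)^2=I-\pi_1$ and $\mathcal{L}(I-\pi_1)=(I-\pi_1)\mathcal{L}(I-\pi_1)$. The operator $\mathcal{L}(I-\pi_1)$ has spectral radius $\lambda_0<1$, so by Gelfand's formula, for any fixed $\lambda\in(\lambda_0,1)$ there is $C>0$ with
\[
\bigl\Vert\bigl[\mathcal{L}(I-\pi_1)\bigr]^\ell\bigr\Vert_{B^{s}_{1,1}(I^+)\otimes B^{-t}_{1,1}(I^-)}\;\leq\; C\lambda^\ell.
\]
Combining with $\pi_1(\mu)=\langle\mu,1_{I^+\times I^-}\rangle\nu$ yields (\ref{eq8}) directly.

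Finally, for the second estimate I would invoke the evaluation bound established in Section \ref{evaluation}: any function $\gamma$ satisfying the hypotheses of Theorem \ref{main0}.v can be paired against any distribution $\eta\in B^{s}_{1,1}(I^+)\otimes B^{-t}_{1,1}(I^-)$ with
\[
|\langle\eta,\gamma\rangle|\;\leq\; K_{s,t}(\gamma)\,\Vert\eta\Vert_{B^{s}_{1,1}(I^+)\otimes B^{-t}_{1,1}(I^-)}.
\]
Applying this to $\eta=\mathcal{L}^\ell\mu-\pi_1(\mu)$ and using (\ref{eq8}) gives the claimed inequality, since $\langle\pi_1(\mu),\gamma\rangle=\langle\mu,1_{I^+\times I^-}\rangle\int\gamma\,d\nu$. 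There is no substantial obstacle here; all the work has already been done in Theorem \ref{quasi}, Proposition \ref{exp}, and Section \ref{evaluation}. The only point that requires care is the identity $\mathcal{L}^\ell-\pi_1=[\mathcal{L}(I-\pi_1)]^\ell$, which relies precisely on $\pi_1$ being a spectral projection commuting with $\mathcal{L}$ and on $\mathcal{L}\pi_1=\pi_1$ (both furnished by Proposition \ref{exp}).
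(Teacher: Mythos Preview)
Your proposal is correct and follows essentially the same route as the paper: the paper's proof is a two-line citation of Theorem \ref{quasi}, Proposition \ref{exp}, and Section \ref{evaluation}, and you have simply spelled out the standard spectral-gap argument (splitting off $\pi_1$, applying Gelfand's formula to $\mathcal{L}(I-\pi_1)$) that these citations encode.
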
 
\begin{proof} The first inequality follows from  Theorem \ref{quasi} and Proposition \ref{exp}. The second one follows from Section \ref{evaluation}. 
\end{proof}

\begin{proof}[Proof of Theorem \ref{main1}] It is enough to show that 
$$\rho\  m_+\!\!\times \mu \in B^{s}_{1,1}(I^+)\otimes B^{-t}_{1,1}(I^-).$$
The assumption on $\rho$ implies that 
there is $C >0$ such that  for $\mu$-almost every $y$ we have  
%$$||\rho(\cdot,y)||_{L^1(m_+)}\leq C_1,$$
%and 
$$\rho(\cdot,y)= \sum_{Q\in \mathcal{H}^+}  c_{Q}(y) a^+_Q,$$
where 
$$\sum_{Q\in \mathcal{H}^+}  |c_{Q}(y)|\leq C_1.$$
Here
$$c_{Q}(y) =\frac{1}{|a^+_Q|_{L^2(m_+)}^2}  \int \rho(x,y) a^+_Q(x) \ dm_+(x).$$
So $Q\neq \mathcal{H}^+$ and $J \in\mathcal{H}^-$  we have
\begin{align*} c_{Q,J}&=\frac{1}{|a^+_Q|_{L^2(m_+)}^2|b^-_J|_{L^2(m_-)}^2} \int \int \rho(x,y) \ a^+_Q(x)b^-_J(y)  \ dm_+(x)d\mu(y)\\
&\leq  \frac{C}{ |J|_-^{-2t-1}} \int   |c_Q(y)|  |b^-_J(y)|  d\mu(y) \\
&\leq C \int_J   |c_Q(y)|  d\mu(y) \frac{|J|_-^{-t-1}  }{ |J|_-^{-2t-1}}\\
&\leq C |J|_-^{t} \int_J   |c_Q(y)|  d\mu(y)  \end{align*} 
That implies 
\begin{align*} \sum_{J\in \mathcal{H}^-} \sum_{Q\in \mathcal{H}^+}  |c_{Q,J}| &\leq  C \sum_{J\in \mathcal{H}^-} |J|_-^{t}  \int \sum_{Q\in \mathcal{H}^+}  |c_{Q}(y)| d\mu(y)\\
&\leq  C \sum_{J\in \mathcal{H}^-} |J|_-^{t} \int_J \Vert \rho(\cdot,y) \Vert_{B^{s}_{1,1}} d\mu(y)\\
&\leq C \int \Vert \rho(\cdot,y) \Vert_{B^{s}_{1,1}} d\mu(y) < \infty.
\end{align*} \end{proof}

\begin{proof}[Proof of Theorem \ref{main7}]  By Theorem \ref{main4} it is enough to show that 
$$U^\star(\rho m_+) \in B^{s}_{1,1}(I^+)\otimes B^{-t}_{1,1}(I^-).$$   We have that 
\begin{equation}\label{uuu} d_-(u(x),u(z))\leq C d_+(x,z)^\beta\end{equation} 
for some $C\geq 0$ and $\beta > 0$. 
Since $\rho \in B^s_{1,1}$ we have
$$\rho =  
\sum_{k\in \mathbb{N}} \sum_{P\in \mathcal{C}^k_+}  \sum_{Q\in \mathcal{H}_P^+} c_Q a^+_Q,$$
with 
$$\sum_{k\in \mathbb{N}} \sum_{P\in \mathcal{C}^k_+}  \sum_{Q\in \mathcal{H}_P^+} |c_Q| = ||\rho||_{B^s_{1,1}}< \infty.$$

%, due Theorem 15.1 in S. \cite{smania2022besov} if we define
%$$osc_1(\rho,Q)= \frac{1}{|Q|_+}\inf_{c\in \mathbb{C}} \int_Q |\rho(x)-c| \ dm_+(x)$$
%then the $B^s_{1,1}$-norm of $\rho$ is comparable to  
%$$osc^s_{1,1}(\rho)= \sum_k \sum_{Q\in \mathcal{C}_+^k} |Q|^{-s}_+ osc_1(\rho,Q).$$
Denote
$$m_+(\rho,Q)= \frac{1}{|Q|_+} \int_Q \rho \ dm_+.$$
Since $\rho \in L^{1/\epsilon}(m_+)$ the H\"older inequality implies 
\begin{equation}\label{qq} |m_+(\rho,Q)|\leq ||\rho||_{1/\epsilon}|Q|_+^{-\epsilon}\end{equation} 
Observe that 
$$\sum_{Q\in \mathcal{C}_+^k} \sum_{\substack{P\subset Q\\ P\in \mathcal{C}_+^{k+1}}} \left( m_+(\rho,P)-  m_+(\rho,Q)\right)  1_{P}=\sum_{Q\in \mathcal{C}_+^k} \sum_{R\in \mathcal{H}_Q^+} c_R a^+_R,   $$
so
\begin{equation} \label{estt} ||\sum_{Q\in \mathcal{C}_+^k} \sum_{\substack{P\subset Q\\ P\in \mathcal{C}_+^{k+1}}} \left( m_+(\rho,P)-  m_+(\rho,Q)\right)  1_{P}||_{B^s_{1,1}}=\sum_{Q\in \mathcal{C}_+^k} \sum_{R\in \mathcal{H}_Q^+} |c_R|.\end{equation}

Note that for a continuous function $\gamma$ we have

$$\int \gamma  \ dU^\star(\rho m_+)=\int \gamma(x,u(x)) \rho(x) dm_+(x).$$
For every $ Q\in \mathcal{C}_+$ choose $x_Q \in Q$. 
Let  
$$\mu_k =   \sum_{Q\in \mathcal{C}_+^k} \delta_{u(x_Q)}m_+(\rho,Q)1_{Q}, $$ 
where $\delta_{u(x_Q)}$ is a distribution on $I^-$ and $1_Q$ is an indicator function of $Q$ on $I^+$. The measure $\mu_k$ belongs to $B^{s}_{1,1}(I^+)\otimes B^{-t}_{1,1}(I^-)$ and $\mu_k$  converges to $U^\star(\rho m_+)$ in the weak-$\star$ topology (on $C(I^+\times I^-)^\star$). We claim they converge in the $B^{s}_{1,1}(I^+)\otimes B^{-t}_{1,1}(I^-)$ topology. 
Indeed 
\begin{align*} \mu_{k+1}-\mu_k &= \sum_{Q\in \mathcal{C}_+^k} \sum_{\substack{P\subset Q\\ P\in \mathcal{C}_+^{k+1}}}\left( \delta_{u(x_P)}m_+(\rho,P)-  \delta_{u(x_Q)}m_+(\rho,Q)\right) 1_{P}\\
&=\sum_{Q\in \mathcal{C}_+^k} \sum_{\substack{P\subset Q\\ P\in \mathcal{C}_+^{k+1}}}\delta_{u(x_Q)} \left( m_+(\rho,P)-  m_+(\rho,Q)\right)  1_{P}\\
&+ \sum_{Q\in \mathcal{C}_+^k} \sum_{\substack{P\subset Q\\ P\in \mathcal{C}_+^{k+1}}} (\delta_{u(x_P)}-\delta_{u(x_Q)})  m_+(\rho,P)1_{P}
\end{align*} 
By Proposition \ref{dirac2} we have
\begin{align*} \Vert\delta_{u(x_P)}-\delta_{u(x_Q)}\Vert_{B^{-t}_{1,1}}&\leq C d_-(u(x_P),u(x_Q))^t\\
&\leq C |u|_{\dot{C}^\beta(I^+,I^-)} d_+(x_P,x_Q)^{\beta t} \\
&\leq C|u|_{\dot{C}^\beta(I^+,I^-)} |Q|^{\beta t}_+.\end{align*} 
%Since $\rho$ is $t$-H\"older we have 
%$$|m_+(\rho,P)-  m_+(\rho,Q)|\leq C\Vert\rho\Vert_{B^t_{\infty,\infty}(I^+)} |Q|_+^t.$$
By Propositions \ref{pos}, \ref{dirac2} and \ref{prodd},  and additionaly  (\ref{qq}) and (\ref{estt})   we have
\begin{align*} &\sum_k \Vert\mu_{k+1}-\mu_k\Vert_{B^{s}_{1,1}(I^+)\otimes B^{-t}_{1,1}(I^-)}\\
&\leq C \sum_k \Vert \sum_{Q\in \mathcal{C}_+^k} \sum_{\substack{P\subset Q\\ P\in \mathcal{C}_+^{k+1}}} \left( m_+(\rho,P)-  m_+(\rho,Q)\right)  1_{P} \Vert_{B^s_{1,1}} \\
&+ C |u|_{\dot{C}^\beta(I^+,I^-)} \Vert\rho\Vert_{1/\epsilon} \sum_k \sum_{Q\in \mathcal{C}_+^k} \sum_{\substack{P\subset Q\\ P\in \mathcal{C}_+^{k+1}}} |Q|_+^{1+\beta t -s -\epsilon} \\
&\leq C(\Vert\rho\Vert_{B^s_{1,1}(I^+)} + |u|_{\dot{C}^\beta(I^+,I^-)}\Vert\rho\Vert_{1/\epsilon}),
\end{align*} 
So $\mu_k$ is a Cauchy sequence in $B^{s}_{1,1}(I^+)\otimes B^{-t}_{1,1}(I^-)$ and $U^\star(\rho m_+)$ belongs to  $B^{s}_{1,1}(I^+)\otimes B^{-t}_{1,1}(I^-)$. This concludes the proof of Theorem  \ref{main7}.
\end{proof}

\begin{proof}[Proof of Proposition \ref{holdermult} ]

It is enough to show that there is $C > 0$ such that for every $(Q',J')\in \mathcal{H}^+\times \mathcal{H}^-$ we have that 
$$\rho a_{Q'}^+b_{J'}^- = \sum_{Q\in \mathcal{H}^+}\sum_{J\in \mathcal{H}^-} c_{Q,J} a_{Q}^+b_{J}^-,$$
with 
$$\sum_{Q\in \mathcal{H}^+}\sum_{J\in \mathcal{H}^-} |c_{Q,J}|< C.$$
Indeed

\begin{align*} c_{Q,J}&=\frac{1}{|a^+_{Q}|_{L^2(m_+)}^2|b^-_{J}|_{L^2(m_-)}^2} \int \int \rho \ a^+_{Q'}b^-_{J'}a^+_{Q}b^-_{J}  \ dm_+dm_-.
\end{align*}

If either $J'\cap J=\emptyset$ or $Q'\cap Q=\emptyset$ then $c_{Q,J}=0$.

There are a few remaining cases\\
\noindent {\it Case 1.  If   $J\neq J'$ and $Q\neq Q'$. }
Then we have 
$$\int a_Q^+a_{Q'}^+  dm_+=\int b_J^-b_{J'}^-  dm_-=0,$$ and consequently if  we choose $x_0\in Q\cap Q'$ and $y_0\in J\cap J'$ then 
\begin{align*}
&\Big| \int \int \rho \ a^+_{Q'}b^-_{J'}a^+_{Q}b^-_{J}  \ dm_+dm_-\Big|\\
=&
\Big| \int \int [(\rho(x,y)-\rho(x,y_0))- (\rho(x_0,y)-\rho(x_0,y_0)] \ a^+_{Q'}b^-_{J'}a^+_{Q}b^-_{J}  \ dm_+(x)dm_-(y)\Big|\\
&\leq  \Big(\int |Q\cap Q'|^t |a^+_{Q'}| |a^+_{Q}|  dm_+ \Big)\Big(\int |J\cap J'|^t | b^-_{J'}| |b^-_{J}| dm_-  \Big) 
\end{align*} 
Here we used that 
\begin{align} &|(\rho(x,y)-\rho(x,y_0))- (\rho(x_0,y)-\rho(x_0,y_0)|\nonumber  \\
&\leq C \min \{d_+(x,x_0), d_-(y,y_0)\}^{2t} \nonumber \\
&\leq C d_+(x,x_0)^{t}d_-(y,y_0)^{t}.\label{difff}
\end{align} 
 If $J\subset J'$ we have 
\begin{align*} \frac{1}{|b^-_{J}|_{L^2(m_-)}^2} \int |J|^t_- |b^-_{J'}(y)||b^-_{J}(y)| dm_-(y)&\leq C \frac{|J'|^{-t-1}_-|J|^{-t-1}_-|J|_-^{1+t}}{|J|_-^{-2t-1}} \\
&\leq C \frac{|J|_-^{1+t}}{|J'|_-^{1+t}}|J|_-^t,
\end{align*} 
and if $J'\subset J$ then
\begin{align*} \frac{1}{|b^-_{J}|_{L^2(m_-)}^2} \int |J'|^t_- |b^-_{J'}(y)||b^-_{J}(y)| dm_-(y)&\leq C \frac{|J'|^{-t-1}_-|J|^{-t-1}_-|J'|_-^{1+t}}{|J|_-^{-2t-1}} \\
&\leq C |J|_-^{t}.
\end{align*} 

If $Q\subset Q'$
\begin{align*} &\frac{1}{|a^+_{Q}|_{L^2(m_+)}^2}  \int |Q|^t_+ |a^+_{Q'}(x)||a^+_{Q}(x)| dm_+(x) \\
& \leq  C\frac{|Q'|^{s-1}_+|Q|^{s-1}_+|Q|_+^{1+t}}{|Q|_+^{2s-1}}\\
& \leq  C\frac{|Q|^{1+t-s}_+}{|Q'|_+^{1-s}}
\end{align*} 

If $Q'\subset Q$ we have
\begin{align*} &\frac{1}{|a^+_{Q}|_{L^2(m_+)}^2}  \int |Q'|^t_+ |a^+_{Q'}(x)||a^+_{Q}(x)| dm_+(x) \\
& \leq  C\frac{|Q'|^{s-1}_+|Q|^{s-1}_+|Q'|_+^{1+t}}{|Q|_+^{2s-1}}\\
& \leq  C\frac{|Q'|^{s}_+}{|Q|_+^{s}} |Q'|^{t}_+
\end{align*} 
Combining all these cases one  can conclude that 
$$\sum_{\substack{ Q\in \mathcal{H}_+, \ Q\neq Q' \\J\in \mathcal{H}_-, J\neq J' }} |c_{Q,J}|< C_1< \infty,$$
where $C_1$ does not depend on $(Q',J')$.\\ 
\noindent {\it Case 2.  If   $J=J'$ and $Q\neq Q'$. } Choose $x_0\in Q\cap Q'$. Then
$$|\rho(x,y)-\rho(x_0,y)|\leq C d_+(x,x_0)^{t},$$
and we can use an argument similar to Case A. to obtain  
\begin{align*}
&\Big| \int \int \rho \ a^+_{Q'}b^-_{J'}a^+_{Q}b^-_{J'}  \ dm_+dm_-\Big|\\
&\leq  \Big(\int |Q\cap Q'|^t |a^+_{Q'}| |a^+_{Q}|  dm_+ \Big)  | b^-_{J'}|_{L^2(m_-)}^2 
\end{align*} 
and we can use the estimates for the above integral in Case 1.  to conclude that
$$\sum_{Q\in \mathcal{H}_+, \ Q\neq Q' } |c_{Q,J'}|< C_2 < \infty.$$
where $C_2$ does not depend on $(Q',J')$.

\noindent {\it Case 3.  If   $Q=Q'$ and $J\neq J'$. } Use the same argument as in Case 2, exchanging the roles of $J$ and $Q$, to obtain 
$$\sum_{J\in \mathcal{H}_-, \ J\neq J' } |c_{Q',J}|< C_3 < \infty.$$
where $C_3$ does not depends on $(Q',J')$.

\noindent {\it Case 4.  If   $Q=Q'$ and $J=J'$. }  Note that
\begin{align*}\Big| \int \int \rho \ a^+_{Q'}b^-_{J'}a^+_{Q'}b^-_{J'}  \ dm_+dm_-\Big|
&\leq  |\rho|_{L^\infty(m_+\times m_-)} |a^+_{Q'}|_{L^2(m_+)}^2 | b^-_{J'}|_{L^2(m_-)}^2 
\end{align*} 
This concludes the proof. 
\end{proof} 

\section*{Appendix: Proof of Proposition \ref{srbp}} 

The proof of the following lemma is left to the reader

\begin{lemma}\label{srb}   Let $\mu$ be an  Borel probability  on $I^+\times I^-$ such that its marginal measure $\pi_+^\star \mu$ is absolutely continuous with respect to $m_+^{\phi_+}$. Then the Gibbs state $\nu$ is the unique  {\bf forward SRB measure} with respect to $\mu$ and $\gamma(\Omega_{\nu,\mu})=1$.
\end{lemma}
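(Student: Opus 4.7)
The strategy is to reduce Birkhoff convergence on the bilateral system to the classical ergodic theorem for $(\sigma_+,\nu_+^{\phi_+})$, by observing that any continuous test function on $I^+\times I^-$ becomes, after finitely many iterations of $\sigma$, a function depending only on the $I^+$-coordinate. First, $\nu_+^{\phi_+}$ is ergodic with respect to $\sigma_+$ and equivalent to $m_+^{\phi_+}$ by the two-sided bound \eqref{similar}. Since $I^+\times I^-=\mathcal{A}^{\mathbb Z}$ is a totally disconnected compact metric space, $C(I^+\times I^-)$ is separable and admits a countable dense family $\{f_\ell\}_\ell$ of continuous functions, each depending only on finitely many coordinates, say $\{-n_\ell,\dots,n_\ell\}$.

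For such $f_\ell$ and $i\ge n_\ell$, the composition $f_\ell\circ\sigma^i(x,y)$ depends only on non-negative coordinates and so factors as $\tilde f_\ell\circ\sigma_+^{\,i-n_\ell}(x)$ for some continuous $\tilde f_\ell\colon I^+\to\mathbb R$. Applying Birkhoff to $(\sigma_+,\nu_+^{\phi_+})$ yields a set $\tilde\Omega^+\subset I^+$ of full $m_+^{\phi_+}$-measure on which $k^{-1}\sum_{j<k}\tilde f_\ell\circ\sigma_+^j(x)\to\int\tilde f_\ell\,d\nu_+^{\phi_+}$ simultaneously for all $\ell$; the hypothesis $\pi_+^\star\mu\ll m_+^{\phi_+}$ then ensures $\mu(\pi_+^{-1}(\tilde\Omega^+))=1$. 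Using $\sigma$-invariance of $\nu$ together with the marginal identity $\pi_+^\star\nu=\nu_+^{\phi_+}$ established earlier in the paper, one has
\[
\int\tilde f_\ell\,d\nu_+^{\phi_+}=\int f_\ell\circ\sigma^{n_\ell}\,d\nu=\int f_\ell\,d\nu,
\]
and the trivial splitting of $k^{-1}\sum_{i<k}f_\ell\circ\sigma^i(x,y)$ into the negligible initial block $i<n_\ell$ and the tail (an ergodic average for $\sigma_+$ starting at $x$) gives convergence to $\int f_\ell\,d\nu$ for every $(x,y)\in\pi_+^{-1}(\tilde\Omega^+)$ and every $\ell$. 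Density of $\{f_\ell\}$ in $C(I^+\times I^-)$ promotes this to weak-$\star$ convergence of the empirical measures to $\nu$, so $\mu(\Omega^+_{\nu,\mu})=1$.

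Uniqueness follows immediately: any other forward SRB measure $\nu'$ for $\mu$ satisfies $\mu(\Omega^+_{\nu',\mu})>0$, so $\Omega^+_{\nu,\mu}\cap\Omega^+_{\nu',\mu}\ne\emptyset$, and uniqueness of weak-$\star$ limits of the empirical measures forces $\nu'=\nu$. There is no substantial obstacle here; the only mildly delicate point is picking a single null set outside of which Birkhoff convergence holds for the whole countable dense family $\{f_\ell\}$, and observing that functions of finitely many coordinates on $\mathcal{A}^{\mathbb Z}$ are eventually pushed entirely into the positive-index range by $\sigma$, which is what makes the bilateral-to-unilateral reduction work cleanly.
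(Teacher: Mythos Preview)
The paper explicitly leaves the proof of this lemma to the reader, so there is no authors' argument to compare against. Your argument is correct and is the natural one: reduce to Birkhoff's theorem for $(\sigma_+,\nu_+^{\phi_+})$ via the observation that cylinder functions on $\mathcal{A}^{\mathbb Z}$ are pushed into the non-negative coordinates after finitely many forward iterates, use the equivalence $\nu_+^{\phi_+}\sim m_+^{\phi_+}$ together with $\pi_+^\star\mu\ll m_+^{\phi_+}$ to transfer the full-measure set to $I^+\times I^-$, and invoke $\pi_+^\star\nu=\nu_+^{\phi_+}$ and invariance of $\nu$ to identify the limit. The uniqueness argument is also fine. (Note the statement in the paper has a typo: ``$\gamma(\Omega_{\nu,\mu})=1$'' should read ``$\mu(\Omega^+_{\nu,\mu})=1$'', which is what you prove.)
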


\begin{proof}[Proof of Proposition \ref{srbp}.C] 
First we prove  $C.$  for $\psi_+=\phi_+$ and $\psi_-=\phi_-$. Observe that 
\begin{align*} 
&\nu(C_+(x_0,x_1,\dots,x_k)\times C_-(x_{-m},\dots,x_{-1}))\\
&=\nu(\sigma^{-m}(C_+(x_0,x_1,\dots,x_k)\times C_-(x_{-m},\dots,x_{-1})))  \\
&=\nu(C_+(x_{-m},\dots,x_{-1},x_0,x_1,\dots,x_k)\times I^-)\\
&=\nu_+^{\phi_+}(C_+(x_{-m},\dots,x_{-1},x_0,x_1,\dots,x_k)).
\end{align*}
It follows that for every 
$$x\in C_+(x_0,x_1,\dots,x_k)\times C_-(x_{-m},\dots,x_{-1}), $$
we have  
\begin{align*} \sigma^{-m}x&\in C_+(x_{-m},\dots,x_{-1},x_0,x_1,\dots,x_k)\times I^-,\\
\pi_+(x)&\in C_+(x_0,x_1,\dots,x_k), \\
\pi_-(x)&\in C_-(x_{-m},\dots,x_{-1}),
\end{align*}
and since $\phi, \phi_+$ and $\phi_-$ are cohomologous there is a bounded function $u$ such that 
\begin{align*} &\sum_{j=0}^{m+k} \phi_+(\sigma^j(\sigma^{-m}x))= \sum_{j=1}^{m} \phi_+(\sigma^{-j}(x)) + \sum_{j=0}^{k} \phi_+(\sigma^j(x))\\
&=u(\sigma^{-m}x)- u(x)+ \sum_{j=1}^{m} \phi_-(\sigma^{-j}(x)) + \sum_{j=0}^{k} \phi_+(\sigma^j_+(\pi_+(x))) \\
&=u(\sigma^{-m}x)- u(x)+ \sum_{j=1}^{m} \phi_-(\sigma^{j}_-(\pi_-(x))) + \sum_{j=0}^{k} \phi_+(\sigma^j_+(\pi_+(x)))
\end{align*} 
and we conclude that there is $C$, that does not depends on the cylinders, such that 
$$\bigg|\sum_{j=0}^{m+k} \phi_+(\sigma^j(\sigma^{-m}x))-\left( \sum_{r=0}^k \phi_+(\sigma_+^r(\pi_+(x)))+\sum_{\ell=1}^m \phi_-(\sigma_-^\ell(\pi_-(x))) \right)  \bigg|< C.$$
This implies
$$\frac{1}{C}\leq \frac{\nu(C_+(x_0,x_1,\dots,x_k)\times C_-(x_{-m},\dots,x_{-1}))}{\exp( \sum_{r=0}^k \phi_+(\sigma_+^r(\pi_+(x)))+\sum_{\ell=1}^m \phi_-(\sigma_-^\ell(\pi_-(x)))}\leq C$$
So 
\begin{align*}
&\frac{1}{C} m_+^{\phi_+}(C_+(x_0,x_1,\dots,x_k))m_-^{\phi_-}(C_-(x_{-m},\dots,x_{-1}))\\
&\leq \nu(C_+(x_0,x_1,\dots,x_k)\times C_-(x_{-m},\dots,x_{-1})) \\
&\leq C m_+^{\phi_+}(C_+(x_0,x_1,\dots,x_k))m_-^{\phi_-}(C_-(x_{-m},\dots,x_{-1})),
\end{align*}
and consequently $\nu$ is  equivalent to  $m_+^{\phi_+}\times m_-^{\phi_-}$. The ergodicity of $\nu$ implies that $$m_+^{\phi_+}~\times~m_-^{\phi_-}(\Omega_{\nu,m_+^{\phi_+}\times m_-^{\phi_-}}^+)=(\Omega_{\nu,m_+^{\phi_+}\times m_-^{\phi_-}}^-)=1.$$ We can also easily conclude that $\pi_-^*(\nu)=\nu_-^{\phi_-}$.
The general case for $C.$ follows from the observation that 
the  measures $\nu_+^{\psi_+}$, $m_+^{\psi_+}$, $\nu_+^{\phi_+}$ and $m_+^{\phi_+}$ are mutually equivalents, and $\nu_-^{\phi_-}= \nu_-^{\psi_-}$, and $\nu_-^{\psi_-}$, $m_-^{\psi_-}$, $\nu_-^{\phi_-}$ and $m_-^{\phi_-}$ are also mutually equivalents, and $\nu_-^{\phi_-}= \nu_-^{\psi_-}$.
\end{proof}
\begin{proof}[Proof of Proposition \ref{srbp}.B] 
To show $B.$ note that $m^{\psi_+}_+$ and $m^{\phi_+}_+$  are equivalents, consequently  $\pi_+^\star(m)=m^{\psi_+}_+ <<m^{\phi_+}_+$, so Lemma \ref{srb} implies that $\nu$ is a SRB measure for $m$ and $m(\Omega_{\nu,m})=1$. On  other hand we have that $m_-^{\psi_-}$ is mutually singular with respect to $m_-^{\phi_-}$, that implies that $m_+^{\psi_+}\times m_-^{\psi_-}$ is singular with respect to $m=m_+^{\phi_+}\times m_-^{\phi_-}$ and $\nu$. Since
$\nu_-^{\psi_-}\neq \nu_-^{\phi_-}=\pi_-^\star(\nu)$  there is a continuous function $\gamma\colon I^-\rightarrow \mathbb{R}$  such that 
$$\int \gamma \ d\nu_-^{\psi_-}\neq \int \gamma \ d\nu_-^{\phi_-}=\int \gamma\circ \pi_- \ d\nu.$$ 
Since $\nu_-^{\psi_-}$  and $m_-^{\psi_-}$ are equivalents it follows that for $(m_+^{\psi_+}\times m_-^{\psi_-})$-almost every point $x$ we have 
$$\lim_k \frac{1}{k} \sum_{i< k} \gamma(\pi_-(\sigma^{-i}x)) =\int \gamma \ d\nu^{\psi_-}_-,$$
so $\nu$ is not a backward SRB measure with respect to 
$m_+^{\psi_+}\times m_-^{\psi_-}$.

\end{proof}
\begin{proof}[Proof of Proposition \ref{srbp}.A]  We have that $m_+^{\psi_+}$ and $m_+^{\phi_+}$ are mutually singular, consequently   $m_+^{\psi_+}\times m_-^{\psi_-}$ is singular with respect to $m=m_+^{\phi_+}\times m_-^{\phi_-}$. On the other hand $\nu_+^{\psi_+}\neq \nu_+^{\phi_+}$, so, {\it mutatis mutandis}, we can use the same argument as in the proof of Proposition \ref{srbp}.B to conclude  
that  $\nu$ is not a forward SRB measure with respect to 
$m_+^{\psi_+}\times m_-^{\psi_-}$. \end{proof}

%\nocite{*}
\bibliographystyle{plain}
\bibliography{referencias.bib}
%\bibliography{main.bbl}

%\begin{thebibliography}{99}
%\addcontentsline{toc}{chapter}{References}

%\bibitem{Artigo} 
%D. Daigle and G. Freudenburg, \emph{A conterexample to Hilbert's Fourteenth Problem in dimension five}, J. Algebra 221 (1999), 528-535.

%\bibitem{Livro}
%G. Freudenburg, \emph{Algebraic Theory of Locally Nilpotent Derivations}, Encyclopaedia of Mathematical Sciences, Invariant Theory and Algebraic Transformation Groups VII, R. V. Gamkrelidze, V. L. Popov Subseries Editors.

%\end{thebibliography}

\end{document}